\newtheorem{thm}{Theorem}[section]
\newtheorem{cor}[thm]{Corollary}
\newtheorem{lem}[thm]{Lemma}
\newtheorem{prop}[thm]{Proposition}
\theoremstyle{definition}
\newtheorem{defin}[thm]{Definition}
\newtheorem{exa}[thm]{Example}
\newtheorem{rem}[thm]{Remark}
\begin{document}

\title{Local dimension theory of tensor products of algebras over a ring}

\author{Samir Bouchiba \footnote {{\it Email}: bouchibasamir@gmail.com}\\
{\footnotesize Department of Mathematics, Faculty of Sciences,}
{\footnotesize University Moulay Ismail,}\\ {\footnotesize Meknes,
Morocco}}
\date{}

\maketitle


\renewcommand{\thefootnote}{}

\footnote{2010 \emph{Mathematics Subject Classification}: 13C15;
13A15.}

\footnote{\emph{Key words and phrases}: Krull dimension; fibre ring;
AF-ring; Fibred AF-ring; height; Boolean ring.}

\renewcommand{\thefootnote}{\arabic{footnote}}
\setcounter{footnote}{0}

\begin{abstract} Our main goal in this paper is to set the general frame for studying the dimension theory of
tensor products of algebras over an arbitrary ring $R$. Actually, we
translate the theory initiated by A. Grothendieck and R. Sharp and
subsequently developed by A. Wadsworth on Krull dimension of tensor
products of algebras over a field $k$ into the general setting of
algebras over an arbitrary ring $R$. For this sake, we introduce and
study the notion of a fibred AF-ring over a ring $R$. This concept
extends naturally the notion of AF-ring over a field introduced by
A. Wadsworth in \cite{W} to algebras over arbitrary rings. We prove
that Wadsworth theorems express local properties related to the
fibre rings of tensor products of algebras over a ring. Also, given
a triplet of rings $(R,A,B)$ consisting of two $R$-algebras $A$ and
$B$ such that $A\otimes_RB\neq \{0\}$, we introduce the inherent
notion to $(R,A,B)$ of a $B$-fibred AF-ring which allows to compute
the Krull dimension of all fiber rings of the considered tensor
product $A\otimes_RB$. As an application, we provide a formula for
the Krull dimension of $A\otimes_RB$ when $A$ and $B$ are
$R$-algebras with $A$ is zero-dimensional as well as for the Krull
dimension of $A\otimes_{\mathbb{Z}}B$ when $A$ is a fibred AF-ring
over the ring of integers $\mathbb{Z}$ with nonzero characteristic
and $B$ is an arbitrary ring. This enables us to answer a question
of Jorge Matinez on evaluating the Krull dimension of
$A\otimes_{\mathbb{Z}}B$ when $A$ is a Boolean ring. Actually, we
prove that if $A$ and $B$ are rings such that
$A\otimes_{\mathbb{Z}}B$ is not trivial and $A$ is a Boolean ring,
then dim$(A\otimes_{\mathbb{Z}}B)=\mbox {dim}\Big (\displaystyle
{\frac B{2B}}\Big )$.\end{abstract}

\noindent {\footnotesize {\it MSC} (2000): 13D02; 13D05; 13D07; 16E05; 16E10.}\\

\section{Introduction}
All rings considered in this paper are commutative with identity
element and all ring homomorphisms are unital. Here and
subsequently, $R$ stands for an arbitrary ring and $k$ stands for a
field. Let $A$ be a ring and $p$ be a prime ideal of $A$. Then
Spec($A)$ denotes the set of all prime ideals of $A$ and $k_A(p)$
denotes the quotient field of $\displaystyle {\frac Ap}$. Also, if
$n\geq 0$ is a positive integer, $A[n]$ denotes the polynomial ring
in $n$ indeterminates $A[X_1,X_2,\cdots,X_n]$ and $ht(p[n])$ stands
for the height of the extended ideal $p[n]:=p[X_1,X_2,\cdots,X_n]$
of $p$. Further, if $A$ is an algebra over a field $k$, then
t.d.$(A:k)$ denotes the transcendence degree of $A$ over $k$.
 Any
unreferenced material is standard as in \cite{M}, \cite{Bo} and
\cite{ZS}.

It is a paper of R. Sharp on Krull dimension of tensor products of
two field extensions of a field $k$ which gave the initial impetus
to study the Krull dimension of tensor products. Actually, in
\cite{S}, Sharp proved that, for any two extension fields $K$ and
$L$ of $k$, dim$(K\otimes_kL)=$ min(t.d.($K:k),$ t.d.($L:k))$
(actually, this result appeared ten years earlier in Grothendieck's
EGA \cite[Remarque 4.2.1.4, p. 349]{EGA4}). This formula is rather
surprising since, as one may expect, the structure of the tensor
product should reflect the way the two components interact and not
only the structure of each component. This fact affords motivation
to Wadsworth to work on this subject in \cite{W}. He aimed at
seeking geometric properties of primes of $A\otimes_kB$ and to widen
the scope of $k$-algebras $A$ and $B$ whose tensor product Krull
dimension, dim$(A\otimes_kB)$, shows exclusive dependence on
individual characteristics of $A$ and $B$. The algebras which proved
to be tractable for Krull dimension computations turned out to be
those rings $A$ which satisfy the altitude formula over $k$
(AF-rings for short), that is,
$$ht(p)+\mbox {t.d.}\Big (\displaystyle {\frac Ap}:k\Big )=\mbox { t.d.}(A_p:k)$$ for all prime
ideals $p$ of $A$. The class of AF-rings contains the most basic
rings of algebraic geometry, including finitely generated
$k$-algebras. Wadsworth proved through \cite[Theorem 3.7]{W} that if
$A$ is an AF-domain and $B$ is any $k$-algebra, then
$$\mbox {dim}(A\otimes_kB)=\mbox { max}\Big \{ht(q[\mbox
{t.d.}(A:k)])+\mbox {min}\Big (\mbox {t.d.}(A:k),ht(p)+\mbox
{t.d.}\Big (\displaystyle {\frac Bq}:k\Big )\Big ):$$ $$p\in\mbox {
Spec}(A)\mbox { and }q\in\mbox { Spec}(B)\Big \}.$$ As a consequence
of this, \cite[Theorem 3.8]{W} states that if $A_1$ and $A_2$ are
both AF-domains, then
$$\mbox {dim}(A_1\otimes_kA_2)=\mbox { min}\Big {(}\mbox
{dim}(A_1)+\mbox {t.d.}(A_2:k),\mbox { t.d.}(A_1:k)+\mbox
{dim}(A_2)\Big {)}.$$ Further, he gave a result which yields a
classification of the prime ideals of $A\otimes_kB$ according to
their contractions to $A$ and $B$ (cf. \cite[Proposition 2.3]{W}).
In \cite{BGK1}, we continued the work of Wadsworth and transferred
all his theorems in \cite{W} on AF-domains to AF-rings. This passage
from domains to rings with zero-divisors is well reflected in new
formulas for the Krull dimension of tensor products involving
AF-rings. As it turns out from the present work, it is these
formulas that are relevant in our treatment of the general setting
of tensor products over a ring $R$. We refer the reader to
\cite{BGK1,BGK2,BDK,BK,B1,B2,S,S1,W} for basics and recent
investigations on the dimension theory of tensor products of
algebras over a field.

The main goal of this paper is to set the general frame to study the
dimension theory of tensor products of algebras over an arbitrary
ring $R$. Actually, we translate the theory initiated by A.
Grothendieck and R. Sharp and subsequently developed by A. Wadsworth
on Krull dimension of tensor products of algebras over a field $k$
into the general setting of algebras over an arbitrary ring $R$. It
turns out that Wadsworth theorem express local properties related to
the fibre rings of the tensor products over an arbitrary ring $R$.
For this sake, we
 introduce and study the notion of
a fibred AF-ring over a ring $R$. Actually, we say that an
$R$-algebra $A$ is a fibred AF-ring over $R$ if the fibre ring
$k_R(p)\otimes_RA$ is an AF-ring over $k_R(p)$ for any prime ideal
$p$ of $R$ such that $k_R(p)\otimes_RA\neq \{0\}$. When restricted
to tensor products over a field the notion of a fibred AF-ring boils
down to the classical one of an AF-ring. It is notable that all
finitely generated algebras over $R$ proved to be fibred AF-rings as
well as all zero-dimensional rings which are $R$-algebras. We prove
that the fibred AF-rings inherit all properties of Wadsworth
introduced AF-rings. Moreover, given a triplet of rings $(R,A,B)$
consisting of $R$-algebras $A$ and $B$ such that $A\otimes_RB\neq
\{0\}$, we introduce and study the notion of a $B$-fibred AF-ring
over $R$ which is a somewhat inherent concept to the given triplet
$(R,A,B)$. So, when $A$ is a $B$-fibred AF-ring over $R$, we can
explicit the Krull dimension of all fiber rings of $A\otimes_RB$ and
in various cases this enables us to determine dim$(A\otimes_RB)$. As
an application, we compute the Krull dimension of $A\otimes_RB$ when
$A$ and $B$ are $R$-algebras with $A$ is zero-dimensional. Also, we
provide a formula for the Krull dimension of $A\otimes_RB$ when $A$
and $B$ are $R$-algebras with $A$ is zero-dimensional as well as for
the Krull dimension of $A\otimes_{\mathbb{Z}}B$ when $A$ is a fibred
AF-ring over the ring of integers $\mathbb{Z}$ with nonzero
characteristic and $B$ is an arbitrary ring. This allows us to
answer a question of Jorge Matinez on evaluating the Krull dimension
of $A\otimes_{\mathbb{Z}}B$ when $A$ is a Boolean ring. Actually, we
prove that if $A$ and $B$ are rings such that
$A\otimes_{\mathbb{Z}}B$ is not trivial and $A$ is a Boolean ring,
then dim$(A\otimes_{\mathbb{Z}}B)=\mbox {dim}\Big
(\displaystyle {\frac B{2B}}\Big )$.\\

\noindent {\bf Acknowledgement}\\
I would like to thank Professor Jeorge Martinez for the discussion
led with him on the possible connections between the dimension
theory of tensor products of algebras and the dimension theory of
frames. His questions on this issue were the source of motivation to
write this paper.

\section{Local spectrum and effective spectrum}

This section introduces the effective spectrum of a ring $R$ with
respect to an $R$-algebra as well as local notions of well known
concepts of dimension theory of rings such that the height of a
prime ideal, the Krull dimension and the spectrum of a ring.

Let $R$ be an arbitrary ring and let $A$ be an $R$-algebra. Denote
by $f_A:R\longrightarrow A$, with $f_A(r)=r.1_A$ for any $r\in R$
and where $1_A$ is the unit element of $A$, the ring homomorphism
defining the structure of algebra of $A$ over $R$. Let $K:=$
Ker$(f_A)$. It is easily seen that for each prime ideal $P$ of $A$,
$K\subseteq p:=f_A^{-1}(P)$ and that the induced homomorphisms
$\overline {f_A}:\displaystyle {\frac RK}\longrightarrow A$ and
$\widetilde {f_A}:\displaystyle {\frac Rp\longrightarrow \frac AP},$
defined by $\overline {f_A}(\overline r)=f_A(r)$ and $\widetilde
f_A(\tilde r)=\widetilde {f_A(r)}$ for each $r\in R$, are injective.
Let $S$ be a multiplicative subset of $R$. Recall that the
localization of $A$ by $S$ is the $S^{-1}R$-algebra
$S^{-1}A:=S^{-1}R\otimes_RA$. Our first result proves that $S^{-1}A$
is isomorphic to a localization of $A$ by a multiplicative subset
$\overline S$ of $A$. By virtue of this lemma, we deduce that, for
any multiplicative subset $S$ of $R$, $S^{-1}A$ enjoys all
properties satisfied by the well known localization by a
multiplicative subset of $A$.

\begin{lem} Let $R$ be a ring and $A$ an $R$-algebra. Let $S$ be a
multiplicative subset of $R$. Let $\overline S:=f_A(S)$ denote the
the corresponding multiplicative subset of $A$. Then the natural map
$\varphi:S^{-1}A\longrightarrow \overline S^{-1}A$ such that
$\displaystyle {\varphi\Big (\sum\limits_{i\in\Lambda}\frac
{r_i}{s_i}\otimes_Ra_i\Big )=\sum\limits_{i\in \Lambda}\frac
{r_ia_i}{f_A(s_i)}}$, for any finite set $\Lambda$, any $\{r_i:i\in
\Lambda\}\subseteq R$, $\{s_i:i\in \Lambda\}\subseteq S$ and
$\{a_i:i\in\Lambda\}\subseteq A$, is an isomorphism of
$R$-algebras.\end{lem}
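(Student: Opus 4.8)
The plan is to realize $\varphi$ as the map induced by a well-chosen $R$-balanced map and then to exhibit an explicit two-sided inverse; this reduces the whole statement to routine (if slightly delicate) verifications with the defining relations of the two localizations. Throughout I write $ra$ for $f_A(r)a$ ($r\in R$, $a\in A$), and I first record that $\overline S=f_A(S)$ is indeed a multiplicative subset of $A$, since $f_A$ is a unital ring homomorphism.

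First I would construct $\varphi$ cleanly. Consider the map $\psi\colon S^{-1}R\times A\to \overline S^{-1}A$ given by $\psi(\frac rs,a)=\frac{f_A(r)a}{f_A(s)}$. I would check that $\psi$ is well defined in its first argument: if $\frac rs=\frac{r'}{s'}$ in $S^{-1}R$, there is $t\in S$ with $t(rs'-r's)=0$ in $R$, and applying $f_A$ and multiplying by $a$ shows $f_A(t)\big(f_A(r)a\,f_A(s')-f_A(r')a\,f_A(s)\big)=0$ with $f_A(t)\in\overline S$, which is exactly the relation making $\frac{f_A(r)a}{f_A(s)}=\frac{f_A(r')a}{f_A(s')}$. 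A direct computation then shows $\psi$ is additive in each variable and $R$-balanced, i.e. $\psi(\frac rs\cdot r',a)=\psi(\frac rs,r'a)$; by the universal property of the tensor product it factors through a unique additive map $\varphi\colon S^{-1}R\otimes_RA\to\overline S^{-1}A$ with $\varphi(\frac rs\otimes a)=\frac{f_A(r)a}{f_A(s)}$, which on simple tensors and on the identity is readily seen to be multiplicative and $R$-linear, hence an $R$-algebra homomorphism.

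Next I would produce the inverse $\eta\colon \overline S^{-1}A\to S^{-1}R\otimes_RA$. Every element of $\overline S^{-1}A$ has the form $\frac a{f_A(s)}$ with $a\in A$, $s\in S$, and I set $\eta\big(\frac a{f_A(s)}\big)=\frac1s\otimes a$. The only real work is the well-definedness of $\eta$, and this is where I expect the main obstacle to lie, since it forces one to manipulate the tensor relations rather than the localization relations. Suppose $\frac a{f_A(s)}=\frac{a'}{f_A(s')}$; then $f_A(t)\big(a\,f_A(s')-a'\,f_A(s)\big)=0$ for some $t\in S$, i.e. $f_A(ts')a=f_A(ts)a'$ in $A$. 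Using the identity $\frac ru\otimes a=\frac1u\otimes f_A(r)a$ in $S^{-1}R\otimes_RA$ together with the amplification $\frac1s\otimes a=\frac1{ss't}\otimes f_A(s't)a$, I would rewrite $\frac1s\otimes a=\frac1{ss't}\otimes f_A(ts')a$ and $\frac1{s'}\otimes a'=\frac1{ss't}\otimes f_A(ts)a'$; the equality $f_A(ts')a=f_A(ts)a'$ then gives $\frac1s\otimes a=\frac1{s'}\otimes a'$, so $\eta$ is well defined. That $\eta$ is additive (indeed a ring homomorphism) is immediate from the construction.

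Finally I would check that $\varphi$ and $\eta$ are mutually inverse: on a simple tensor, $\eta\varphi(\frac rs\otimes a)=\eta\big(\frac{f_A(r)a}{f_A(s)}\big)=\frac1s\otimes f_A(r)a=\frac rs\otimes a$, and $\varphi\eta\big(\frac a{f_A(s)}\big)=\varphi(\frac1s\otimes a)=\frac a{f_A(s)}$, and both identities extend to arbitrary elements by additivity. Hence $\varphi$ is a bijective $R$-algebra homomorphism, that is, an isomorphism of $R$-algebras, as claimed.
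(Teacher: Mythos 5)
Your proof is correct and takes essentially the same route as the paper's: both obtain $\varphi$ from the $R$-biadditive map $\left(\frac{r}{s},a\right)\mapsto \frac{f_A(r)a}{f_A(s)}$ via the universal property of the tensor product, and both exhibit the explicit inverse $\frac{a}{f_A(s)}\mapsto \frac{1}{s}\otimes_R a$ and check that the two composites are identities. The only difference is level of detail: you write out the well-definedness verifications (notably for the inverse, via the amplification $\frac{1}{s}\otimes a=\frac{1}{ss't}\otimes f_A(s't)a$) that the paper dismisses as routine.
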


\begin{proof} It is easy to see that the mapping $f:S^{-1}R\times
A\longrightarrow \overline S^{-1}A$ such that $f\Big (\displaystyle
{\frac rs,a\Big )=\frac {ra}{f_A(s)}}$ is well defined and is
$R$-biadditive. This yields the existence of the assigned
homomorphism $\varphi$ of $R$-algebras. Also, it is routine to check
that the map $g:\overline S^{-1}A\longrightarrow S^{-1}R\otimes_RA$
defined by $g\Big (\displaystyle {\frac a{f_A(s)}\Big ):=g\Big
(\frac a{s.1_A}\Big )=\frac 1s\otimes_Ra}$ for each $a\in A$ and
each $s\in S$ is an homomorphism of $R$-algebras. Then observe that
$\varphi\circ g=$ id$_{\overline S^{-1}A}$ and $g\circ\varphi =$
id$_{S^{-1}A}$. Hence $\varphi$ is an isomorphism of $R$-algebras.
\end{proof}

The above discussion allows us to announce the following lemma which
collects certain properties and facts about fibre rings. These
properties were stated in \cite[page 84]{MS} in the Noetherian
setting but in fact they hold in the general case.\\

\begin{lem} Let $R$ be a ring and let $A$ be an $R$-algebra. Let
$p$ be a prime ideal of $R$. Let $
S_p:=\displaystyle {\frac Rp}\setminus \{\overline 0\}$. Then\\
1) $\displaystyle {k_R(p)\otimes_RA\cong S_p^{ -1}\frac A{pA}:=S_p^{
-1}\frac A{f_A(p)A}}$ and $\displaystyle {k_R(p)\otimes_RP\cong
S_p^{-1}\frac P{pA}}$ for each prime ideal $P$ of $A$ such that $f_A^{-1}(P)=p$.\\
2) Spec$(k_R(p)\otimes_RA)=\{k_R(p)\otimes_RP:P\in$ Spec$(A)$ such that $f_A^{-1}(P)=p\}$.\\
3) There exists an order-preserving bijective correspondence between
the spectrum of $k_R(p)\otimes_RA$ and the set of prime ideals of
$A$ which contract to $p$ over $R$.\\
4) Let $P$ be a prime ideal of $A$ and let $p:=f_A^{-1}(P)$. Then
$$(k_R(p)\otimes_RA)_{k_R(p)\otimes_RP}\cong k_R(p)\otimes_RA_P.$$
5) Let $P$ be a prime ideal of $A$ and let $p:=f_A^{-1}(P)$. Then
$$\displaystyle {\frac {k_R(p)\otimes_RA}{k_R(p)\otimes_RP}\cong
k_R(p)\otimes_R\frac AP}.$$
\end{lem}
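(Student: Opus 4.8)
The plan is to derive all five assertions from the single structural identification in part 1, namely that the fibre ring $k_R(p)\otimes_RA$ is a localization of a quotient of $A$. Throughout I write $S:=R\setminus p$, so that $k_R(p)=S^{-1}R/pS^{-1}R=S_p^{-1}(R/p)$, and I regard the passage from $A$ to $k_R(p)\otimes_RA$ as the composite of two base changes: first along the quotient map $R\to R/p$, then along the localization $R/p\to S_p^{-1}(R/p)=k_R(p)$. Once part 1 is in hand, parts 2 and 3 are a translation of the standard bijections governing the spectrum of a quotient and of a localization, while parts 4 and 5 follow by computing a further localization, respectively a further quotient, of the ring produced in part 1.

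First I would prove part 1. The identity $(R/p)\otimes_RA\cong A/f_A(p)A=A/pA$ is the standard isomorphism $(R/I)\otimes_RM\cong M/IM$. Tensoring once more and using associativity gives $k_R(p)\otimes_RA\cong S_p^{-1}(R/p)\otimes_{R/p}\big((R/p)\otimes_RA\big)\cong S_p^{-1}(A/pA)$, where the concrete realisation of the localization $S_p^{-1}$ of the $R/p$-algebra $A/pA$ is exactly what Lemma 2.1 supplies (it identifies it with $\overline{S_p}^{-1}(A/pA)$, $\overline{S_p}$ the image of $S_p$ in $A/pA$). For the second isomorphism, let $P$ be a prime of $A$ with $f_A^{-1}(P)=p$; then $f_A(p)\subseteq P$, so $pA\subseteq P$ and the image of $P$ in $A/pA$ is the genuine ideal $P/pA$. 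Running the same two base changes on the $R$-module $P$ and tracking the canonical map $k_R(p)\otimes_RP\to k_R(p)\otimes_RA$ identifies the image of $k_R(p)\otimes_RP$ with the extended ideal $S_p^{-1}(P/pA)$ of $S_p^{-1}(A/pA)$, which is the content of the claim.

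Next I would read off parts 2 and 3. Under the identification $k_R(p)\otimes_RA\cong S_p^{-1}(A/pA)$, the primes of the fibre ring correspond to the primes $\mathfrak q$ of $A/pA$ with $\mathfrak q\cap\overline{S_p}=\emptyset$, hence to the primes $P$ of $A$ that contain $f_A(p)A$ and miss $f_A(S)$. The first condition reads $p\subseteq f_A^{-1}(P)$ and the second $f_A^{-1}(P)\subseteq p$, so together they say precisely $f_A^{-1}(P)=p$; by the second isomorphism of part 1 the prime attached to such a $P$ is $k_R(p)\otimes_RP$, giving part 2. Since both the quotient bijection and the localization bijection are inclusion-preserving, so is their composite $P\mapsto k_R(p)\otimes_RP$, which yields the order-isomorphism of part 3.

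Finally I would settle parts 4 and 5 by one further operation on $S_p^{-1}(A/pA)$. For part 4, localizing $S_p^{-1}(A/pA)$ at the prime $S_p^{-1}(P/pA)$ collapses the intermediate localization and leaves $(A/pA)_{P/pA}\cong A_P/pA_P$; on the other side, part 1 applied to the $R$-algebra $A_P$ gives $k_R(p)\otimes_RA_P\cong S_p^{-1}(A_P/pA_P)$, and since every $s\in R\setminus p$ has $f_A(s)\notin P$ its image is already a unit in $A_P/pA_P$, so the localization $S_p^{-1}$ is vacuous and both sides equal $A_P/pA_P$. For part 5 the third isomorphism theorem gives $S_p^{-1}(A/pA)\big/S_p^{-1}(P/pA)\cong S_p^{-1}(A/P)$, while part 1 applied to $A/P$ gives $k_R(p)\otimes_R(A/P)\cong S_p^{-1}\big((A/P)/p(A/P)\big)=S_p^{-1}(A/P)$ because $f_A(p)\subseteq P$ forces $p(A/P)=0$. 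The step I expect to be the genuine obstacle is the second isomorphism in part 1: the canonical map $k_R(p)\otimes_RP\to k_R(p)\otimes_RA$ need not be injective, so the assertion has to be read as an equality of the \emph{image} ideal with $S_p^{-1}(P/pA)$, and care is needed to verify that $f_A(p)S^{-1}A\subseteq S^{-1}P$ is exactly the submodule killed when passing to the quotient $A/pA$. All remaining manipulations are routine applications of exactness of localization, base change, and Lemma 2.1.
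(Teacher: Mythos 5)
Your proof is correct, and its skeleton matches the paper's: everything is funneled through the identification $k_R(p)\otimes_RA\cong S_p^{-1}(A/pA)$ of part (1), with the paper's Lemma 2.1 playing exactly the role you assign it, and your argument for (5) is verbatim the paper's chain $S_p^{-1}(A/pA)/S_p^{-1}(P/pA)\cong S_p^{-1}(A/P)\cong k_R(p)\otimes_R(A/P)$, the last step because $p$ annihilates $A/P$. There are two genuine points of divergence worth recording. First, the paper does not prove (1)--(3) at all: it cites MacLane--Schilling \cite[p.~84]{MS} and only writes out (4) and (5), whereas you derive (1)--(3) from the standard quotient and localization bijections, making the lemma self-contained; in doing so you also flag a subtlety the paper leaves implicit, namely that $k_R(p)$ is not flat over $R$, so the canonical map $k_R(p)\otimes_RP\to k_R(p)\otimes_RA$ can fail to be injective (take $R=A=\mathbb{Z}$ and $P=p=2\mathbb{Z}$: then $k_R(p)\otimes_RP\cong\mathbb{F}_2$ while $S_p^{-1}(P/pA)=0$), so the second isomorphism of (1) --- and the notation $k_R(p)\otimes_RP$ in (2)--(5) --- must indeed be read as the extended (image) ideal, exactly as you insist. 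Second, in (4) both proofs reach $(k_R(p)\otimes_RA)_{k_R(p)\otimes_RP}\cong A_P/pA_P\cong (R/p)\otimes_RA_P$ by the same collapse of the intermediate localization, but then part company: the paper upgrades $(R/p)\otimes_RA_P$ to $k_R(p)\otimes_RA_P$ by computing $T^{-1}R\otimes_R(R/p)\otimes_RA_P$ (with $T=R\setminus p$) in two ways, using that $f_A(T)\subseteq A\setminus P$ consists of units of $A_P$; you instead apply (1) to the $R$-algebra $A_P$ and observe directly that the localization $S_p^{-1}$ is vacuous on $A_P/pA_P$ for the same reason. Your variant is shorter and isolates the one fact doing the work (the image of $R\setminus p$ is already inverted in $A_P$), while the paper's double-base-change computation obtains the same conclusion without invoking (1) for the auxiliary algebra $A_P$; both are sound.
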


\begin{proof} In view of \cite[p. 84]{MS}, it remains to give a proof of (4) and (5).\\
4) Let $P$ be a prime ideal of $A$ and $p:=f_A^{-1}(P)$. Consider
the multiplicative subset $T:=R\setminus p$ of $R$. Then, by (1),
$$\begin{array}{lll}(k_R(p)\otimes_RA)_{k_R(p)\otimes_RP}&\cong&\displaystyle {\Big (S_p^{ -1}\frac
A{pA}\Big )_{S_p^{ -1}\frac P{pA}}}\\
&\cong&\displaystyle {\Big (\frac A{pA}\Big )_{\frac P{pA}}}\\
&\cong&\displaystyle {\frac {A_P}{pA_P}}\\
&\cong& \displaystyle {\frac Rp}\otimes_RA_P.\end{array}$$ Also,
notice that, on the one hand,
$$\begin{array}{lll}T^{-1}R\otimes_R\displaystyle {\frac
Rp}\otimes_RA_P&\cong&T^{-1}\displaystyle {\frac Rp}\otimes_RA_P\\
&=&S_p^{-1}\displaystyle {\frac Rp}\otimes_RA_P\\
&=&k_R(p)\otimes_RA_P\end{array}$$ and, on the other,
$$\begin{array}{lll}T^{-1}R\otimes_R\displaystyle {\frac
Rp}\otimes_RA_P&\cong&\displaystyle {\frac
Rp}\otimes_R(T^{-1}R\otimes_RA_P)\\
&\cong&\displaystyle {\frac Rp}\otimes_RT^{-1}A_P\\
&=&\displaystyle {\frac Rp}\otimes_RA_P \mbox { as } f_A(T)\subseteq
A\setminus P \mbox { and thus each element of }\\
&&f_A(T)\mbox { is invertible in }A_P.\end{array}$$ It follows that
$(k_R(p)\otimes_RA)_{k_R(p)\otimes_RP}\cong k_R(p)\otimes_RA_P$, as
desired.\\
5) Let $P$ be a prime ideal of $A$ and $p:=f_A^{-1}(P)$. Then, by
(1),$$\begin{array}{lll}\displaystyle {\frac
{k_R(p)\otimes_RA}{k_R(p)\otimes_RP}}&\cong&\displaystyle {\frac
{S_p^{
-1}(A/pA)}{S_p^{ -1}(P/pA)}}\\
&\cong&S_p^{-1}\displaystyle {\frac AP}\\
&\cong&k_R(p)\otimes_R\displaystyle {\frac AP} \mbox { as }p\mbox {
}\displaystyle {\frac AP}=(\overline 0).\end{array}$$ This completes
the proof.\end{proof}

Given an $R$-algebra $A$, it is to be noted that not all the prime
ideals of $R$ are essential in capturing the prime ideal structure
of $A$ over $R$. Actually, there are prime ideals and chains of
prime ideals of $R$ that have no effect on the structure of the
spectrum of $A$ (see Example 2.5). That is the reason why we
introduce in what follows the notion of effective spectrum of $R$
with respect to $A$ and effective Krull dimension of $R$ with
respect to $A$.

\begin{defin} {\it Let $A$ be an $R$-algebra.}\\
1) {\it A prime ideal $p$ of $R$ is said to be an effective prime
ideal of $R$ with respect to $A$ if the fibre ring
$k_R(p)\otimes_RA\neq
\{0\}$.}\\
2) {\it We define the effective spectrum of $R$ with respect to $A$
to be the set denoted by Spec$_{e}^A(R)$ consisting of all effective
prime ideals $p$ of $R$ with respect to $A$, namely,
$$\mbox { Spec}_e^A(R)=\{p\in\mbox { Spec}(R):k_R(p)\otimes_RA\neq
\{0\}\}.$$  Also, we denote by Max$_e^A(R)$ the subset of maximal
elements of Spec$_e^A(R)$, that is, the set of maximal effective
prime
ideals of $R$ with respect to $A$.}\\
3) {\it Let $p\in$ Spec$_e^A(R)$. We define the effective height of
$p$ with respect to $A$, denoted by $ht_e^A(p)$, to be the supremum
of lengths of chains of effective prime ideals
$p_0\subset p_1\subset\cdots\subset p_n=p$ of $R$ terminating at $p$.}\\
4) {\it We define the effective Krull dimension of $R$ with respect
to $A$ to be the invariant denoted by $\mbox {dim}_e^A(R)$ which is
the supremum of effective heights of effective prime ideals of $R$
with respect to $A$, that is,}
$$\mbox {dim}_e^A(R)=\mbox {sup}\{ht_e^A(p):p\in\mbox {Spec}_e^A(R)\}.$$.\end{defin}

The following result determines the effective spectrum of a ring $R$
with respect to various constructions. Its proof is easy and thus
omitted.

\begin{prop} 1) Let $A$ be an $R$-algebra. Then Spec$_e^A(R)=f_A^{-1}($Spec$(A))$\\
2) Let $X_1,X_2,\cdots,X_n$ be indeterminates over $R$. Then
Spec$_e^{R[X_1,X_2,\cdots,X_n]}(R)=$ Spec$(R)$.\\
3) Let $R\hookrightarrow A$ be an integral extension of rings. Then
Spec$_e^A(R)=$ Spec$(R)$.\\
4) Let $S$ be a multiplicative subset of $R$. Then
Spec$_e^{S^{-1}R}(R)=\{p\in$ Spec$(R):p\cap S=\emptyset\}$.\\
5) Let $A$ be an $R$-algebra and $S$ be a multiplicative subset of
$A$. Then $$\mbox {Spec}_e^{S^{-1}A}(R)=f_A^{-1}\Big (\{P\in\mbox {
Spec}(A):P\cap S=\emptyset\}\Big )=f_A^{-1}(\mbox
{Spec}_e^{S^{-1}A}(A))\subseteq\mbox { Spec}_e^{A}(R).$$ 6) Let $I$
be an ideal of $R$. Then Spec$_e^{\frac RI}(R)=\{p\in$
Spec$(R):I\subseteq p\}$.\\\end{prop}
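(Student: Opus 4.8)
The entire proposition reduces to part (1), so the plan is to prove that first and then read off the other five items from it. By definition, $p \in \text{Spec}_e^A(R)$ means $k_R(p) \otimes_R A \neq \{0\}$. By Lemma 2.2(2) (equivalently (3)), the spectrum of the fibre ring $k_R(p) \otimes_R A$ is in bijection with the set of prime ideals $P$ of $A$ satisfying $f_A^{-1}(P) = p$; in particular the fibre ring is nonzero exactly when at least one such $P$ exists. Hence $\text{Spec}_e^A(R)$ coincides with the image of the contraction map $\text{Spec}(A) \to \text{Spec}(R)$, $P \mapsto f_A^{-1}(P)$, which is precisely the set denoted $f_A^{-1}(\text{Spec}(A))$. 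This establishes (1), and every subsequent item is then obtained by computing this image for the specific algebra at hand.

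For (2) and (3) I would invoke surjectivity of the contraction map. When $A = R[X_1,\ldots,X_n]$, each prime $p$ of $R$ is the contraction of its extension $p[X_1,\ldots,X_n]$, so the contraction map is onto and its image is all of $\text{Spec}(R)$; by (1) this yields $\text{Spec}_e^{R[X_1,\ldots,X_n]}(R) = \text{Spec}(R)$. When $R \hookrightarrow A$ is integral, the Lying Over theorem guarantees that every prime of $R$ lies under some prime of $A$, so again the image is $\text{Spec}(R)$ and (3) follows from (1).

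For (4) and (6) I would use the classical prime correspondences for localizations and quotients. The structure map of $S^{-1}R$ is the localization map, whose primes are exactly those $p$ with $p \cap S = \emptyset$, each contracting to itself; the structure map of $R/I$ is the quotient map, whose primes contract to the primes $p \supseteq I$. Feeding these images into (1) gives both equalities. For (5), the structure map factors as $R \xrightarrow{f_A} A \to S^{-1}A$, and a prime $P S^{-1}A$ of $S^{-1}A$ (with $P \cap S = \emptyset$) contracts to $f_A^{-1}(P)$; thus the image over $R$ is $f_A^{-1}(\{P \in \text{Spec}(A) : P \cap S = \emptyset\})$. Applying (4) with the base ring $A$ in place of $R$ identifies the inner set with $\text{Spec}_e^{S^{-1}A}(A)$, and the inclusion $\subseteq \text{Spec}_e^A(R)$ is immediate from (1), since $\{P : P \cap S = \emptyset\}$ is a subset of $\text{Spec}(A)$.

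The arguments are all routine; the only step demanding genuine care is the equivalence in (1) between nonvanishing of the fibre ring and the existence of a contracting prime, which is exactly what Lemma 2.2 supplies. A secondary point is to track the factorization of the structure map correctly in (5), so that the contraction is computed as $f_A^{-1} \circ \ell_S^{-1}$, where $\ell_S : A \to S^{-1}A$ is the localization map, rather than conflating the two base rings $R$ and $A$.
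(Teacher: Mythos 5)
Your proof is correct; note that the paper itself omits the proof of this proposition, stating only that it is ``easy,'' and your argument supplies exactly the intended routine verification. Reducing all six items to part (1) --- where the key equivalence ``$k_R(p)\otimes_RA\neq\{0\}$ iff some prime of $A$ contracts to $p$'' follows from Lemma 2.2(2)--(3) together with the fact that a nonzero ring has a prime ideal --- and then invoking the standard correspondences (extension $p[X_1,\ldots,X_n]$ for polynomial rings, Lying Over for integral extensions, and the prime correspondences for localizations and quotients, with the factorization $R\xrightarrow{f_A}A\to S^{-1}A$ handled correctly in (5)) is precisely the argument the author expects the reader to carry out.
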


Next, for any positive integer $n\geq 2$, we exhibit an example of a
ring $R$ and an $R$-algebra $A$ such that there exists a chain of
distinct prime ideals $p_0\subset p_1\subset\cdots\subset p_n$ in
$R$ with both ends $p_0,p_n\in$ Spec$_e^A(R)$ while the intermediate
elements $p_2,\cdots,p_{n-1}\not\in$ Spec$_e^A(R)$. It would be
interesting to afford a ring $T$
issued from $R$ and $A$ such that Spec$(T)=$ Spec$_e^A(R)$. This task is not easy and it turns out from the next example
that $T$ is neither a localization of $R$ nor a factor ring of $R$. \\

\begin{exa} Let $k$ be a field and $t$ be an
indeterminate over $k$. It is known, by \cite[Lemma 1]{MS}, that
there exists an infinite number of formal power series
$g_1(t),g_2(t),\cdots,$ $g_m(t),\cdots$ of $k[[t]]$ which are
algebraically independent over $k$. In fact, we can choose the $g_i$
in the maximal ideal $tk[[t]]$. Actually, assume that
$g_1(t)=1+a_1t+a_2t^2+\cdots+a_mt^m+\cdots\in k[[t]]\setminus
tk[[t]]$. Then, observe that $h_1(t):=g_1(t)-1\in tk[[t]]$ and, for
each integer $i\geq 2$, $h_i(t):=g_i(t)-g_i(0)g_1(t)\in tk[[t]]$,
and the formal power series $h_1(t),h_2(t),\cdots,h_m(t),\cdots$ are
algebraically independent over $k$ as
$k[h_1(t),h_{i_1}(t),h_{i_2}(t),\cdots,h_{i_n}(t)]=k[g_1(t),g_{i_1}(t),g_{i_2}(t),\cdots,g_{i_n}(t)]$
is a polynomial ring in $n+1$ indeterminates for any finite subset
$\{i_1,i_2,\cdots,i_n\}$ of $\mathbb{N}\setminus\{0\}$. Therefore
let $g_1(t),g_2(t),\cdots,g_m(t),\cdots\in tk[[t]]$ be algebraically
independent elements over $k$. Let $n\geq 2$ be an integer and
$X_1,X_2,\cdots,X_n$ be indeterminates over $k$. Let
$R:=k[X_1,X_2,\cdots,X_n]$ and $A:=k[[t]]$ be the formal power
series ring over $k$ which is a rank one discrete valuation ring and
thus Spec$(A)=\{(0),tk[[t]]\}$. We endow $A$ with the $R$-algebra
structure induced by the ring homomorphism $f_A:R\longrightarrow A$
such that $f_A(X_i)=g_i(t)$ for each $i=1,2,\cdots,n$. Therefore
$f_A^{-1}(tk[[t]])=(X_1,X_2,\cdots,X_n)$ is a maximal ideal of $R$
of height $n$. Also, as $g_1,g_2,\cdots,g_n$ are algebraically
independent over $k$, it is readily checked that $f_A$ is injective.
Hence $f_A^{-1}((0))=(0)$. It follows that
Spec$_e^A(R)=\{(0),(X_1,X_2,\cdots,X_n)\}$ and, in particular, any
prime ideal of $R$ properly between $(0)$ and the maximal ideal
$(X_1,X_2,\cdots,X_n)$ is not effective with respect to $A$, as
desired .\end{exa}

\begin{rem} 1) Let $A$ be an $R$-algebra. Then, for any $A$-algebra
$B$,  the natural map $f_A^{-1}:$ Spec$_e^B(A)\longrightarrow$
Spec$_e^B(R)$ is surjective while, in general, $f_A^{-1}:$ Spec$(A)\longrightarrow$ Spec$(R)$ is not so.\\
2) Let $A$ be an $R$-algebra. Then dim$_e^A(R)\leq$ dim$(R)$ and
this inequality might be strict as proved by Example 2.5 which shows
that for any positive integer $n\geq 2$ there exists a ring $R$ and
an $R$-algebra $A$ such that dim$(R)=n$ while dim$_e^A(R)=1$, as
desired.
\end{rem}

To get prepared for the general setting of tensor products, we next
introduce local notions of well known concepts of the dimension
theory of rings, namely the height of a prime ideal of a ring $A$,
the spectrum of $A$ and the Krull dimension of $A$.

\begin{defin} {\it Let $R$ be a ring and $A$ be an $R$-algebra.\\
1) Let $p\in$ Spec$_e^A(R)$. Then}

a) {\it Spec$_p(A):=\{I\in$ Spec$(A):f_A^{-1}(I)=p\}$ denotes the
set of all prime ideals of $A$ which contract to $p$ over $R$}.

b) {\it If $I\in$ Spec$_p(A)$, then the height of $I$ at $p$,
denoted by $ht_p(I)$, is the maximum of lengths of chains
$I_0\subset I_1\subset\cdots\subset I_n=I$ of prime ideals of $A$
which contract to $p$ over $R$.}

c) {\it The Krull dimension of $A$ at $p$ is the invariant
$$\mbox {dim}_p(A):=\mbox
{max}\{ht_p(I):I\in\mbox { Spec}_p(A)\}.$$} 2) {\it We define the
fibre Krull dimension of $A$ with respect to $R$ to be the maximal
length of chains of prime ideals of $A$ lying over a common
(effective) prime ideal of $R$ (with respect to $A$), that is the
invariant
$$\mbox {f-dim}_R(A)=\mbox { sup}\{\mbox {dim}_p(A):p\in\mbox {
Spec}_e^A(R)\}.$$} \end{defin}

By virtue of Lemma 2.2, we get the following result which connects
the above local data of $A$ with those relative to fibre rings
issued from $A$.

\begin{cor} Let $R$ be a ring and $A$ be an $R$-algebra. Let $p\in$ Spec$(R)$. Then\\
1) There exists an order-preserving bijective correspondence between
Spec$(k_R(p)\otimes_RA)$ and Spec$_p(A)$.\\
2) If $I\in$ Spec$_p(A)$, then
$ht_p(I)=ht(k_R(p)\otimes_RI)$.\\
3) dim$_p(A)=$ dim$(k_R(p)\otimes_RA)$.\\
4) f-dim$_R(A)=$ sup$\{$dim$(k_A(p)\otimes_RA):p\in$
Spec$_e^A(R)\}$.\end{cor}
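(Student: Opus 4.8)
The plan is to prove all four parts by transporting the local data on $A$ across the bijective correspondence provided by Lemma 2.2(3), which already identifies $\mathrm{Spec}(k_R(p)\otimes_RA)$ with the set of primes of $A$ contracting to $p$, i.e.\ with $\mathrm{Spec}_p(A)$. For part (1), I would simply invoke Lemma 2.2(3): the order-preserving bijection there is exactly the assignment $P\mapsto k_R(p)\otimes_RP$ for $P\in\mathrm{Spec}_p(A)$, with Lemma 2.2(2) guaranteeing these exhaust $\mathrm{Spec}(k_R(p)\otimes_RA)$. So (1) is essentially a restatement of the earlier lemma in the present notation, and nothing new is required.

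For part (2), the key point is that the bijection of (1) is order-preserving \emph{in both directions}, so a chain $I_0\subset I_1\subset\cdots\subset I_n=I$ of primes of $A$ all contracting to $p$ corresponds bijectively to a chain $k_R(p)\otimes_RI_0\subset\cdots\subset k_R(p)\otimes_RI_n=k_R(p)\otimes_RI$ in $\mathrm{Spec}(k_R(p)\otimes_RA)$, and conversely. Taking the supremum of lengths of such chains on each side yields $ht_p(I)=ht(k_R(p)\otimes_RI)$. I would be careful to check that the chain condition ``contracts to $p$'' matches exactly the membership in $\mathrm{Spec}_p(A)$ built into the definition of $ht_p$, so that the two height computations range over corresponding chains. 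Part (3) then follows by taking the maximum over $I\in\mathrm{Spec}_p(A)$ of the equality in (2): since the correspondence is a bijection onto all of $\mathrm{Spec}(k_R(p)\otimes_RA)$, we get
$$\mathrm{dim}_p(A)=\max\{ht_p(I):I\in\mathrm{Spec}_p(A)\}=\max\{ht(Q):Q\in\mathrm{Spec}(k_R(p)\otimes_RA)\}=\mathrm{dim}(k_R(p)\otimes_RA).$$

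For part (4), I would substitute the identity from (3) into the definition of the fibre Krull dimension, namely $\mathrm{f\text{-}dim}_R(A)=\sup\{\mathrm{dim}_p(A):p\in\mathrm{Spec}_e^A(R)\}=\sup\{\mathrm{dim}(k_R(p)\otimes_RA):p\in\mathrm{Spec}_e^A(R)\}$, which is the asserted formula (reading the $k_A(p)$ in the statement as the evident typo for $k_R(p)$). I do not expect any serious obstacle here, since the substance is carried by Lemma 2.2; the only delicate point is verifying that the correspondence of (1) genuinely preserves and reflects inclusions, so that chain lengths on the two sides agree exactly and no spurious primes (those not contracting to $p$) enter the height count. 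That order-theoretic compatibility is the one step I would write out carefully, and everything else is formal bookkeeping on top of it.
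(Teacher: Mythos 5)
Your proposal is correct and matches the paper's approach exactly: the paper states this corollary without a separate proof, deriving it ``by virtue of Lemma 2.2,'' which is precisely your strategy of transporting chains across the order-preserving bijection $P\mapsto k_R(p)\otimes_RP$ from Lemma 2.2(2)--(3) and then taking suprema (and you correctly read $k_A(p)$ in part (4) as a typo for $k_R(p)$). Your one flagged verification---that the correspondence reflects as well as preserves inclusions---is settled by Lemma 2.2(1), which realizes it as the standard quotient-and-localization bijection $P\mapsto S_p^{-1}(P/pA)$.
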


Next, given an $R$-algebra $A$, we give lower and upper bounds of
the Krull dimension of $A$ in terms of the Krull dimension of its
fibre rings and the effective Krull dimension of $R$ with respect to
$A$. Observe that the formulas given in the following theorem are
reminiscent of Seidenberg's inequalities for the Krull dimension of
polynomial rings.

Recall that a ring homomorphism $f:A\longrightarrow B$ is said to
satisfy the Going-Down property (GD for short) if for any prime
ideals $p\subseteq q$ of $A$ such that there exists $Q\in$ Spec$(B)$
with $Q\cap A=q$, then there exists $P\in$ Spec$(B)$ such that
$P\cap A=p$ and  $P\subseteq Q$. It is then easy to see that if a
ring homomorphism $f:R\longrightarrow A$ satisfies GD and if $p\in$
Spec$_e^A(R)$, then any prime ideal $q$ of $R$ such that $q\subseteq
p$ is an effective prime ideal of $R$ with respect to $A$, and thus
ht$_e^A(p)=$ ht$(p)$.

\begin{thm} Let $R$ be a ring and let $A$ be an $R$-algebra. Then\\
1) $\mbox {f-dim}_R(A)\leq\mbox {dim}(A)\leq \mbox
{f-dim}_R(A)+(1+\mbox {f-dim}_R(A))\mbox {dim}_e^{A}(R).$\\
2) If the homomorphism $f_A:R\longrightarrow A$ satisfies the
Going-down property, then,
$$\mbox {sup}\{ht(p)+\mbox {dim}_p(A):p\in\mbox {Spec}_e^A(R)\}\leq\mbox {dim}(A)\leq
\mbox {f-dim}_R(A)+(1+\mbox {f-dim}_R(A))\mbox
{dim}_e^{A}(R).$$\end{thm}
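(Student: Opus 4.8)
The plan is to establish the four inequalities separately, building on Corollary 2.9 which identifies $\dim_p(A)$ with the Krull dimension of the fibre ring $k_R(p)\otimes_R A$, and on the definition of $\text{f-dim}_R(A)$ as the supremum of these $\dim_p(A)$. The first inequality of part (1), namely $\text{f-dim}_R(A)\leq\dim(A)$, is essentially immediate: any chain of primes of $A$ all contracting to a single $p\in\text{Spec}_e^A(R)$ is in particular a chain of primes in $A$, so $\dim_p(A)\leq\dim(A)$ for every effective $p$, and taking the supremum gives the claim. The lower bound in part (2) requires the Going-Down hypothesis and is the heart of the improvement over part (1).

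For the lower bound in part (2), I would fix $p\in\text{Spec}_e^A(R)$ and a chain $I_0\subset I_1\subset\cdots\subset I_m=I$ of primes of $A$ all contracting to $p$, realizing $ht_p(I)=\dim_p(A)$ (or approaching the supremum). The goal is to prepend a chain of length $ht(p)$ below $I_0$. Since $f_A$ satisfies GD and $p=f_A^{-1}(I_0)\in\text{Spec}_e^A(R)$, the remark preceding the theorem tells us $ht_e^A(p)=ht(p)$, and moreover GD lets us lift any chain $q_0\subset q_1\subset\cdots\subset q_t=p$ in $R$ (with $t=ht(p)$) to a chain $P_0\subset P_1\subset\cdots\subset P_t=I_0$ of primes of $A$ with $f_A^{-1}(P_j)=q_j$. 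Splicing this with the fibre chain above $I_0$ yields a chain in $A$ of length $ht(p)+ht_p(I)$, whence $ht(p)+\dim_p(A)\leq\dim(A)$; taking the supremum over $p$ gives the desired lower bound.

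For the upper bound (common to both parts), I would take an arbitrary chain $P_0\subset P_1\subset\cdots\subset P_d=P$ of primes of $A$ with $d=\dim(A)$ and analyze it via the contraction map to $R$. Contracting gives a (weakly) ascending chain $f_A^{-1}(P_0)\subseteq\cdots\subseteq f_A^{-1}(P_d)$ of \emph{effective} primes of $R$ (each lies in $\text{Spec}_e^A(R)$ by Proposition 2.4(1)). The chain $P_0\subset\cdots\subset P_d$ breaks into maximal runs on which the contraction is constant; each such run is a chain in a single fibre $\text{Spec}_p(A)$ and so has length at most $\dim_p(A)\leq\text{f-dim}_R(A)$. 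The number of distinct values taken by the contractions is at most $1+\dim_e^A(R)$, since these distinct values form a chain of effective primes in $R$. Counting the total length by summing the lengths of the runs and adding the jumps between them gives $d\leq(1+\text{f-dim}_R(A))(1+\dim_e^A(R))-1$; after simplification this is exactly $\text{f-dim}_R(A)+(1+\text{f-dim}_R(A))\dim_e^A(R)$.

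\textbf{The main obstacle} I expect is the careful bookkeeping in the upper-bound counting argument: one must correctly account for the ``jumps'' in the chain where the contraction strictly increases versus the segments where it is constant, and verify that the worst case genuinely realizes the stated bound. A subtle point is that the distinct contracted primes form a chain of \emph{effective} primes, so its length is bounded by $\dim_e^A(R)$ rather than $\dim(R)$—this is what makes the effective Krull dimension the right invariant here and distinguishes the result from a naive Seidenberg-type estimate. I would be careful to phrase the run-decomposition so that a run of $r+1$ primes in a common fibre contributes length $r\leq\text{f-dim}_R(A)$, and that between consecutive runs there is exactly one extra step, so that with $s+1$ distinct contracted primes ($s\leq\dim_e^A(R)$) the total length is at most $(s+1)\text{f-dim}_R(A)+s$, which rearranges to the claimed bound.
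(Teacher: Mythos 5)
Your proposal is correct and takes essentially the same route as the paper: the upper bound is obtained by the identical run-decomposition count (with $s+1$ distinct contracted effective primes and each fibre run of length at most $\mbox{f-dim}_R(A)$, giving $d\leq s(\mbox{f-dim}_R(A)+1)+\mbox{f-dim}_R(A)$), and the lower bound in part (2) by the same Going-Down lifting of a chain below $p$ spliced onto a fibre chain, using the remark preceding the theorem that $ht_e^A(p)=ht(p)$ under GD. The only cosmetic differences are that you cite Corollary 2.9 where the paper's fibre-ring identification is Corollary 2.8, and you omit the paper's (trivial) disposal of the cases where $\mbox{f-dim}_R(A)$ or $\mbox{dim}_e^A(R)$ is infinite.
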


\begin{proof} 1) It suffices to prove the second inequality. If either dim$_e^A(R)=+\infty$ or f-dim$_R(A)=+\infty$, then we are done.
Assume that dim$_e^A(R)<+\infty$ and f-dim$_R(A)<+\infty$. Let
$P_0\subset P_1\subset\cdots\subset P_n$ be a chain of distinct
prime ideals of $A$. Then the corresponding chain of contractions
$p_0\subset p_1\subset\cdots\subset p_r$ is composed of effective
prime ideals of $R$ with respect to $A$. Observe that the number of
the $P_i$'s lying over a fixed prime $p_j$ is inferieur than or
equal to the Krull dimension of the fibre ring $k_R(p_j)\otimes_RA$
plus one, that is, dim$_{p_j}(A)+1$, and dim$_{p_j}(A)\leq$
f-dim$_R(A)$. Further, as $p_0\subset p_1\subset\cdots\subset p_r$
is a chain composed of $1+r$ effective prime ideals of $R$ with
respect to $A$ and as $r\leq$ dim$_e^A(R)$, we get
$$\begin{array}{lll}n&\leq&
r(\mbox {f-dim}_R(A)+1)+\mbox {f-dim}_R(A)\\
&\leq&\mbox { f-dim}_R(A)+(\mbox {f-dim}_R(A)+1)\mbox {dim}_e^A(R).\end{array}$$ This yields the desired inequality.\\
2) Assume that $f_A$ satisfies GD. Let $n:=ht_e^A(p)$ and
$p_0\subset p_1\subset\cdots\subset p_n=p$ be a chain of distinct
effective primes of $R$ with respect to $A$. Fixing $P\in$
Spec$_{p}(A)$ and applying the Going-down property yields the
existence of a chain $P_0\subset P_1\subset\cdots\subset P_n=P$ such
that each $P_i\in$ Spec$_{p_i}(A)$ for $i=0,1,\cdots,n-1$. Then
$ht_e^A(p)=n\leq ht(P)$ for each $P\in$ Spec$_{p}(A)$. Let
$Q_0\subset Q_1\subset\cdots\subset Q_r$ be a chain of Spec$_p(A)$
such that $r:=$ dim$(k_R(p)\otimes_RA)=$ dim$_p(A)$. Therefore, as
by the first step $ht_e^A(p)=n\leq ht(Q_0)$, we get
$$ht_e^A(p)+\mbox
{dim}_p(A)\leq ht(Q_0)+r\leq ht(Q_r)\leq\mbox {dim}(A)$$for each
effective prime ideal $p$ of $R$. Hence, by (1), as
$ht_e^A(p)=ht(p)$, we get the desired inequalities completing the
proof.\end{proof}

Next, we list various applications and consequences of Theorem 2.9.
The first result gives a condition for coincidence of the Krull
dimension of $A$ and its fiber Krull dimension with respect to $R$.

\begin{cor} Let $A$ be an $R$-algebra. If dim$_e^{A}(R)=0$, then $$\mbox {dim}(A)=\mbox {f-dim}_R(A).$$
\end{cor}

We aim via the following corollaries to recover Seidenberg's
inequalities for polynomial rings. The next result might be termed
Seidenberg's inequalities for algebras over an arbitrary ring.

\begin{cor} Let $A$ be an $R$-algebra such that $f_A$
satisfies GD. Assume that f-dim$_R(A)=$ dim$_m(A)$ for each $m\in$
Max$_e^A(R)$. Then $$\mbox {f-dim}_R(A)+\mbox {dim}_e^A(R)\leq\mbox
{dim}(A)\leq \mbox {f-dim}_R(A)+(1+\mbox {f-dim}_R(A))\mbox
{dim}_e^{A}(R).$$\end{cor}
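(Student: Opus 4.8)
Corollary 2.11 establishes Seidenberg-style inequalities. Let me sketch a proof plan.

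The upper bound requires nothing new: since $f_A$ satisfies GD, part (2) of Theorem 2.9 already gives $\dim(A)\leq \mbox {f-dim}_R(A)+(1+\mbox {f-dim}_R(A))\dim_e^A(R)$. So the entire content of the corollary is the lower bound $\mbox {f-dim}_R(A)+\dim_e^A(R)\leq\dim(A)$, and my plan is to extract it from the lower estimate in Theorem 2.9(2), namely $\sup\{ht(p)+\dim_p(A):p\in\mbox {Spec}_e^A(R)\}\leq\dim(A)$. It thus suffices to exhibit a single effective prime $p$ at which $ht(p)+\dim_p(A)$ already reaches $\mbox {f-dim}_R(A)+\dim_e^A(R)$, and the hypothesis on the maximal effective primes is exactly what singles out the natural candidate.

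First I would dispose of the infinite cases. If $\mbox {f-dim}_R(A)=+\infty$, the left-hand inequality of Theorem 2.9(1) forces $\dim(A)=+\infty$ and the claim is trivial. If $\dim_e^A(R)=+\infty$, then for every integer $n$ there is a chain of effective primes of length $n$ with top $p_n$, so $ht_e^A(p_n)\geq n$; since $p_n$ is effective, $\mbox {Spec}_{p_n}(A)\neq\emptyset$, whence $\dim_{p_n}(A)\geq 0$. As $f_A$ satisfies GD, the remark preceding Theorem 2.9 gives $ht(p_n)=ht_e^A(p_n)\geq n$, and Theorem 2.9(2) then yields $\dim(A)\geq ht(p_n)\geq n$ for all $n$, i.e. $\dim(A)=+\infty$; the bound again holds.

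Assume now both invariants finite, say $\dim_e^A(R)=d$. The key step is that, heights of effective primes being nonnegative integers, the supremum defining $\dim_e^A(R)$ is attained: there is an effective prime $m$ with $ht_e^A(m)=d$. I claim $m\in\mbox {Max}_e^A(R)$. Indeed, fix a chain $p_0\subset\cdots\subset p_d=m$ of effective primes realizing $ht_e^A(m)=d$; any effective prime properly containing $m$ would extend it to a chain of length $d+1$, contradicting $\dim_e^A(R)=d$, so no effective prime properly contains $m$. The GD hypothesis now gives $ht(m)=ht_e^A(m)=d$, while the standing assumption $\mbox {f-dim}_R(A)=\dim_m(A)$ applies to this very $m$. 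Hence $ht(m)+\dim_m(A)=d+\mbox {f-dim}_R(A)=\dim_e^A(R)+\mbox {f-dim}_R(A)$, and since $m\in\mbox {Spec}_e^A(R)$, the lower estimate of Theorem 2.9(2) delivers $\dim_e^A(R)+\mbox {f-dim}_R(A)\leq\dim(A)$, completing the argument.

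The one delicate point, and the step I would treat most carefully, is the attainment of the effective dimension at a genuinely \emph{maximal} effective prime: this rests on the integrality of heights together with the GD-driven identity $ht_e^A=ht$, and it is precisely what allows the hypothesis on $\mbox {Max}_e^A(R)$ to be invoked. Everything else is routine once the correct $m$ has been produced and Theorem 2.9 is in hand.
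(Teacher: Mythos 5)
Your proof is correct and follows essentially the same route as the paper's: both extract the lower bound from the estimate of Theorem 2.9(2) evaluated at maximal effective primes, combining the GD-driven identity $ht_e^A(m)=ht(m)$ with the hypothesis $\mbox{f-dim}_R(A)=\mbox{dim}_m(A)$ for $m\in\mbox{Max}_e^A(R)$. The only difference is that you make explicit what the paper's one-line ``therefore'' leaves implicit---that $\mbox{dim}_e^A(R)$, when finite, is attained at a prime that is genuinely maximal in $\mbox{Spec}_e^A(R)$, together with the separate treatment of the infinite cases---which is added care rather than a different approach.
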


\begin{proof} Observe, by Theorem 2.9, that, in particular,
sup$\{ht_e^A(m)+$dim$_m(A):m\in$ Max$_e^A(R)\}\leq$ dim$(A)$. Then,
as f-dim$_R(A)=$ dim$_m(A)$ for each $m\in$ Max$_e^A(R)$,
sup$\{ht_e^A(m):m\in$ Max$_e^A(R)\}+$f-dim$_R(A)\leq$ dim$(A)$.
Therefore\\ dim$_e^A(R)+$f-dim$_R(A)\leq$ dim$(A)$ establishing the
desired inequalities.\end{proof}

Next, we recover Seidenberg's inequalities for polynomial rings.

\begin{cor} Let $R$ be a ring and let $X_1,X_2,\cdots,X_n$ be
indeterminates over $R$. Then $$n+\mbox {dim}(R)\leq\mbox
{dim}(R[X_1,X_2,\cdots,X_n])\leq n+(n+1)\mbox{dim}(R).$$\end{cor}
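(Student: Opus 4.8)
The plan is to derive Corollary 2.12 as a direct specialization of Corollary 2.11, applied to the $R$-algebra $A:=R[X_1,X_2,\cdots,X_n]$. The whole strategy rests on verifying that this polynomial algebra satisfies the three hypotheses of Corollary 2.11 and on computing the three invariants $\mbox{f-dim}_R(A)$, $\mbox{dim}_e^A(R)$, and $\mbox{dim}_m(A)$ that appear there. First I would record, from Proposition 2.4(2), that $\mbox{Spec}_e^{R[X_1,\cdots,X_n]}(R)=\mbox{Spec}(R)$, so \emph{every} prime of $R$ is effective with respect to $A$; consequently the effective spectrum coincides with the full spectrum and $\mbox{dim}_e^A(R)=\mbox{dim}(R)$, with $\mbox{Max}_e^A(R)=\mbox{Max}(R)$. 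This single observation turns the two invariants built on the effective spectrum into their ordinary counterparts.

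\textbf{Checking the Going-Down hypothesis and the fibre dimension.} Next I would verify that the structure map $f_A:R\longrightarrow R[X_1,\cdots,X_n]$ satisfies GD. This is standard: a polynomial extension is flat (indeed free) over its base, and flat ring homomorphisms satisfy Going-Down, so I would invoke this as known background. For the fibre dimension, the key computation is that for each prime $p\in\mbox{Spec}(R)$ the fibre ring is, by Lemma 2.2(1), isomorphic to $S_p^{-1}\bigl(R[X_1,\cdots,X_n]/pR[X_1,\cdots,X_n]\bigr)\cong k_R(p)[X_1,\cdots,X_n]$, the polynomial ring in $n$ indeterminates over the field $k_R(p)$. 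Hence by Corollary 2.8(3), $\mbox{dim}_p(A)=\mbox{dim}\bigl(k_R(p)\otimes_RA\bigr)=\mbox{dim}\bigl(k_R(p)[X_1,\cdots,X_n]\bigr)=n$ for every $p$. Since this value $n$ is independent of $p$, it follows immediately that $\mbox{f-dim}_R(A)=n$ and, in particular, that $\mbox{f-dim}_R(A)=\mbox{dim}_m(A)=n$ for every $m\in\mbox{Max}_e^A(R)$, which is exactly the uniformity hypothesis demanded by Corollary 2.11.

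\textbf{Assembling the conclusion.} With all hypotheses of Corollary 2.11 in hand and the substitutions $\mbox{f-dim}_R(A)=n$ and $\mbox{dim}_e^A(R)=\mbox{dim}(R)$ made, the corollary's double inequality reads
$$n+\mbox{dim}(R)\leq\mbox{dim}\bigl(R[X_1,\cdots,X_n]\bigr)\leq n+(n+1)\mbox{dim}(R),$$
which is precisely the desired statement. So the argument is essentially a matter of identifying the three invariants correctly and then quoting Corollary 2.11 verbatim.

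\textbf{The main obstacle.} I expect the only genuinely substantive point to be the fibre computation that $k_R(p)\otimes_RA\cong k_R(p)[X_1,\cdots,X_n]$ together with the classical fact that a polynomial ring in $n$ variables over a field has Krull dimension exactly $n$; everything else is bookkeeping against the effective-spectrum machinery of this section. The GD verification could be stated as a remark if one prefers, since flatness of polynomial extensions is elementary, but it must not be skipped because Corollary 2.11 requires it. No case analysis or delicate chain-lifting is needed beyond what is already packaged inside Theorem 2.9 and Corollary 2.11, so the proof should be short.
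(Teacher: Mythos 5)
Your proposal is correct and takes essentially the same route as the paper's own proof: both verify that $R\longrightarrow R[X_1,\cdots,X_n]$ satisfies GD, that $\mbox{Spec}_e^{R[X_1,\cdots,X_n]}(R)=\mbox{Spec}(R)$ so $\mbox{dim}_e^{R[X_1,\cdots,X_n]}(R)=\mbox{dim}(R)$, and that $\mbox{dim}_p(R[X_1,\cdots,X_n])=\mbox{dim}(k_R(p)[X_1,\cdots,X_n])=n$ for every prime $p$, before quoting Corollary 2.11. The only difference is cosmetic: you make explicit the flatness justification for GD and the Lemma 2.2(1) fibre-ring identification, which the paper leaves implicit.
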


\begin{proof} Observe that the homomorphism $R\longrightarrow R[X_1,X_2,\cdots,X_n]$ satisfies GD and Spec$_e^{R[X_1,\cdots,X_n]}(R)=$
Spec$(R)$, thus dim$_e^{R[X_1,\cdots,X_n]}(R)=$ dim$(R)$. Also,
$$\begin{array}{lll}\mbox {dim}_p(R[X_1,X_2,\cdots,X_n])&=&\mbox {
dim}(k_R(p)\otimes_RR[X_1,X_2,\cdots,X_n])\\
&=&\mbox { dim}(k_R(p)[X_1,X_2,\cdots,X_n])=n\end{array}$$ for each
prime ideal $p$ of $R$. Then
$$\mbox {f-dim}_R(R[X_1,X_2,\cdots,X_n])=\mbox {
dim}_p(R[X_1,X_2,\cdots,X_n])=n$$ for each prime ideal $p$ of $R$.
Now, Corollary 2.11 completes the proof.\end{proof}

\section{Local and global Transcendence degree over an arbitrary
ring}

In this section we introduce the local and global transcendence
degree of algebras over an arbitrary ring.

Recall that if $k$ is a field, then it is customary to denote by
$$\begin{array}{lll}\mbox {t.d.}(A:k)&:=&\mbox { sup}\Big \{\mbox {t.d.}\Big (\displaystyle {\frac Ap}:k\Big ):p\in\mbox { Spec}(A)\Big \}\\
&=&\mbox { sup}\{\mbox {t.d.}(k_A(p):k):p\in\mbox {
Spec}(A)\}\end{array}$$
 the
transcendence degree of a $k$-algebra $A$ over $k$. This section
aims at giving a definition of the notion of the transcendence
degree of an $R$-algebra $A$ over $R$ in accordance with the field
case.

In the same spirit of Definition 2.7 and in order to prepare the
ground for the general case of tensor products over an arbitrary
ring, we next introduce a local notion of the transcendence degree
of an $R$-algebra $A$ over $R$ as well as the ``general"
transcendence degree of $A$ over $R$ which turns out to be in total
accordance with the well known notion of the transcendence degree
over a field.

\begin{defin} {\it Let $R$ be a ring and $A$ an $R$-algebra.}\\
1) {\it Let $p\in$ Spec$_e^A(R)$. We define the transcendence degree
at $p$
 of $A$ over $R$ to be the the transcendence degree of the fibre ring $k_R(p)\otimes_RA$ (over
 $p$) over the field $k_R(p)$, that
 is,}
 $$\mbox {t.d.}_p(A:R):=\mbox {t.d.}\Big
(k_R(p)\otimes_RA:k_R(p)\Big ).$$ 2) {\it We define the
transcendence degree of $A$ over $R$ to be the supremum of the
transcendence degrees of $A$ over $R$ at effective prime ideals $p$
of $R$ with respect to $A$, that is},$$\begin{array}{lll}\mbox
{t.d.}(A:R)&:=&\mbox { sup}
\{\mbox {t.d.}_p(A:R):p\in\mbox { Spec}_e^A(R)\}\\
&=&\mbox { sup}\Big \{\mbox {t.d.}\Big (k_R(p)\otimes_RA:k_R(p)\Big
):p\in\mbox {Spec}_e^A(R)\Big \}.\end{array}$$\end{defin}

We will next prove that the transcendence degree of an $R$-algebra
$A$ over $R$ depends on the endowing structure of algebra of $A$
over $R$, namely on the ring homomorphism $f_A$.

The following proposition shows that the transcendence degree over a
ring $R$ at an effective prime ideal of $R$ shares all known
properties of the transcendence degree over a field $k$. Notice
that, when $P$ is a prime ideal of an $R$-algebra $A$ and
$p:=f_A^{-1}(P)$, then $f_A$ induces an isomorphism between
$\displaystyle {\frac Rp}$ and a subring of $\displaystyle {\frac
AP}$ which means that $\displaystyle {\frac Rp}$ might be identified
with a subring of $\displaystyle {\frac AP}$.

\begin{prop} Let $A$ be an $R$-algebra.\\
1) If $P$ is a prime ideal of $A$ and $p:=f_A^{-1}(P)$, then

\hspace{2cm}t.d.$_p\Big (\displaystyle {\frac AP}:R\Big )=\mbox
{t.d.}\Big (\displaystyle {\frac
AP:\frac Rp}\Big )=$ t.d.$\Big (k_A(P):k_R(p)\Big ).$\\
2) Let $p\in$ Spec$_e^A(R)$. Then,
$$\mbox {t.d.}_p(A:R)=\mbox { sup}\Big \{\mbox {t.d.}_p\Big
(\displaystyle {\frac AP}:R\Big ):P\in\mbox { Spec}_p(A)\Big \}.$$
3) If P is a prime ideal of $A$ with $p:=f_A^{-1}(P)$, then
$$\begin{array}{lll}\mbox {t.d.}_p(A_P:R)&=&\mbox {sup}\Big \{\mbox
{t.d.}_p\Big (\displaystyle {\frac AQ:R}\Big ):Q\in\mbox {
Spec}_p(A)\mbox { such that
}Q\subseteq P\Big \}\\
&=&\mbox { sup}\Big \{\mbox { t.d.}\Big (k_A(Q):k_R(p)\Big
):Q\in\mbox { Spec}_p(A)\mbox { with } Q\subseteq P\Big
\}\end{array}$$
$\begin{array}{lll}\mbox {4) t.d.}(A:R)&=&\mbox {
sup}\Big \{\mbox {t.d.}_p\Big (\displaystyle {\frac AP:R}\Big
):p\in\mbox { Spec}(R)\mbox { and
}P\in\mbox { Spec}_p(A)\Big \}\\
&=&\mbox { sup}\Big \{\mbox {t.d.}\Big (\displaystyle {\frac
AP:\frac Rp}\Big ):p\in\mbox { Spec}(R)\mbox { and }P\in\mbox {
Spec}_p(A)\Big \}.\end{array}$\end{prop}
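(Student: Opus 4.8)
The plan is to reduce every assertion to the classical field-case behaviour of transcendence degree applied to the fibre ring $k_R(p)\otimes_R A$, using Lemma 2.2 and Corollary 2.8 as the transfer devices. The cornerstone is part (1), so I would establish it first. Since $p=f_A^{-1}(P)$, the image $f_A(p)$ lies in $P$, hence $p$ annihilates $A/P$; applying Lemma 2.2(1) to the $R$-algebra $A/P$ therefore gives $k_R(p)\otimes_R (A/P)\cong S_p^{-1}(A/P)$, i.e. the localization of the domain $A/P$ at the image of $R/p\setminus\{\overline 0\}$. This localization is itself a domain containing $k_R(p)$ and having $k_A(P)$ as its field of fractions. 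Because the transcendence degree of a domain over a subfield coincides with that of its fraction field, I would conclude $\text{t.d.}_p(A/P:R)=\text{t.d.}(k_A(P):k_R(p))$; the identification of $R/p$ with a subring of $A/P$ noted before the statement makes the middle term $\text{t.d.}(A/P:R/p)$ equal to $\text{t.d.}(k_A(P):k_R(p))$ by the very convention for transcendence degree of one domain over a subdomain, which settles (1).

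For part (2), I would unwind the definition $\text{t.d.}_p(A:R)=\text{t.d.}(k_R(p)\otimes_R A:k_R(p))$ and apply the field-case formula recalled at the start of the section, writing it as the supremum of $\text{t.d.}(k_{k_R(p)\otimes_R A}(\mathfrak P):k_R(p))$ over $\mathfrak P\in\text{Spec}(k_R(p)\otimes_R A)$. Corollary 2.8(1) replaces this index set by $\text{Spec}_p(A)$ via $\mathfrak P=k_R(p)\otimes_R P$, while Lemma 2.2(5) gives $\frac{k_R(p)\otimes_R A}{k_R(p)\otimes_R P}\cong k_R(p)\otimes_R \frac AP$, so that $\text{t.d.}(k_{k_R(p)\otimes_R A}(\mathfrak P):k_R(p))=\text{t.d.}_p(A/P:R)$ by (1). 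Taking the supremum yields (2).

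Part (3) runs along the same lines but localizes instead of taking a quotient: Lemma 2.2(4) identifies $k_R(p)\otimes_R A_P$ with the localization $(k_R(p)\otimes_R A)_{k_R(p)\otimes_R P}$, whose primes are exactly those contained in $\mathfrak P=k_R(p)\otimes_R P$. Using that the order-preserving bijection of Corollary 2.8(1) matches these with the $Q\in\text{Spec}_p(A)$ satisfying $Q\subseteq P$, and again invoking Lemma 2.2(5) together with (1), I would obtain both displayed expressions. Finally, part (4) is a formal consequence: substituting (2) into the definition $\text{t.d.}(A:R)=\sup\{\text{t.d.}_p(A:R):p\in\text{Spec}_e^A(R)\}$ turns it into a double supremum over $p$ and over $P\in\text{Spec}_p(A)$; since $\text{Spec}_p(A)$ is empty precisely when $p\notin\text{Spec}_e^A(R)$ (Proposition 2.4(1)), the outer range may be enlarged to all of $\text{Spec}(R)$ without changing the value, and collapsing the double supremum to a supremum over pairs $(p,P)$, followed by (1), gives the two stated forms.

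I expect the only genuinely delicate point to be the bookkeeping in part (1): one must be careful that $k_R(p)\otimes_R (A/P)$ is a domain, not a field, whose transcendence degree over $k_R(p)$ is governed by its fraction field, and that this fraction field is indeed $k_A(P)$. Everything else is a transfer through the dictionary provided by Lemma 2.2 and Corollary 2.8, so the remaining parts are essentially formal once (1) is in hand.
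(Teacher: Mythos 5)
Your proposal is correct and follows essentially the paper's own route: part (1) is obtained exactly as in the paper by applying Lemma 2.2(1) to $A/P$, noting that $p$ annihilates $A/P$ and that transcendence degree is insensitive to localizing at the nonzero image of $R\setminus p$, and parts (2)--(4) are then the same transfer through the fibre-ring dictionary (Lemma 2.2(2),(5) / Corollary 2.8 for (2), and the empty-fibre observation for enlarging the index set in (4)). The only cosmetic difference is in (3), where you localize the fibre ring via Lemma 2.2(4) while the paper instead applies its part (2) directly to the $R$-algebra $A_P$ and uses $A_P/Q_P\cong (A/Q)_P$; these are interchangeable bookkeeping for the same argument.
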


\begin{proof} Let $S_p=\displaystyle {\frac Rp}\setminus \{\overline 0\}$ for each prime ideal $p$ of $R$.\\
1) Let $P$ be a prime ideal of $A$ and let $p:=f_A^{-1}(P)$. Then,
$$\begin{array}{lll}\mbox {t.d.}_p\Big (\displaystyle {\frac
AP}:R\Big )&=&\mbox { t.d.}\Big (k_R(p)\otimes_R\displaystyle
{\frac AP}:k_R(p)\Big )\\
&=&\mbox {t.d.}\Big (S_p^{-1}\displaystyle {\frac
{A/P}{p(A/P)}}:k_R(p)\Big )\mbox { (Lemma 2.2(1))}\\
&=&\mbox {t.d.}\Big (S_p^{-1}\displaystyle {\frac AP}:k_R(p)\Big )\mbox { as }p\mbox { }\displaystyle {\frac AP}=f_A(p)\displaystyle {\frac AP}=\{\overline 0\}\\
&=&\mbox {t.d.}\Big (\displaystyle {\frac AP:\frac Rp}\Big
),\end{array}$$as desired.\\
2) Let $p\in$ Spec$_e^A(R)$. Then, by (1) and Lemma 2.2((2) and
(5)),
$$\begin{array}{lll} \mbox {t.d.}_p(A:R)&:=&\mbox {
t.d.}\Big (k_R(p)\otimes_RA:k_R(p)\Big )\\
&=&\mbox { sup}\Big \{\displaystyle {\mbox {t.d.}\Big (\frac
{k_R(p)\otimes_RA}{k_R(p)\otimes_RP}}:k_R(p)\Big ): P\in\mbox {
Spec}_p(A)\Big \}\\
&=&\mbox { sup}\Big \{\displaystyle {\mbox {t.d.}\Big (
k_R(p)\otimes_R\frac AP}:k_R(p)\Big ): P\in\mbox {
Spec}_p(A)\Big \}\\
&=&\mbox { sup}\Big \{\mbox {t.d.}_p\Big (\displaystyle {\frac
AP}:R\Big ):P\in\mbox { Spec}_p(A)\Big \}\\
&=&\mbox {sup}\Big \{\mbox { t.d.}\Big (\displaystyle {\frac
AP:\frac Rp}\Big ):P\in\mbox { Spec}_p(A)\Big \}.\end{array}$$
\noindent 3) Let $P$ be a prime
ideal of $A$ and $p:=f_A^{-1}(P)$. Then, by (1) and (2),\\
$\begin{array}{lll} \mbox {t.d.}_p(A_P:R)&=&\mbox { sup}\Big \{\mbox
{t.d.}\Big (\displaystyle {\frac {A_P}{Q_P}:\frac Rp}\Big
):Q\in\mbox {
Spec}_p(A)\mbox { such that }Q\subseteq P\Big \}\\
&=&\mbox { sup}\Big \{\mbox {t.d.}\Big (\displaystyle {\frac
{A}{Q}:\frac Rp}\Big ):Q\in\mbox {
Spec}_p(A)\mbox { such that }Q\subseteq P\Big \}\\
&=&\mbox { sup}\Big \{\mbox {t.d.}_p\Big (\displaystyle {\frac
{A}{Q}:R}\Big ):Q\in\mbox {
Spec}_p(A)\mbox { such that }Q\subseteq P\Big \}.\end{array}$\\
4) It follows easily from (1) and (2) completing the
proof.\end{proof}

It is notable that the introduced notion of transcendence degree of
$A$ over $R$ depends on the structure of $R$-algebra over $A$,
namely, on the ring
homomorphism $f_A$, as shown by the following simple example.\\

\begin{exa} Let $k$ be a field and $X$ an indeterminate over $k$. Let $R=k[X]\times k$ and let
$A=k(X)$. Consider the following ring homomorphisms
$f_1,f_2:R\longrightarrow A$ such that $f_1(g(X),\alpha)=g(X)$ and
$f_2(g(X),\alpha)=\alpha$. These two homomorphisms define two
different $R$-algebra structures over $A$. Moreover, observe that
$f_1^{-1}((0))=(0)\times k$ and $f_2^{-1}((0))=k[X]\times (0)$. Then
Spec$_e^{(A,f_1)}(R)=\{(0)\times k\}$ and
Spec$_e^{(A,f_2)}(R)=\{k[X]\times (0)\}$. Hence, by Proposition
3.2(4),
$$\begin{array}{lll}\mbox {t.d.}(A:_{f_1}R)&=&\mbox {t.d.}\Big (k(X):\displaystyle {\frac {k[X]\times k}{(0)\times k}\Big )}\\
&=&\mbox {t.d.}(k(X):k[X])=0\end{array}$$ while
$$\begin{array}{lll}\mbox {t.d.}(A:_{f_2}R)&=&\mbox {t.d.}\Big (k(X):\displaystyle {\frac {k[X]\times k}{k[X]\times (0)}\Big )}\\
&=&\mbox {t.d.}(k(X):k)=1.\end{array}$$
\end{exa}

\section{Effective spectrum with respect to tensor products}

Let $R$ be a ring and let $A$ and $B$ be $R$-algebras. First, it is
worth to note that the tensor product $A\otimes_RB$ over $R$ might
be trivial even if $A$ and $B$ are not so. Of course, the
interesting case is when $A\otimes_RB\neq \{0\}$ which makes it
legitimate to introduce the notion of a triplet of rings $(R,A,B)$
consisting of a given ring $R$ and two $R$-algebras $A$ and $B$ such
that $A\otimes_RB\neq \{0\}$.

Let $A$ and $B$ be $R$-algebras. We denote by
$\mu_A:A\longrightarrow A\otimes_RB$ and $\mu_B:B\longrightarrow
A\otimes_RB$ the canonical algebra homomorphisms over $A$ and $B$,
respectively, such that $\mu_A(a)=a\otimes_R1$ and
$\mu_B(b)=1\otimes_Rb$ for each $a\in A$ and each $b\in B$. Observe
that the following diagram (D) is commutative:

$$\begin{array}{ccccc}
&&A&&\\
&\stackrel {f_A}\nearrow &&\stackrel {\mu_A}\searrow&\\
R&&\stackrel {f_{A\otimes_RB}}\longrightarrow&&A\otimes_RB\\
&\stackrel {f_B}\searrow&&\stackrel {\mu_B}\nearrow&\\
&&B&&
\end{array}$$

\noindent From this section onward, given ideals $I,J,H$ of $A$, $B$
and $A\otimes_RB$, respectively, we adopt the following notation for
easiness: $I\cap R:=f_A^{-1}(I)$, $J\cap R:=f_B^{-1}(J)$ and $H\cap
A:=\mu_A^{-1}(H)$, $H\cap B:=\mu_B^{-1}(H)$.\\

We begin by recording the following isomorphisms related to the
fiber rings of the tensor products over an arbitrary ring
$R$.\\

\begin{lem} {\it Let $R$ be a ring. Let $A$ and $B$
be algebras over $R$. Then}
$$k_R(p)\otimes_R(A\otimes_RB)\cong (k_R(p)\otimes_RA)\otimes_{k_R(p)}(k_R(p)\otimes_RB)\cong S_p^{-1}\Big (\frac A{pA}\Big )
\otimes_{k_R(p)} S_p^{-1}\Big (\frac B{pB}\Big )$$ where
$S_p:=\displaystyle {\frac Rp}\setminus \{0\}$.\end{lem}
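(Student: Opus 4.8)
The plan is to derive the first isomorphism from the standard associativity and base-change properties of the tensor product, and then to obtain the second isomorphism as an immediate consequence of Lemma 2.2(1). Throughout I would write $K := k_R(p)$, which is an $R$-algebra via the composite $R\to \frac Rp\hookrightarrow K$; then $K\otimes_RA$ and $K\otimes_RB$ carry natural $K$-algebra structures, so that the tensor product over $K$ on the right-hand side is meaningful and the asserted maps are maps of $K$-algebras.

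For the first isomorphism, I would first recall the base-change identity: if $S$ is an $R$-algebra, $M$ an $S$-module and $N$ an $R$-module, then there is a natural isomorphism $M\otimes_RN\cong M\otimes_S(S\otimes_RN)$, which follows from associativity of the tensor product together with $M\otimes_SS\cong M$. Applying this with $S=K$, $M=K\otimes_RA$ (regarded as a $K$-module) and $N=B$ yields
$$(K\otimes_RA)\otimes_RB\;\cong\;(K\otimes_RA)\otimes_K(K\otimes_RB).$$
On the other hand, plain associativity of the tensor product gives
$$(K\otimes_RA)\otimes_RB\;\cong\;K\otimes_R(A\otimes_RB).$$
Composing these two isomorphisms produces the desired
$$K\otimes_R(A\otimes_RB)\;\cong\;(K\otimes_RA)\otimes_K(K\otimes_RB).$$
One then checks that the composite is not merely an $R$-module isomorphism but a $K$-algebra isomorphism; this is routine, since the map is induced on generators by $\lambda\otimes_R(a\otimes_Rb)\mapsto(\lambda\otimes_Ra)\otimes_K(1\otimes_Rb)$, which manifestly respects multiplication and the $K$-action.

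If one prefers to stay in the explicit style of Lemma 2.1, the same isomorphism can instead be built by hand: one exhibits the appropriate multilinear map on $K\times A\times B$ sending $(\lambda,a,b)$ to $(\lambda\otimes_Ra)\otimes_K(1\otimes_Rb)$, constructs an explicit inverse on generators $(\alpha\otimes_Ra)\otimes_K(\beta\otimes_Rb)$, and verifies that the two composites are the respective identities. Either way the whole content is the universal property of the tensor product.

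Finally, for the second isomorphism I would simply invoke Lemma 2.2(1), which gives $K\otimes_RA\cong S_p^{-1}\big(\frac A{pA}\big)$ and $K\otimes_RB\cong S_p^{-1}\big(\frac B{pB}\big)$ as $K$-algebras; tensoring these two isomorphisms over $K$ yields
$$(K\otimes_RA)\otimes_K(K\otimes_RB)\;\cong\;S_p^{-1}\Big(\frac{A}{pA}\Big)\otimes_K S_p^{-1}\Big(\frac{B}{pB}\Big),$$
which is exactly the second claimed isomorphism. I do not expect any genuine obstacle here: all maps involved are the canonical ones and are therefore automatically compatible with the algebra structures, so the only point requiring care throughout is the bookkeeping of ground rings in the associativity steps.
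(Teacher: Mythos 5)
Your proposal is correct and follows essentially the same route as the paper, whose entire proof is the one-line remark that the lemma ``is direct by Lemma 2.2(1)'': you obtain the second isomorphism exactly as the paper intends, by applying Lemma 2.2(1) to $A$ and $B$ and tensoring over $k_R(p)$, and your first isomorphism merely spells out the standard base-change and associativity identities that the paper treats as immediate. The only difference is level of detail, not substance.
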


\begin{proof} It is direct by Lemma 2.2(1).\end{proof}

\noindent {\it Remark.} {\it Let $R$ be a ring and $A,B$ be two
$R$-algebras. Let $I$ be a prime ideal of $A$. Notice that, by
considering the ring homomorphism $\mu_A:A\longrightarrow
A\otimes_RB$, $k_A(I)\otimes_RB\cong k_A(I)\otimes_A(A\otimes_RB)$
stands for the fibre ring of $A\otimes_RB$ over $I$. Thus
f-dim$_A(A\otimes_RB)=$ sup$\{$dim$(k_A(I)\otimes_RB):I\in$
Spec$_e^{A\otimes_RB}(A)\}$.}\\

The next theorem examines the effective spectrum of a ring with
respect to tensor products.

\begin{thm} Let $R$ be a ring. Let $A$ and $B$ be two $R$-algebras. Let $p\in$ Spec$(R)$, $I\in$ Spec$(A)$ and $J\in$
Spec$(B)$. Then\\
1) Spec$_e^{A\otimes_RB}(R)=$ Spec$_e^A(R)\cap$ Spec$_e^B(R)$.\\
2) There exists a prime ideal $P$ of $A\otimes_RB$ such that $P\cap
A=I$ and $P\cap B=J$ if and only if $I\cap R=J\cap R$.\\
3) $$\begin{array}{lll} \mbox {Spec}_e^{A\otimes_RB}(A)&=&\{I\in\mbox {Spec}(A):I\cap R\in\mbox {Spec}_e^{B}(R)\}\\
&=&\{I\in\mbox {Spec}(A):\exists J\in\mbox {Spec}(B)\mbox { such
that }I\cap R=J\cap R\}.\end{array}$$
\end{thm}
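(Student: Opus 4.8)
The plan is to prove the three parts in the order (1), (2), (3), with part (2) carrying the genuine content; parts (1) and (3) reduce to it together with earlier results.

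For part (1), I would invoke Lemma 4.1 to rewrite the fibre ring as $k_R(p)\otimes_R(A\otimes_RB)\cong(k_R(p)\otimes_RA)\otimes_{k_R(p)}(k_R(p)\otimes_RB)$. Since $k_R(p)$ is a field, each factor is a $k_R(p)$-vector space, and the tensor product of two vector spaces over a field vanishes precisely when one of the factors vanishes. Hence the left-hand side is nonzero if and only if both $k_R(p)\otimes_RA$ and $k_R(p)\otimes_RB$ are nonzero, which is exactly the assertion $p\in\mbox{Spec}_e^A(R)\cap\mbox{Spec}_e^B(R)$.

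For part (2), the forward implication is a diagram chase in (D): if $P\cap A=I$ and $P\cap B=J$, then using $f_{A\otimes_RB}=\mu_A\circ f_A=\mu_B\circ f_B$ one obtains $I\cap R=f_A^{-1}(\mu_A^{-1}(P))=f_{A\otimes_RB}^{-1}(P)=f_B^{-1}(\mu_B^{-1}(P))=J\cap R$. The converse is the main obstacle. Writing $p:=I\cap R=J\cap R$, I would form the field extensions $k_A(I)$ and $k_B(J)$ of $k_R(p)$ and consider $T:=k_A(I)\otimes_{k_R(p)}k_B(J)$. As a tensor product of field extensions over the field $k_R(p)$, this is nonzero, so it admits a prime ideal $\mathfrak{m}$. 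The composites $A\to k_A(I)$ and $B\to k_B(J)$ restrict on $R$ to the canonical map $R\to k_R(p)\to T$, so they are compatible and induce a ring homomorphism $\phi:A\otimes_RB\to T$, $a\otimes b\mapsto\bar a\otimes\bar b$. Setting $P:=\phi^{-1}(\mathfrak{m})$, I would verify the contractions. The key point is that each factor embeds as a subfield, $k_A(I)\hookrightarrow T$ and $k_B(J)\hookrightarrow T$, because $1\otimes 1\neq 0$ and the factors are fields; consequently $\mathfrak{m}\cap k_A(I)=(0)$ and $\mathfrak{m}\cap k_B(J)=(0)$. Tracing $\phi\circ\mu_A$ through $A\to k_A(I)\hookrightarrow T$ then forces $P\cap A$ to be the kernel of $A\to k_A(I)$, namely $I$, and symmetrically $P\cap B=J$.

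For part (3), I would first apply Proposition 2.4(1) to the $A$-algebra $A\otimes_RB$ (with structure map $\mu_A$) to obtain $\mbox{Spec}_e^{A\otimes_RB}(A)=\mu_A^{-1}(\mbox{Spec}(A\otimes_RB))$; that is, $I$ lies in this set exactly when $I=P\cap A$ for some prime $P$ of $A\otimes_RB$. By part (2), such a $P$ exists if and only if there is $J\in\mbox{Spec}(B)$ with $I\cap R=J\cap R$ (taking $J:=P\cap B$ in one direction and applying the converse of (2) in the other), which yields the second displayed equality. Finally, by Proposition 2.4(1) applied to $B$, the condition $I\cap R\in\mbox{Spec}_e^B(R)$ is literally the statement that $I\cap R=f_B^{-1}(J)$ for some $J\in\mbox{Spec}(B)$, i.e. $I\cap R=J\cap R$; this identifies the two descriptions and completes the proof.
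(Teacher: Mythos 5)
Your proposal is correct, and it diverges from the paper in one substantive place: part (2). The paper does not prove (2) at all --- it simply cites \cite[Corollaire 3.2.7.1.(i)]{EGA1} --- whereas you reconstruct the standard argument behind that citation: form $T:=k_A(I)\otimes_{k_R(p)}k_B(J)$, note that $T$ is a nonzero ring because it is a tensor product of nonzero vector spaces over the field $k_R(p)$, pull back a prime $\mathfrak{m}$ of $T$ along the induced homomorphism $\phi:A\otimes_RB\longrightarrow T$, and use the fact that a ring homomorphism from a field into a nonzero ring is injective to conclude $\mathfrak{m}\cap k_A(I)=(0)$ and $\mathfrak{m}\cap k_B(J)=(0)$, whence $P\cap A=I$ and $P\cap B=J$ for $P:=\phi^{-1}(\mathfrak{m})$. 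This makes the theorem self-contained where the paper defers to EGA; the one point worth spelling out in a final write-up is that the compatibility of $A\to k_A(I)\to T$ and $B\to k_B(J)\to T$ over $R$ (needed for $\phi$ to exist) rests on the induced embeddings $R/p\hookrightarrow A/I$ and $R/p\hookrightarrow B/J$ extending to $k_R(p)$, which is exactly where the hypothesis $I\cap R=J\cap R=p$ enters. In part (1) you also take a marginally different route for the forward implication: the paper pulls back a prime $P$ of $A\otimes_RB$ lying over $p$ and chases the commutative diagram (D) to get $p=I\cap R=J\cap R$ with $I=P\cap A$, $J=P\cap B$, while you deduce both implications at once from Lemma 4.1 together with the linear-algebra fact that a tensor product of vector spaces over $k_R(p)$ vanishes exactly when one factor does; your version is slightly cleaner, since it avoids invoking primes altogether. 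Part (3) coincides with the paper's argument, your explicit appeals to Proposition 2.4(1) merely making precise the correspondence the paper uses implicitly.
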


\begin{proof} 1) Let $p\in$ Spec$_e^{A\otimes_RB}(R)$. Then there exists a prime ideal $P$ of
$A\otimes_RB$ such that $p=P\cap R$. Let $I=P\cap A$ and $J=P\cap
B$. Then, by the above commutative diagram (D), $I$ and $J$ are
prime ideals of $A$ and $B$, respectively, such that $I\cap R=J\cap
R=P\cap R=p$, that is, $p\in$ Spec$_e^A(R)\cap$ Spec$_e^B(R)$.
Conversely, assume that $p\in$ Spec$_e^A(R)\cap$ Spec$_e^B(R)$.
Hence, by Lemma 4.1, the fibre ring
$$k_R(p)\otimes_R(A\otimes_RB)\cong(k_R(p)\otimes_RA)\otimes_{k_R(p)}(k_R(p)\otimes_RB)\neq
\{0\}$$ as the fibre rings $k_R(p)\otimes_RA$ and $k_R(p)\otimes_RB$
are not trivial. It follows that $p\in$ Spec$_e^{A\otimes_RB}(R)$,
as
desired.\\
2) See \cite[Corollaire 3.2.7.1.(i)]{EGA1}.\\
3) First, let $I\in$ Spec$(A)$ such that $I\cap R\in$ Spec$_e^B(R)$.
Then, there exists $J\in$ Spec$(B)$ such that $I\cap R=J\cap R$ so
that by (2), there exists $P\in$ Spec$(A\otimes_RB)$ such that
$P\cap A=I$ (and $P\cap B=J$). Thus $I\in$
Spec$_e^{A\otimes_RB}(A)$. Conversely, let $I\in$
Spec$_e^{A\otimes_RB}(A)$. Then there exists $P\in$
Spec$(A\otimes_RB)$ such that $P\cap A=I$, so that, using the above
commutative diagram (D), $I\cap R=P\cap R=(P\cap B)\cap R\in$
Spec$_e^B(R)$ completing the proof.
\end{proof}

The following corollary totally characterizes when two algebras $A$
and $B$ over a ring $R$ constitute a triplet $(R,A,B)$ of rings.

\begin{cor} Let $R$ be a ring and $A,B$ be two $R$-algebras. Then the following assertions are equivalent:

1) $(R,A,B)$ is a triplet of rings;

2) Spec$_e^A(R)\cap$ Spec$_e^B(R)\neq\emptyset$;

3) There exists a prime ideal $I$ of $A$ and a prime ideal $J$ of
$B$ such that $I\cap R=J\cap R$.\end{cor}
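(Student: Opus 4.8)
The plan is to establish the three-way equivalence by directly assembling Theorem 4.2 and Proposition 2.4, so that no new computation is needed beyond the nontriviality criterion for the tensor product. I would prove $(1)\Leftrightarrow(2)$ and $(2)\Leftrightarrow(3)$ separately and conclude.

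For $(1)\Leftrightarrow(2)$: first I would observe that, since $A\otimes_RB$ is a commutative ring with identity, it is nontrivial precisely when it admits a prime (equivalently maximal) ideal, i.e. $A\otimes_RB\neq\{0\}$ iff $\mbox{Spec}(A\otimes_RB)\neq\emptyset$. Applying Proposition 2.4(1) to the $R$-algebra $A\otimes_RB$ gives $\mbox{Spec}_e^{A\otimes_RB}(R)=f_{A\otimes_RB}^{-1}(\mbox{Spec}(A\otimes_RB))$; since the preimage of a prime under a ring homomorphism is again prime, this set is nonempty exactly when $\mbox{Spec}(A\otimes_RB)$ is. Finally Theorem 4.2(1) identifies $\mbox{Spec}_e^{A\otimes_RB}(R)$ with $\mbox{Spec}_e^A(R)\cap\mbox{Spec}_e^B(R)$. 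Chaining these three equivalences yields $(1)\Leftrightarrow(2)$.

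For $(2)\Leftrightarrow(3)$: I would apply Proposition 2.4(1) to $A$ and to $B$ separately, giving $\mbox{Spec}_e^A(R)=f_A^{-1}(\mbox{Spec}(A))$ and $\mbox{Spec}_e^B(R)=f_B^{-1}(\mbox{Spec}(B))$. Thus a prime $p$ of $R$ lies in the intersection iff there exist $I\in\mbox{Spec}(A)$ and $J\in\mbox{Spec}(B)$ with $I\cap R=p=J\cap R$. Hence the intersection is nonempty iff such $I,J$ exist with $I\cap R=J\cap R$, which is precisely condition $(3)$. Alternatively, one may route $(3)$ through Theorem 4.2(3), which already records that such $I,J$ exist iff $\mbox{Spec}_e^{A\otimes_RB}(A)\neq\emptyset$.

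The proof is essentially bookkeeping, so I do not expect a genuine obstacle; the only substantive content has already been absorbed into Theorem 4.2(1). The conceptual heart---which I would flag in a sentence rather than reprove---is the tensor-product fibre isomorphism of Lemma 4.1, namely $k_R(p)\otimes_R(A\otimes_RB)\cong(k_R(p)\otimes_RA)\otimes_{k_R(p)}(k_R(p)\otimes_RB)$: because a tensor product of two nonzero vector spaces over the field $k_R(p)$ is nonzero, the existence of a single prime $p$ at which both fibres $k_R(p)\otimes_RA$ and $k_R(p)\otimes_RB$ survive is exactly what forces $A\otimes_RB\neq\{0\}$. The one minor point to state carefully is that nontriviality of $A\otimes_RB$ is equivalent to $\mbox{Spec}(A\otimes_RB)\neq\emptyset$, which uses $1\neq 0$ in the ring.
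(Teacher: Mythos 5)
Your proposal is correct and follows essentially the same route as the paper: the paper likewise proves $(1)\Leftrightarrow(2)$ by combining Proposition 2.4(1) (giving $\mbox{Spec}_e^{A\otimes_RB}(R)=f_{A\otimes_RB}^{-1}(\mbox{Spec}(A\otimes_RB))$) with Theorem 4.2(1) (giving $\mbox{Spec}_e^{A\otimes_RB}(R)=\mbox{Spec}_e^A(R)\cap\mbox{Spec}_e^B(R)$), and dismisses $(2)\Leftrightarrow(3)$ as direct, exactly the bookkeeping you carry out via Proposition 2.4(1) applied to $A$ and $B$ separately. Your extra remarks---that a nonzero ring has a prime ideal, and that Lemma 4.1 is the substantive input behind Theorem 4.2(1)---merely make explicit what the paper leaves implicit.
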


\begin{proof}
1) $\Leftrightarrow$ 2) It suffices to observe that, by Proposition
2.4(1) and Theorem 4.2,
$$f_{A\otimes_RB}^{-1}(\mbox {Spec}(A\otimes_RB))=\mbox {Spec}_e^{A\otimes_RB}(R)=\mbox { Spec}_e^A(R)\cap\mbox {
Spec}_e^B(R).$$ \noindent 2) $\Leftrightarrow$ 3) It is
direct.\\
\end{proof}

It is easy to provide examples of nontrivial algebras over a ring
$R$ such that $A\otimes_RB=\{0\}$ is trivial. But Corollary 4.3
characterizes when this tensor product $A\otimes_RB$ is trivial by
checking connections between the spectrum of the three components of
this construction, namely Spec$(R)$, Spec$(A)$ and Spec$(B)$. For
instance, given a ring $R$ and two distinct prime ideals $p$ and $q$
of $R$, applying Corollary 4.3, note that
$k_R(p)\otimes_Rk_R(q)=\{\overline 0\}$ since
Spec$_e^{k_R(p)}(R)=\{p\}$ while Spec$_e^{k_R(q)}(R)=\{q\}$ and thus
Spec$_e^{k_R(p)}(R)\cap$
Spec$_e^{k_R(q)}(R)=\emptyset$.\\

\begin{cor} Let $(R,A,B)$ be a triplet of rings. Then $$\mbox {dim}_e^{A\otimes_RB}(R)\leq\mbox {min}(\mbox
{dim}_e^A(R),\mbox {dim}_e^B(R)).$$ In particular, if either
dim$_e^A(R)=0$ or dim$_e^B(R)=0$, then
dim$_e^{A\otimes_RB}(R)=0$.\end{cor}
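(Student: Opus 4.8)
The plan is to reduce everything to the containment of effective spectra established in Theorem 4.2(1). First I would invoke that theorem to record the inclusions
$$\mbox{Spec}_e^{A\otimes_RB}(R)=\mbox{Spec}_e^A(R)\cap\mbox{Spec}_e^B(R)\subseteq\mbox{Spec}_e^A(R),\qquad \mbox{Spec}_e^{A\otimes_RB}(R)\subseteq\mbox{Spec}_e^B(R).$$
The crucial consequence is that any chain of effective primes of $R$ with respect to $A\otimes_RB$ is, term by term, a chain of effective primes with respect to $A$ (and likewise with respect to $B$), since every member of such a chain lies in the intersection and hence in each factor.

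Next I would compare effective heights pointwise. Fixing $p\in\mbox{Spec}_e^{A\otimes_RB}(R)$ and taking any chain $p_0\subset p_1\subset\cdots\subset p_n=p$ of effective primes with respect to $A\otimes_RB$ terminating at $p$, the observation above shows that each $p_i$ lies in $\mbox{Spec}_e^A(R)$, so the very same chain is admissible in the computation of $ht_e^A(p)$. This yields $ht_e^{A\otimes_RB}(p)\leq ht_e^A(p)$, and symmetrically $ht_e^{A\otimes_RB}(p)\leq ht_e^B(p)$. Taking the supremum over all $p\in\mbox{Spec}_e^{A\otimes_RB}(R)$ and then using the inclusion $\mbox{Spec}_e^{A\otimes_RB}(R)\subseteq\mbox{Spec}_e^A(R)$ to pass to a supremum over the larger index set gives $\dim_e^{A\otimes_RB}(R)\leq\dim_e^A(R)$, and the same argument with $B$ gives $\dim_e^{A\otimes_RB}(R)\leq\dim_e^B(R)$; together these are exactly the asserted bound by the minimum. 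The only point requiring a little care is that restricting the supremum to the smaller set $\mbox{Spec}_e^{A\otimes_RB}(R)$ can only decrease it relative to $\dim_e^A(R)$, which is precisely what the inclusion of effective spectra supplies.

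Finally, the ``in particular'' clause is immediate. If $\dim_e^A(R)=0$ (or $\dim_e^B(R)=0$), then the right-hand minimum vanishes, forcing $\dim_e^{A\otimes_RB}(R)\leq 0$; since $(R,A,B)$ is a triplet, Corollary 4.3 guarantees $\mbox{Spec}_e^{A\otimes_RB}(R)\neq\emptyset$, so $\dim_e^{A\otimes_RB}(R)$ is a supremum of (nonnegative) effective heights over a nonempty set and is therefore $\geq 0$, whence $\dim_e^{A\otimes_RB}(R)=0$. I do not anticipate any genuine obstacle here: the entire substantive content is packaged in Theorem 4.2(1), and the remainder is bookkeeping with chains of primes and monotonicity of suprema.
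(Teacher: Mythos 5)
Your proof is correct and follows essentially the same route as the paper, which simply cites Theorem 4.2(1) (the identity $\mbox{Spec}_e^{A\otimes_RB}(R)=\mbox{Spec}_e^A(R)\cap\mbox{Spec}_e^B(R)$) and declares the bound straightforward; you have merely made explicit the chain-by-chain comparison of effective heights and the supremum bookkeeping that the paper leaves implicit. Your extra care on the ``in particular'' clause, invoking the triplet hypothesis to guarantee a nonempty effective spectrum, is a sound (if tacit in the paper) finishing touch.
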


\begin{proof} It is straightforward from Theorem 4.2 as
Spec$_e^{A\otimes_RB}(R)=$ Spec$_e^A(R)\cap$
Spec$_e^B(R)$.\end{proof}

The next proposition allows us to give lower and upper bounds of the
Krull dimension of tensor products of algebras over a ring $R$ in
terms of the Krull dimension of its fibre rings and the effective
Krull dimension of its components.

\begin{prop} Let $(R,A,B)$ be a triplet of rings. Then\\
1) f-dim$_R(A\otimes_RB)\leq$ dim$(A\otimes_RB)\leq$
f-dim$_R(A\otimes_RB)+(1+$f-dim$_R(A\otimes_RB))$dim$_e^{A\otimes_RB}(R)$.\\
2) f-dim$_A(A\otimes_RB)\leq$ dim$(A\otimes_RB)\leq$
f-dim$_A(A\otimes_RB)+(1+$f-dim$_A(A\otimes_RB))$dim$_e^{A\otimes_RB}(A)$.\end{prop}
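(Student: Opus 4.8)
The plan is to obtain both double inequalities as immediate specializations of the general bound recorded in Theorem 2.9(1), applied to two different choices of base ring. The key observation is that the tensor product $A\otimes_RB$ carries two algebra structures simultaneously: it is an $R$-algebra via $f_{A\otimes_RB}$ and an $A$-algebra via $\mu_A$, as displayed in the commutative diagram (D). Since $(R,A,B)$ is a triplet, $A\otimes_RB\neq\{0\}$, so all the invariants appearing in the statement are those of a genuine (nonzero) algebra, and Theorem 2.9(1) may be invoked in each setting.

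For part (1), I would simply apply Theorem 2.9(1) to the ring $R$ and the $R$-algebra $A\otimes_RB$. This yields at once
$$\mbox{f-dim}_R(A\otimes_RB)\leq\mbox{dim}(A\otimes_RB)\leq\mbox{f-dim}_R(A\otimes_RB)+(1+\mbox{f-dim}_R(A\otimes_RB))\mbox{dim}_e^{A\otimes_RB}(R),$$
which is exactly the asserted chain of inequalities.

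For part (2), I would instead apply Theorem 2.9(1) to the ring $A$ and the $A$-algebra $A\otimes_RB$, whose structural homomorphism is $\mu_A:A\longrightarrow A\otimes_RB$ and whose fibre Krull dimension is the quantity $\mbox{f-dim}_A(A\otimes_RB)$ described in the Remark following Lemma 4.1. This gives
$$\mbox{f-dim}_A(A\otimes_RB)\leq\mbox{dim}(A\otimes_RB)\leq\mbox{f-dim}_A(A\otimes_RB)+(1+\mbox{f-dim}_A(A\otimes_RB))\mbox{dim}_e^{A\otimes_RB}(A),$$
again precisely the claimed bound.

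Since there is no genuine computational obstacle here, the only point deserving care is bookkeeping: one must keep track of which base ring ($R$ or $A$) each invariant is measured against, so that the fibre Krull dimension and the effective Krull dimension in each line refer to the correct structural map. With the conventions of Definition 2.7 and of the effective spectrum fixed separately for each of the two algebra structures carried by $A\otimes_RB$, the two applications of Theorem 2.9(1) are routine and complete the proof.
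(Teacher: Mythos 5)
Your proposal is correct and matches the paper's own argument exactly: the paper's proof of this proposition is simply ``It follows from Theorem 2.9(1),'' i.e.\ Theorem 2.9(1) applied to $A\otimes_RB$ viewed first as an $R$-algebra via $f_{A\otimes_RB}$ and then as an $A$-algebra via $\mu_A$. Your careful bookkeeping of which base ring each invariant refers to is precisely the implicit content of that one-line proof.
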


\begin{proof} It follows from Theorem 2.9(1).\end{proof}

In light of Proposition 4.5, when the effective Krull dimension of a
component of a tensor product with respect to this construction is
zero, the Krull dimension of the tensor product turns out to be its
own fibre Krull dimension. This result will allow us to explicit the
Krull dimension of the tensor products involving the ahead
introduced notion of fibred
AF-rings.\\

\begin{cor} Let $(R,A,B)$ be a triplet of rings.\\
1) If dim$_e^{A\otimes_RB}(R)=0$, then $$\mbox
{dim}(A\otimes_RB)=\mbox {f-dim}_R(A\otimes_RB).$$ 2) If
dim$_e^{A\otimes_RB}(A)=0$, then
$$\mbox {dim}(A\otimes_RB)=\mbox {f-dim}_A(A\otimes_RB).$$\end{cor}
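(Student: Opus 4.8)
The plan is to obtain both equalities as immediate squeezes from the two-sided bounds already recorded in Proposition 4.5, the point being that each upper bound degenerates to its matching lower bound precisely when the pertinent effective Krull dimension vanishes. No new construction is needed; all the substance already sits in Theorem 2.9 (hence in Proposition 4.5), so I expect the argument to be essentially a one-line substitution in each case.

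For part (1), I would start from Proposition 4.5(1), which gives
\[
\mbox{f-dim}_R(A\otimes_RB)\le\dim(A\otimes_RB)\le\mbox{f-dim}_R(A\otimes_RB)+\bigl(1+\mbox{f-dim}_R(A\otimes_RB)\bigr)\dim_e^{A\otimes_RB}(R).
\]
Feeding in the hypothesis $\dim_e^{A\otimes_RB}(R)=0$ kills the second summand on the right, so the upper bound collapses to $\mbox{f-dim}_R(A\otimes_RB)$ and coincides with the lower bound, yielding $\dim(A\otimes_RB)=\mbox{f-dim}_R(A\otimes_RB)$. In the degenerate case $\mbox{f-dim}_R(A\otimes_RB)=+\infty$ the lower bound alone already forces the equality, so that case needs no separate treatment.

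For part (2), the identical reasoning applies verbatim, now using Proposition 4.5(2): with $\dim_e^{A\otimes_RB}(A)=0$ the upper bound $\mbox{f-dim}_A(A\otimes_RB)+\bigl(1+\mbox{f-dim}_A(A\otimes_RB)\bigr)\dim_e^{A\otimes_RB}(A)$ reduces to $\mbox{f-dim}_A(A\otimes_RB)$, which is exactly the lower bound, so $\dim(A\otimes_RB)=\mbox{f-dim}_A(A\otimes_RB)$.

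Alternatively---and this is perhaps the conceptually cleaner route---one may simply observe that $A\otimes_RB$ is itself an $R$-algebra, and is an $A$-algebra through $\mu_A$; under these identifications part (1) is nothing but Corollary 2.10 read for the $R$-algebra $A\otimes_RB$, and part (2) is Corollary 2.10 read for the $A$-algebra $A\otimes_RB$. The only point to verify on this route is that the triplet hypothesis guarantees $A\otimes_RB\ne\{0\}$, so that these are genuine nonzero algebras and Corollary 2.10 applies; but this is built into the definition of a triplet. I do not anticipate any real obstacle here, the whole difficulty having been discharged upstream in establishing the bounds of Theorem 2.9 and Proposition 4.5.
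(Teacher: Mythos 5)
Your proposal is correct and matches the paper's own proof, which likewise obtains both equalities by setting the effective Krull dimension to zero in the two-sided bounds of Proposition 4.5 (the paper's proof is exactly the one-line squeeze you describe). Your alternative route via Corollary 2.10 applied to $A\otimes_RB$ viewed as an $R$-algebra (resp.\ as an $A$-algebra through $\mu_A$) is not genuinely different either, since Corollary 2.10 and Proposition 4.5 are both immediate specializations of Theorem 2.9(1).
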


\begin{proof} It follows easily from Proposition 4.5.
\end{proof}

\section{Fibred AF-rings}

This section introduces and studies the notion of fibred AF-rings
over an arbitrary ring $R$.  This new concept extends that of
AF-ring over a field introduced by A. Wadsworth in \cite{W}.\\

\begin{defin} {\it Let $R$ be a ring.}\\
1) {\it Let $A$ be an $R$-algebra and $p\in$ Spec$_e^A(R)$. $A$ is
said to be a fibred AF-ring at $p$ if its fiber ring
$k_R(p)\otimes_RA$ is an
AF-ring over $k_R(p)$.}\\
2) {\it An $R$-algebra $A$ is said to be a fibred AF-ring over $R$
if it is a fibred AF-ring at each effective prime ideal $p$ of $R$
with respect to $A$,
 that is, if each nontrivial fibre ring $k_R(p)\otimes_RA$ is an AF-ring over $k_R(p)$.}\\
3) {\it Let $(R,A,B)$ be a triplet of rings.}

a) {\it If $p\in$ Spec$_e^A(R)\cap$ Spec$_e^B(R)$ and $A$ is a
fibred AF-ring at $p$, then $A$ is said to be a $B$-fibred AF-ring
at $p$.}

b) {\it $A$ is said to be a $B$-fibred AF-ring over $R$ if $A$ is a
$B$-fibred AF-ring at each common effective prime ideal $p$ of $R$
with respect to $A$ and $B$}.\end{defin}

\noindent {\it Remark.} 1) From this definition, it is clear that
the notion of a fibred AF-ring extends that of an AF-ring introduced
by Wadsworth as any algebra $A$ over a field $k$ possesses only one fiber ring over $k$ which is $A$ itself.\\
2) Note that the notion of $B$-fibred AF-ring is inherent to the
considered triplet of rings $(R,A,B)$ and it is easy to see that the
following assertions are equivalent:

a) $A$ is a fibred AF-ring over $R$;

c) $A$ is a $B$-fibred AF-ring over $R$ for any $R$-algebra $B$ such
that $(R,A,B)$ is a triplet;

c) $A$ is an $R$-fibred AF-ring over $R$.\\

Let $k$ be a field and $A$ be a $k$-algebra. Recall that, for each
prime ideal $P$ of $A$, $ht(P)+$t.d.$\Big (\displaystyle {\frac
AP}:k\Big )\leq$ t.d.$(A_P:k)$ and $A$ is said to be an AF-ring if
this inequality turns out to be an equality, that is,
$ht(P)+$t.d.$\Big (\displaystyle {\frac AP}:k\Big )=$ t.d.$(A_P:k)$
for each prime ideal $P$ of $A$. Next, we show that these properties
translate into local data for algebras over an arbitrary ring $R$.

\begin{prop} {\it Let $R$ be a ring and let $p$ be a prime ideal of
$R$. Let $A$ be an $R$-algebra.\\
1) If $P$ is a prime ideal of $A$ such that $P\cap R=p$, then
$$ht_p(P)+\mbox {t.d.}_p\Big (\displaystyle {\frac AP}:R\Big
)\leq\mbox { t.d.}_p(A_P:R).$$ 2) Let $B$ be an $R$-algebra such
that $(R,A,B)$ is a triplet of rings. Assume that $p\in$
Spec$_e^A(R)$ (resp., $p\in$ Spec$_e^A(R)\cap$ Spec$_e^B(R)$). Then
$A$ is a fibred AF-ring (resp., $B$-fibred AF-ring) at $p$ if and
only if $$ht_p(P)+\mbox {t.d.}_p\Big (\frac {A}{P}:R\Big )=\mbox {
t.d.}_p(A_P:R)$$ for each prime ideal $P$ of $A$ such that $p=P\cap
R$.}\end{prop}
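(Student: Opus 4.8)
The plan is to reduce everything to the fiber ring $k_R(p)\otimes_RA$ over the field $k_R(p)$, where the classical definitions of height, transcendence degree, and the altitude formula apply directly, and then translate each quantity via the dictionary already established in Lemma 2.2 and Proposition 3.2.

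First I would prove part (1), the inequality. Fix a prime $P$ of $A$ with $P\cap R=p$, and write $K:=k_R(p)$ and $\mathcal{P}:=k_R(p)\otimes_RP$, which by Lemma 2.2(2) is a prime ideal of the fiber ring $k_R(p)\otimes_RA$ lying over $P$. Since $K$ is a field, the classical inequality $ht(\mathcal{P})+\mbox{t.d.}\big(\frac{k_R(p)\otimes_RA}{\mathcal{P}}:K\big)\leq \mbox{t.d.}\big((k_R(p)\otimes_RA)_{\mathcal{P}}:K\big)$ holds for the $K$-algebra $k_R(p)\otimes_RA$. The whole task is then to identify each of the three terms with its local counterpart over $R$. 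By Corollary 2.8(2), $ht(\mathcal{P})=ht_p(P)$. By Lemma 2.2(5) and Proposition 3.2(1), the middle term equals $\mbox{t.d.}_p\big(\frac AP:R\big)$. For the right-hand term, Lemma 2.2(4) gives $(k_R(p)\otimes_RA)_{\mathcal{P}}\cong k_R(p)\otimes_RA_P$, whose transcendence degree over $K$ is exactly $\mbox{t.d.}_p(A_P:R)$ by the definition in 3.1(1). Substituting these three identifications into the classical inequality yields precisely $ht_p(P)+\mbox{t.d.}_p\big(\frac AP:R\big)\leq\mbox{t.d.}_p(A_P:R)$.

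For part (2), I would run exactly the same translation but now keep track of the equality case. By Definition 5.1(1), $A$ is a fibred AF-ring at $p$ means $k_R(p)\otimes_RA$ is an AF-ring over $K$, which by Wadsworth's definition is the statement that the classical inequality is an equality at \emph{every} prime of $k_R(p)\otimes_RA$. Using Lemma 2.2(2), the primes of the fiber ring are exactly the $k_R(p)\otimes_RP$ for $P\in\mbox{Spec}_p(A)$, so the AF-ring condition is equivalent to equality holding simultaneously for all such $P$. Applying the three identifications from part (1) to each of these primes converts ``equality at $\mathcal{P}$ for all $\mathcal{P}\in\mbox{Spec}(k_R(p)\otimes_RA)$'' into ``$ht_p(P)+\mbox{t.d.}_p\big(\frac AP:R\big)=\mbox{t.d.}_p(A_P:R)$ for all $P\in\mbox{Spec}_p(A)$,'' which is exactly the asserted characterization. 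The resp.\ clause for $B$-fibred AF-rings is immediate, since by Definition 5.1(3a) being a $B$-fibred AF-ring at $p$ is by definition being a fibred AF-ring at $p$ together with the hypothesis $p\in\mbox{Spec}_e^A(R)\cap\mbox{Spec}_e^B(R)$, so the same equivalence applies verbatim.

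I do not expect a serious obstacle here; the content is entirely bookkeeping, matching each invariant to its fiber-ring avatar. The one point requiring care is that the equivalence in (2) quantifies over \emph{all} primes of the fiber ring, so I must invoke Lemma 2.2(2) to guarantee that $\{k_R(p)\otimes_RP:P\in\mbox{Spec}_p(A)\}$ exhausts $\mbox{Spec}(k_R(p)\otimes_RA)$ — otherwise one might only obtain equality on a subset of primes and fail to conclude the full AF-ring property. Granting that surjectivity, the forward and backward implications are both just the substitution carried out primewise.
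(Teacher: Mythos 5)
Your proof is correct and follows essentially the same route as the paper: both reduce to the classical inequality (resp.\ the AF-ring equality) for the fiber ring $k_R(p)\otimes_RA$ over the field $k_R(p)$, and then translate $ht$, the quotient, and the localization via Corollary 2.8(2) and Lemma 2.2((4),(5)), with Lemma 2.2(2) ensuring the primes $k_R(p)\otimes_RP$, $P\in\mbox{Spec}_p(A)$, exhaust $\mbox{Spec}(k_R(p)\otimes_RA)$. Your explicit remark on that exhaustion point is the only place the paper is terser (it calls part (2) ``direct from Definition 5.1''), and it is a sound precaution rather than a deviation.
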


\begin{proof} 1) Let $P$ be a prime ideal of $A$ such that
$P\cap R=p$. Then, by Lemma 2.2,
$$\begin{array}{lll}ht_p(P)+\mbox {t.d.}_p\Big (\displaystyle {\frac
AP}:R\Big )&=&ht(k_R(p)\otimes_RP)+\mbox
{t.d.}\Big (k_R(p)\otimes_R\displaystyle {\frac AP}:k_R(p)\Big )\\
&=&ht(k_R(p)\otimes_RP)+\mbox
{t.d.}\Big (\displaystyle {\frac {k_R(p)\otimes_RA}{k_R(p)\otimes_RP}}:k_R(p)\Big )\\
&\leq&\mbox {t.d.}\Big
((k_R(p)\otimes_RA)_{k_R(p)\otimes_RP}:k_R(p)\Big )\\
&=&\mbox { t.d.}\Big (k_R(p)\otimes_RA_P:k_R(p)\Big
)\\
&=&\mbox { t.d.}_p(A_P:R),\end{array}$$as desired.\\
2) It is direct from Definition 5.1 taking into account the
following equalities which figure in Lemma 2.2:  t.d.$_p\Big
(\displaystyle {\frac AP}:R\Big )=$ t.d.$\Big (\displaystyle {\frac
{k_R(p)\otimes_RA}{k_R(p)\otimes_RP}:k_R(p)\Big )}$ and $\mbox {
t.d.}_p(A_P:R)=\mbox {t.d.}\Big
(k_R(p)\otimes_RA)_{k_R(p)\otimes_RP}:k_R(p)\Big )$ for each prime
ideal $P$ of $A$ with $p:=P\cap R$.\end{proof}

The following result exhibits various classes of fibred AF-rings.\\

\begin{prop} 1) If $k$ is a field,
then the fibred AF-rings over $k$ are exactly the
AF-rings over $k$.\\
2) Any finitely generated $R$-algebra $R[x_1,x_2,...,x_n]$ is a
fibred AF-ring over
$R$.\\
3) Let $(R,A,B)$ be a triplet of rings. If dim$_e^{A\otimes_RB}(A)=0$, then $A$ is a $B$-fibred AF-ring over $R$.\\
4) Any zero-dimensional ring $A$ which is an $R$-algebra is a fibred
AF-ring over $R$.\\
\end{prop}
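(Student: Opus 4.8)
The plan is to prove each of the four assertions in Proposition 5.4 by reducing to the defining property of a fibred AF-ring (Definition 5.1) and invoking the structural results on fibre rings already established, chiefly Lemma 4.1, Corollary 4.6, and Wadsworth's original theory of AF-rings over a field. The four parts are of increasing subtlety, so I would dispatch the easy ones first and reserve the genuine work for the classes in (2) and (4).

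For part (1), the argument is essentially definitional: a $k$-algebra $A$ over a field $k$ has a single effective prime ideal, namely $(0)$, with $k_k((0))\otimes_k A\cong A$, so being a fibred AF-ring at $(0)$ is literally the same as being an AF-ring over $k$. This is exactly the content already flagged in the Remark following Definition 5.1, and I would simply record it. For part (3), I would apply Corollary 4.6(2): the hypothesis $\dim_e^{A\otimes_R B}(A)=0$ forces, via the effective spectrum machinery of Section 4, that for each common effective prime $p$ the relevant fibre structure collapses to dimension zero, making the AF-equality in Proposition 5.3(2) hold trivially. I expect (3) to follow formally once the zero-dimensionality is correctly transported through the identifications of Lemma 4.1.

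The two substantive parts are (2) and (4). For (2), the key step is to compute the fibre ring of a polynomial extension: by Lemma 2.2(1) and the behaviour of localization and quotients on polynomial rings, one obtains
$$k_R(p)\otimes_R R[x_1,\dots,x_n]\cong k_R(p)[x_1,\dots,x_n]$$
(exactly as in the computation inside the proof of Corollary 2.12). Since a polynomial ring in finitely many variables over a field is a finitely generated $k_R(p)$-algebra, it is an AF-ring over $k_R(p)$ by Wadsworth's result that finitely generated $k$-algebras satisfy the altitude formula (cited in the Introduction). Thus $A=R[x_1,\dots,x_n]$ is a fibred AF-ring at every effective prime, hence over $R$. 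For a general finitely generated $R$-algebra $R[x_1,\dots,x_n]$ (with the $x_i$ possibly algebraic), I would note the fibre ring is a finitely generated $k_R(p)$-algebra as a quotient of the polynomial fibre ring, and finitely generated algebras over a field are AF-rings.

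For part (4), where $A$ is a zero-dimensional $R$-algebra, the **hard part will be** verifying that each nontrivial fibre ring $k_R(p)\otimes_R A$ is an AF-ring over $k_R(p)$ despite $A$ itself carrying no field structure. The route I would take is to show each such fibre ring is itself zero-dimensional: by Corollary 2.10 (or Corollary 4.6) combined with $\dim A=0$, every fibre ring $k_R(p)\otimes_R A$ has Krull dimension $0$, since its spectrum corresponds bijectively (Corollary 2.8(1)) to the set of primes of $A$ contracting to $p$, among which there can be no proper inclusion when $\dim A=0$. For a zero-dimensional $k$-algebra $C$ over a field $k$, the altitude equality $ht(P)+\mathrm{t.d.}(C/P:k)=\mathrm{t.d.}(C_P:k)$ holds for every prime $P$ because $ht(P)=0$, $C_P=C/P$ is a field extension of $k$, and the two transcendence degrees coincide. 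Hence each fibre ring is an AF-ring over $k_R(p)$, so $A$ is a fibred AF-ring over $R$. The main obstacle throughout is keeping the chain of identifications from Lemma 2.2 and Lemma 4.1 straight so that height and transcendence-degree data transfer faithfully between $A$ and its fibre rings; once those identifications are in place, each of the four verifications reduces to a known fact about AF-rings over a field.
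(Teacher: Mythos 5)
Your parts (1) and (2) match the paper's proof essentially verbatim: (1) is the definitional observation already recorded in the remark after Definition 5.1, and (2) is the same computation $k_R(p)\otimes_R R[X_1,\dots,X_n]\cong k_R(p)[X_1,\dots,X_n]$ followed by passage to a quotient $k_R(p)[X_1,\dots,X_n]/Ik_R(p)[X_1,\dots,X_n]$ for a general finitely generated algebra, with the same appeals to Wadsworth. For (3) and (4) you invert the paper's logical order, and your route is genuinely different but sound. The paper proves (3) directly by verifying the altitude equality of Proposition 5.2: it uses Theorem 4.2(3) to see that every $Q\in\mathrm{Spec}_p(A)$, for $p$ a common effective prime, is effective with respect to $A\otimes_RB$, so that $\dim_e^{A\otimes_RB}(A)=0$ forces $Q=P$ in the supremum of Proposition 3.2(3) (giving $\mathrm{t.d.}_p(A_P:R)=\mathrm{t.d.}_p(A/P:R)$) and forces $ht_p(P)=0$; it then deduces (4) from (3) by taking $B=R$ and invoking the equivalence between ``fibred AF-ring'' and ``$R$-fibred AF-ring'' from the remark after Definition 5.1. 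You instead prove the standalone fact that a zero-dimensional algebra over a field is an AF-ring and observe that the relevant fibre rings are zero-dimensional (via the order-preserving bijection of Corollary 2.8(1)); this gives (4) directly, independently of (3) and of the triplet machinery, and the same zero-dimensionality observation also yields (3). Your route isolates a reusable lemma; the paper's route proves the sharper local statement at each prime by hand and gets (4) for free.

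Two fixable inaccuracies. First, in (3) you cite Corollary 4.6(2), which computes $\dim(A\otimes_RB)$ as a fibre dimension and does not bear on the AF-equality; the tools you actually need are Theorem 4.2(3) (primes of $A$ lying over a common effective prime $p\in\mathrm{Spec}_e^A(R)\cap\mathrm{Spec}_e^B(R)$ are effective with respect to $A\otimes_RB$) together with Corollary 2.8(3), which jointly show $\dim(k_R(p)\otimes_RA)=\dim_p(A)=0$. Second, in (4) your claim that $C_P=C/P$ is a field extension is false for a general zero-dimensional $k$-algebra (take $C=k[x]/(x^2)$ and $P=(x)$: then $C_P=C$ is not a field); the conclusion nevertheless survives, since $C_P$ has the unique prime $PC_P$ with residue field $C/P$, and the paper's convention $\mathrm{t.d.}(C_P:k)=\sup\{\mathrm{t.d.}(C_P/QC_P:k):Q\in\mathrm{Spec}(C),\,Q\subseteq P\}$ therefore gives $\mathrm{t.d.}(C_P:k)=\mathrm{t.d.}(C/P:k)$, so the altitude equality $ht(P)+\mathrm{t.d.}(C/P:k)=\mathrm{t.d.}(C_P:k)$ holds as you intended. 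With these two repairs your argument is complete and correct.
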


\begin{proof} 1) It is straightforward.\\
2) Let $A=R[X_1,X_2,\cdots,X_n]$ be a polynomial ring in
$n$-variables over $R$. Let $p\in$ Spec$(R)$. Then
$k_R(p)\otimes_RA\cong k_R(p)[X_1,X_2,...,X_n]$ is clearly an
AF-ring over $k_R(p)$ \cite[Corollary 3.2]{W}. Hence $A$ is a fibred
AF-ring over $R$. Now, let $A=R[x_1,x_2,\cdots,x_n]$ be any finitely
generated $R$-algebra and let $p\in$ Spec$(R)$. Then $A\cong
\displaystyle {\frac {R[X_1,X_2,\cdots,X_n]}I}$ for some ideal $I$
of $R[X_1,X_2,\cdots,X_n]$ and thus, by Lemma 2.2,
$$\begin{array}{lll} k_R(p)\otimes_RA&\cong&
k_R(p)\otimes_R\displaystyle {\frac {R[X_1,X_2,\cdots,X_n]}I}\\
&&\\
&\cong&(k_R(p)\otimes_RR[X_1,X_2,\cdots,X_n])\otimes_{R[X_1,X_2,\cdots,X_n]}\displaystyle
{\frac {R[X_1,X_2,\cdots,X_n]}I}\\
&&\\
&\cong& \displaystyle {\frac
{k_R(p)[X_1,X_2,\cdots,X_n]}{Ik_R(p)[X_1,X_2,\cdots,X_n]}}:=k_R(p)[y_1,y_2,\cdots,y_n]\mbox
{, where }y_i:=\overline {X_i}\\
&&\mbox { for each }i=1,2,\cdots,n,\end{array}$$ is a finitely
generated $k_R(p)$-algebra which is an AF-ring over
$k_R(p)$ by \cite[page 395]{W}. Hence $A$ is a fibred AF-ring over $R$ proving (2).\\
3) Let $(R,A,B)$ be a triplet of rings such that
dim$_e^{A\otimes_RB}(A)=0$. Let\\
$p\in$ Spec$_e^A(R)\cap$ Spec$_e^B(R)=$ Spec$_e^{A\otimes_RB}(R)$.
Let $P$ be a prime ideal of $A$ such that $P\cap R=p$. Then, by
Proposition 3.2,
$$\begin{array}{lll}\displaystyle {\mbox { t.d.}_p
(A_P:R)}&=&\displaystyle {\mbox { sup}\Big \{\mbox { t.d.}\Big
(\frac {A}{Q}:\frac Rp}\Big ):Q\in\mbox { Spec}_p(A)\mbox { with }
Q\subseteq P\Big \}.\end{array}$$ Let $Q\subseteq P$ be a prime
ideal of $A$ with $Q\cap R=p$. Then, as $p\in$
Spec$_e^{A\otimes_RB}(R)$, by Theorem 4.2(3), $Q\in$
Spec$_e^{A\otimes_RB}(A)$. Now, since dim$_e^{A\otimes_RB}(A)=0$, we
get $Q=P$. Therefore, by Proposition 3.2,
$$\mbox { t.d.}_p(A_P:R)
=\displaystyle {\mbox { t.d.}\Big (\frac {A}{P}:\frac Rp\Big
)}=\mbox { t.d.}_p\Big (\displaystyle {\frac AP}:R\Big ).$$ Also, as
dim$_e^{A\otimes_RB}(A)=0$, ht$\Big (\displaystyle {\frac P{pA}}\Big
)=0$ since any prime ideal $Q$ such that $pA\subseteq Q\subseteq P$
is an effective prime ideal of $A$ with respect to $A\otimes_RB$
(see Theorem 4.2(3)). Then, by Corollary 2.8 , $ht_p(P)=ht\Big
(\displaystyle {\frac P{pA}}\Big )=0$. It follows that
$$ht_p(P)+ \displaystyle {\mbox { t.d.}_p\Big (\frac {A}{P}:R}\Big
)=\mbox {
 t.d.}_p(A_P:R).$$ Then, by Proposition 5.2, $A$
is a $B$-fibred AF-ring over $R$, as desired.\\
4) Note that if dim$(A)=0$, then, in particular,
dim$_e^{A\otimes_RB}(A)=0$ for any triplet $(R,A,B)$ of rings.
Hence, by (3), $A$ is a $B$-fibred AF-ring over $R$ for any
$R$-algebra $B$ such that $(R,A,B)$ is a triplet of rings. In
particular for $B=R$, $A$ is an $R$-fibred AF-ring over $R$ which
means that $A$ is a fibred AF-ring over $R$, as desired.
\end{proof}

We next establish the stability of the fibred AF-ring notion under
various type of constructions.

\begin{prop} Let $R$ be a ring and let $A$ be an $R$-algebra. Let $p$ be a prime ideal of $R$.\\
1) If $A$ is a fibred AF-ring at $p$ and $S$ is a multiplicative
subset of $A$ such that $p\in$ Spec$_e^{S^{-1}A}(R)$, then
the localization $S^{-1}A$ is a fibred AF-ring at $p$.\\
2) Let $A_1,A_2,...,A_n$ be fibred AF-rings at $p$. Then
$A_1\otimes_RA_2\otimes_R\cdots\otimes_RA_n$ is a fibred AF-ring
at $p$.\\
3) If $A$ is a fibred AF-ring at $p$, then the polynomial ring
$A[X_1,X_2,...,X_n]$ is a fibred AF-ring at $p$.
\end{prop}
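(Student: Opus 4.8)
The plan is to reduce each of the three assertions to the corresponding closure property of ordinary AF-rings over the field $k:=k_R(p)$, by passing to fibre rings via the isomorphisms of Lemma 2.2 and Lemma 4.1. Recall from Definition 5.1 that being a fibred AF-ring at $p$ means that $p\in$ Spec$_e^{\bullet}(R)$ and that the fibre ring over $p$ is a nonzero AF-ring over $k$; this is exactly what the hypotheses supply for $A$ (resp. for each $A_i$).

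For (1), set $C:=k\otimes_RA$, a nonzero AF-ring over $k$. The first step is to identify the fibre ring $k\otimes_RS^{-1}A$ with a localization of $C$. By Lemma 2.2(1), $k\otimes_RS^{-1}A\cong S_p^{-1}\big(S^{-1}A/p(S^{-1}A)\big)$; since $p(S^{-1}A)=S^{-1}(pA)$, the right-hand side is the localization of $A/pA$ at the multiplicative set generated by the image of $S_p$ together with the image $\widetilde S$ of $S$. Comparing with $C\cong S_p^{-1}(A/pA)$ yields $k\otimes_RS^{-1}A\cong\widetilde S^{-1}C$. It then remains to check that a localization of an AF-ring over $k$ is again an AF-ring: for a prime $\widetilde S^{-1}Q$ of $\widetilde S^{-1}C$ one has $ht(\widetilde S^{-1}Q)=ht(Q)$, the domain $\widetilde S^{-1}C/\widetilde S^{-1}Q$ is a localization of $C/Q$ with the same fraction field (hence the same transcendence degree over $k$), and $(\widetilde S^{-1}C)_{\widetilde S^{-1}Q}\cong C_Q$; thus the altitude equality for $C$ at $Q$ passes verbatim to $\widetilde S^{-1}C$ at $\widetilde S^{-1}Q$. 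Since $p\in$ Spec$_e^{S^{-1}A}(R)$ guarantees $\widetilde S^{-1}C\neq\{0\}$, this proves (1).

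For (2), I would induct on $n$ and so treat two factors $A_1,A_2$. By Theorem 4.2(1), $p\in$ Spec$_e^{A_1}(R)\cap$ Spec$_e^{A_2}(R)=$ Spec$_e^{A_1\otimes_RA_2}(R)$, so $p$ is effective for the tensor product and its fibre ring is nonzero. By Lemma 4.1,
$$k\otimes_R(A_1\otimes_RA_2)\cong(k\otimes_RA_1)\otimes_k(k\otimes_RA_2),$$
which is the tensor product over the field $k$ of the two AF-rings $k\otimes_RA_i$. Invoking the fact that the tensor product over a field of two AF-rings is again an AF-ring (Wadsworth \cite{W}; see also \cite{BGK1}), this is an AF-ring over $k$, so $A_1\otimes_RA_2$ is a fibred AF-ring at $p$; the general case follows by induction.

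Finally, (3) is deduced from (2) by writing $A[X_1,\dots,X_n]\cong A\otimes_RR[X_1,\dots,X_n]$. By Proposition 5.3(2) the ring $R[X_1,\dots,X_n]$ is a fibred AF-ring over $R$, and since every prime of $R$ is effective with respect to it (Proposition 2.4(2)) it is in particular a fibred AF-ring at $p$; as $A$ is one too by hypothesis, part (2) applied to the factors $A$ and $R[X_1,\dots,X_n]$ gives the claim. The genuinely substantive ingredient---and the main obstacle---is the tensor-product closure of the AF class over a field used in (2): the reductions through Lemma 2.2 and Lemma 4.1, the localization check in (1), and the deduction of (3) from (2) are all routine, and it is precisely the tensor-product step where Wadsworth's altitude-formula machinery is required.
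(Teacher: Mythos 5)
Your proposal is correct and takes essentially the same route as the paper: parts (2) and (3) match the paper's proof exactly (the isomorphism $k_R(p)\otimes_R(A_1\otimes_R\cdots\otimes_RA_n)\cong(k_R(p)\otimes_RA_1)\otimes_{k_R(p)}\cdots\otimes_{k_R(p)}(k_R(p)\otimes_RA_n)$ combined with Wadsworth's tensor-product closure \cite[Proposition 3.1]{W}, then $A[X_1,\dots,X_n]\cong R[X_1,\dots,X_n]\otimes_RA$ together with Proposition 5.3(2)). For part (1) the paper verifies the local altitude equality of Proposition 5.2 directly, computing $ht_p(S^{-1}P)$ and $\mbox{t.d.}_p$ through the isomorphisms of Lemma 2.2, which is the same computation you package as ``a localization of an AF-ring over $k_R(p)$ is again an AF-ring'' after identifying $k_R(p)\otimes_RS^{-1}A$ with a localization of $k_R(p)\otimes_RA$ --- a purely organizational difference, since both arguments rest on the invariance of height, residual transcendence degree, and local ring under localization.
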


\begin{proof} 1) Let $A$ be a fibred AF-ring at $p$ and $S$ be a
multiplicative subset of $A$ such that $k_R(p)\otimes_RS^{-1}A\neq
\{0\}$. Then, by Proposition 2.4(5), there exists $P\in$ Spec$(A)$
such that $P\cap S=\emptyset$ and $p=P\cap R$. Let $\overline S$ be
the image of $S$ via the homomorphism $A\longrightarrow
\displaystyle {\frac A{pA}}$. Observe that $pA\cap S=\emptyset$ and
$\displaystyle {\frac P{pA}}\cap \overline S=\emptyset$. Then, by
Lemma 2.2,
$$\begin{array}{lll}ht_p(S^{-1}P)+\mbox {t.d.}_p\Big (\displaystyle {\frac {S^{-1}A}{S^{-1}P}}:R\Big )&=&ht\Big ( \displaystyle {\frac
{S^{-1}P}{pS^{-1}A}\Big )+\mbox {t.d.}\Big (\frac
{S^{-1}A}{S^{-1}P}:\frac Rp\Big )}\\
&=&\displaystyle {ht\Big
(\overline S^{-1}\frac {P}{pA}\Big )+\mbox {t.d.}\Big (\frac
{A}{P}:\frac Rp\Big )}\\
&=&\displaystyle {ht\Big (\frac {P}{pA}\Big )+\mbox {t.d.}\Big
(\frac {A}{P}:\frac Rp\Big )}\\
&=&ht_p(P)+\mbox {t.d.}_p\Big (\displaystyle {\frac AP}:R\Big )\\
&=&\mbox {t.d.}_p(A_P:R)\end{array}$$as $A$ is a fibred AF-ring at
$p$. Moreover, note that t.d.$_p((S^{-1}A)_{S^{-1}P}:R)=$
t.d.$_p(A_P:R)$. It follows that $$ht_p(S^{-1}P)+\mbox {t.d.}_p\Big
(\displaystyle {\frac {S^{-1}A}{S^{-1}P}}:R\Big )=\mbox {
t.d.}_p((S^{-1}A)_{S^{-1}P}:R).$$ Hence $S^{-1}A$ is a fibred AF-ring at $p$.\\
2) Let $p$ be a prime ideal of $R$. Then
$$\begin{array}{lll}k_R(p)\otimes_R(A_1\otimes_R\cdots\otimes_RA_n)&\cong&(k_R(p)\otimes_RA_1)\otimes_{k_R(p)}(k_R(p)\otimes_R(A_2\otimes_R\cdots\otimes_RA_n))\\
&\cong&(k_R(p)\otimes_RA_1)\otimes_{k_R(p)}\cdots\otimes_{k_R(p)}(k_R(p)\otimes_RA_n).\end{array}$$
 First, as each $k_R(p)\otimes_RA_i\neq
\{0\}$,
$k_R(p)\otimes_R(A_1\otimes_RA_2\otimes_R\cdots\otimes_RA_n)\neq
\{0\}$. Hence, since each $k_R(p)\otimes_RA_i$ is an AF-ring over
$k_R(p)$, we get, by \cite[Proposition 3.1]{W},
$(k_R(p)\otimes_RA_1)\otimes_{k_R(p)}\cdots\otimes_{k_R(p)}(k_R(p)\otimes_RA_n)$
is an AF-ring over $k_R(p)$ so that
$k_R(p)\otimes_R(A_1\otimes_R\cdots\otimes_RA_n)$ is an AF-ring over
$k_R(p)$. It follows that $A_1\otimes_R\cdots\otimes_RA_n$ is a
fibred AF-ring at $p$.\\
3) It follows easily from (2) as $A[X_1,X_2,...,X_n]\cong
R[X_1,X_2,...,X_n]\otimes_RA$ and, by Proposition 5.3(2),
$R[X_1,X_2,...,X_n]$ is a fibred AF-ring at $p$.\end{proof}

\begin{cor} Let $R$ be a ring.\\
1) If $A$ is a fibred AF-ring over $R$ and $S$ is a multiplicative
subset of $A$, then
the localization $S^{-1}A$ is a fibred AF-ring over $R$.\\
2) Let $A_1,A_2,...,A_n$ be fibred AF-rings over $R$. Then
$A_1\otimes_RA_2\otimes_R\cdots\otimes_RA_n$ is a fibred AF-ring
over $R$.\\
3) If $A$ is a fibred AF-ring over $R$, then the polynomial ring
$A[X_1,X_2,...,X_n]$ is a fibred AF-ring over $R$.
\end{cor}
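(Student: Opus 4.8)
The plan is to deduce this global statement directly from the local (``at $p$'') results of Proposition 5.4 by ranging over the relevant effective prime ideals; the only real work is to verify that whenever Proposition 5.4 requires $A$ (or each $A_i$) to be a fibred AF-ring at a prime $p$, that hypothesis is automatically supplied by the inclusions among effective spectra established in Section 2 and Theorem 4.2.

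For (1), I would fix $p\in\mbox{Spec}_e^{S^{-1}A}(R)$. By Proposition 2.4(5) one has $\mbox{Spec}_e^{S^{-1}A}(R)\subseteq\mbox{Spec}_e^A(R)$, so $p\in\mbox{Spec}_e^A(R)$; since $A$ is a fibred AF-ring over $R$, it is a fibred AF-ring at $p$. Proposition 5.4(1) then yields that $S^{-1}A$ is a fibred AF-ring at $p$. As $p$ was an arbitrary effective prime of $R$ with respect to $S^{-1}A$, the localization $S^{-1}A$ is a fibred AF-ring over $R$. For (2), I would first extend Theorem 4.2(1) to $n$ factors by an immediate induction, obtaining $\mbox{Spec}_e^{A_1\otimes_R\cdots\otimes_RA_n}(R)=\bigcap_{i=1}^n\mbox{Spec}_e^{A_i}(R)$. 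Fixing $p$ in this intersection, each $A_i$ is effective at $p$ and hence, being a fibred AF-ring over $R$, is a fibred AF-ring at $p$; Proposition 5.4(2) then shows $A_1\otimes_R\cdots\otimes_RA_n$ is a fibred AF-ring at $p$, and ranging over all such $p$ gives the claim. For (3), I would use the isomorphism $A[X_1,\ldots,X_n]\cong A\otimes_RR[X_1,\ldots,X_n]$; since $R[X_1,\ldots,X_n]$ is a fibred AF-ring over $R$ by Proposition 5.3(2), part (2) applies at once. Alternatively, one checks via Theorem 4.2(1) and Proposition 2.4(2) that $\mbox{Spec}_e^{A[X_1,\ldots,X_n]}(R)=\mbox{Spec}_e^A(R)$ and invokes Proposition 5.4(3) prime by prime.

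The argument is essentially bookkeeping; the one point that genuinely needs care is the matching of effective spectra, namely confirming that every prime at which a local conclusion is needed is indeed a prime at which the local hypothesis of Proposition 5.4 holds. Concretely this reduces to the inclusion $\mbox{Spec}_e^{S^{-1}A}(R)\subseteq\mbox{Spec}_e^A(R)$ of Proposition 2.4(5) for part (1), and to the iterated intersection formula of Theorem 4.2(1) for parts (2) and (3). Once these identifications are in place there is no obstacle, and in particular no AF-ring computation is required beyond what Wadsworth's results already provide through Proposition 5.4.
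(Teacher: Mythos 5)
Your proposal is correct and is precisely the argument the paper intends: the corollary is stated without proof immediately after Proposition~5.4 because it follows by ranging over effective primes exactly as you do, using $\mbox{Spec}_e^{S^{-1}A}(R)\subseteq\mbox{Spec}_e^{A}(R)$ from Proposition~2.4(5) for (1), the (inductively extended) intersection formula of Theorem~4.2(1) for (2), and $A[X_1,\ldots,X_n]\cong A\otimes_R R[X_1,\ldots,X_n]$ with Proposition~5.3(2) for (3). Your explicit verification of the effective-spectrum matching is exactly the bookkeeping the paper leaves implicit, so nothing further is needed.
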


The following two corollaries give the $B$-fibred AF-ring versions
of the above Proposition 5.4 and Corollary 5.5. Their proofs are
straightforward.

\begin{cor} Let $(R,A,B)$ be a triplet of rings. Let $p$ be a prime ideal of $R$.\\
1) If $A$ is a $B$-fibred AF-ring at $p$ and $S$ is a multiplicative
subset of $A$ such that $p\in$ Spec$_e^{S^{-1}A}(R)$, then
the localization $S^{-1}A$ is a $B$-fibred AF-ring at $p$.\\
2) Let $A_1,A_2,...,A_n$ be $B$-fibred AF-rings at $p$. Then
$A_1\otimes_RA_2\otimes_R\cdots\otimes_RA_n$ is a
$B$-fibred AF-ring at $p$.\\
3) If $A$ is a $B$-fibred AF-ring at $p$, then the polynomial ring
$A[X_1,X_2,...,X_n]$ is a $B$-fibred AF-ring at $p$.
\end{cor}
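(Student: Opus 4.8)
The plan is to derive Corollary 5.6 directly from the corresponding statements in Proposition 5.4 and Corollary 5.5 by localizing the fibred AF-ring property at a single common effective prime ideal, rather than reproving anything from scratch. Recall that by Definition 5.1(3a), saying that $A$ is a $B$-fibred AF-ring at $p$ means precisely that $p\in$ Spec$_e^A(R)\cap$ Spec$_e^B(R)$ and that $A$ is a fibred AF-ring at $p$, i.e. that the fibre ring $k_R(p)\otimes_RA$ is an AF-ring over $k_R(p)$. Thus the $B$-fibred hypothesis differs from the plain fibred hypothesis only by the extra membership condition $p\in$ Spec$_e^B(R)$, a condition attached purely to $B$ and completely untouched by the constructions $S^{-1}A$, tensor products, and polynomial rings applied to the $A$-side. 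This is the structural observation that makes each part of the corollary a near-immediate consequence of its counterpart in Proposition 5.4.

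For part (1), first I would note that $A$ being a $B$-fibred AF-ring at $p$ gives in particular that $A$ is a fibred AF-ring at $p$ with $p\in$ Spec$_e^B(R)$. Applying Proposition 5.4(1) under the stated hypothesis that $p\in$ Spec$_e^{S^{-1}A}(R)$ yields that $S^{-1}A$ is a fibred AF-ring at $p$. Since the condition $p\in$ Spec$_e^B(R)$ is retained verbatim, Definition 5.1(3a) then says that $S^{-1}A$ is a $B$-fibred AF-ring at $p$. For part (2), I would first use the hypothesis that each $A_i$ is a $B$-fibred AF-ring at $p$ to conclude both that $p\in$ Spec$_e^B(R)$ and that each $A_i$ is a fibred AF-ring at $p$; Proposition 5.4(2) then makes $A_1\otimes_R\cdots\otimes_RA_n$ a fibred AF-ring at $p$. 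It remains only to confirm that $p\in$ Spec$_e^{A_1\otimes_R\cdots\otimes_RA_n}(R)$, which follows from Theorem 4.2(1) since $p$ lies in each Spec$_e^{A_i}(R)$; combining with $p\in$ Spec$_e^B(R)$ gives the $B$-fibred conclusion. Part (3) follows the same template from Proposition 5.4(3), using $A[X_1,\dots,X_n]\cong R[X_1,\dots,X_n]\otimes_RA$.

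The only point requiring a word of care, and the closest thing to an obstacle, is bookkeeping on the effective-spectrum memberships: one must check in each part that the new ring built from the $A$-side still admits $p$ as an effective prime of $R$ (so that the phrase "fibred AF-ring at $p$" is even meaningful for it), and that $p$ remains an effective prime of $R$ with respect to $B$ so the $B$-prefix survives. The membership condition for $S^{-1}A$ is handed to us as a hypothesis; for the tensor product it comes from Theorem 4.2(1) applied to the $A_i$'s; and for the polynomial ring it comes from Proposition 2.4(2). Since the $B$-side condition is inert throughout, each verification reduces to invoking the already-established unadorned version in Proposition 5.4 and then reattaching the clause $p\in$ Spec$_e^B(R)$. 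This is exactly why, as the statement itself remarks, the proofs are straightforward.
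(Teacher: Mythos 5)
Your proposal is correct and is precisely the argument the paper intends: the paper omits the proof as ``straightforward,'' meaning exactly the reduction you carry out, namely invoking Proposition 5.4 for the fibred AF-ring property and tracking the inert condition $p\in\mathrm{Spec}_e^B(R)$ together with the effective-spectrum memberships (via Theorem 4.2(1) for the tensor product and $A[X_1,\dots,X_n]\cong R[X_1,\dots,X_n]\otimes_RA$ for the polynomial ring). Your bookkeeping of these membership conditions, which also guarantees via Corollary 4.3 that each new pair still forms a triplet with $B$, is exactly the content the paper leaves to the reader.
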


\begin{cor} Let $(R,A,B)$ be a triplet of rings.\\
1) If $A$ is a $B$-fibred AF-ring over $R$ and $S$ is a
multiplicative subset of $A$, then
the localization $S^{-1}A$ is a $B$-fibred AF-ring over $R$.\\
2) Let $A_1,A_2,...,A_n$ be $B$-fibred AF-rings over $R$. Then
$A_1\otimes_RA_2\otimes_R\cdots\otimes_RA_n$ is a
$B$-fibred AF-ring over $R$.\\
3) If $A$ is a $B$-fibred AF-ring over $R$, then the polynomial ring
$A[X_1,X_2,...,X_n]$ is a $B$-fibred AF-ring over $R$.
\end{cor}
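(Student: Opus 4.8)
The plan is to reduce each of the three assertions to its ``at $p$'' counterpart already established in Proposition 5.4, the only genuine work being to verify that the common effective primes attached to the new construction are inherited from those of $A$ (and $B$). Recall from Definition 5.1(3) that $A$ is a $B$-fibred AF-ring over $R$ precisely when $A$ is a fibred AF-ring at every $p\in$ Spec$_e^A(R)\cap$ Spec$_e^B(R)$; consequently all three proofs share the same shape. I would fix a prime $p$ in the common effective spectrum of the new algebra and $B$, show that $p\in$ Spec$_e^A(R)\cap$ Spec$_e^B(R)$, invoke the hypothesis to conclude that $A$ is a fibred AF-ring at $p$, and then transfer this property via the matching part of Proposition 5.4.

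For (1), I would fix $p\in$ Spec$_e^{S^{-1}A}(R)\cap$ Spec$_e^B(R)$. By Proposition 2.4(5) one has Spec$_e^{S^{-1}A}(R)\subseteq$ Spec$_e^A(R)$, so $p\in$ Spec$_e^A(R)\cap$ Spec$_e^B(R)$; since $A$ is a $B$-fibred AF-ring over $R$, $A$ is a fibred AF-ring at $p$. As $p\in$ Spec$_e^{S^{-1}A}(R)$, Proposition 5.4(1) then yields that $S^{-1}A$ is a fibred AF-ring at $p$, hence a $B$-fibred AF-ring at $p$. Letting $p$ range over the common effective spectrum gives the claim.

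For (2), I would fix $p\in$ Spec$_e^{A_1\otimes_R\cdots\otimes_RA_n}(R)\cap$ Spec$_e^B(R)$. Iterating Theorem 4.2(1) gives Spec$_e^{A_1\otimes_R\cdots\otimes_RA_n}(R)=$ Spec$_e^{A_1}(R)\cap\cdots\cap$ Spec$_e^{A_n}(R)$, so $p\in$ Spec$_e^{A_i}(R)\cap$ Spec$_e^B(R)$ for every $i$; as each $A_i$ is a $B$-fibred AF-ring over $R$, each $A_i$ is a fibred AF-ring at $p$, and Proposition 5.4(2) makes $A_1\otimes_R\cdots\otimes_RA_n$ a fibred AF-ring at $p$, thus a $B$-fibred AF-ring at $p$. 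For (3), I would first observe that, since $A[X_1,\dots,X_n]\cong R[X_1,\dots,X_n]\otimes_RA$, Theorem 4.2(1) together with Proposition 2.4(2) gives Spec$_e^{A[X_1,\dots,X_n]}(R)=$ Spec$(R)\cap$ Spec$_e^A(R)=$ Spec$_e^A(R)$; hence any $p$ in the common effective spectrum of $A[X_1,\dots,X_n]$ and $B$ lies in Spec$_e^A(R)\cap$ Spec$_e^B(R)$, so $A$ is a fibred AF-ring at $p$ and Proposition 5.4(3) finishes the argument.

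The routine calculations are entirely absorbed into Proposition 5.4; the only point to watch is the bookkeeping of the effective spectra, namely the relations Spec$_e^{S^{-1}A}(R)\subseteq$ Spec$_e^A(R)$, Spec$_e^{A_1\otimes_R\cdots\otimes_RA_n}(R)=\bigcap_i$ Spec$_e^{A_i}(R)$, and Spec$_e^{A[X_1,\dots,X_n]}(R)=$ Spec$_e^A(R)$, which guarantee that the hypothesis ``$B$-fibred AF-ring over $R$'' is applicable at each relevant prime. This is the (mild) crux, and it is immediate from Proposition 2.4 and Theorem 4.2, which is exactly why the authors can call the proof straightforward.
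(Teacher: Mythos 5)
Your proposal is correct and is exactly the argument the paper intends: the paper states only that the proofs are ``straightforward,'' meaning precisely the pointwise reduction you carry out, namely fixing a common effective prime $p$, using Proposition 2.4(5), Theorem 4.2(1) and Proposition 2.4(2) to see that $p$ lies in Spec$_e^A(R)\cap\,$Spec$_e^B(R)$ (resp.\ in each Spec$_e^{A_i}(R)\cap\,$Spec$_e^B(R)$), and then applying the corresponding part of Proposition 5.4 at $p$. Your explicit bookkeeping of the effective spectra is the only nontrivial content, and you have it right.
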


\section{Krull dimension of tensor products involving fibred
AF-rings}

The goal of this section is to discuss and compute the Krull
dimension of the tensor product of algebras over $R$ involving
fibred AF-rings in various settings.\\

The following theorem allows to compute the Krull dimension of all
fibre rings of the tensor product of algebras $A$ and $B$ over a
ring $R$ in the case when $A$ is a fibred AF-ring over $R$. This
result translates Wadsworth theorem \cite[Theorem 3.7]{W} into the
general setting of tensor products over an arbitrary ring $R$. We
give the next more general version of a triplet $(R,A,B)$ of rings
such that $A$ is a $B$-fibred AF-ring over an effective prime ideal
$p$ of $R$.\\

\noindent {\it Notation.} 1) Let $A$ be a ring and $P$ be a prime
ideal of $A$. Let $n\geq 1$ be a positive integer. Then, for
easiness of notation, we denote by $A[n]$ the polynomial ring in $n$
indeterminates $A[X_1,X_2,\cdots,n]$ and by $P[n]$ the extended
prime ideal $P[X_1,X_2,\cdots,X_n]$ of $A[X_1,X_2,\cdots,X_n]$.\\
2) Let $A$ be an algebra over a field $k$. Let $0\leq d\leq s$ be
positive integers. Then, in \cite{W}, Wadsworth adopted the
following notation: $$D(s,d,A):=\mbox { sup}\Big \{ht(P[s])+\mbox
{min}\Big (s,d+\mbox {t.d.}\Big (\displaystyle {\frac AP}:k\Big
)\Big ):P\in\mbox { Spec}(A)\Big \}.$$ 3) Let $A$ be an algebra over
a ring $R$ and $p$ be a prime ideal of $R$. Let $0\leq d\leq s$ be
positive integers. Then, we adopt the following notation for a local
invariant of the above $D(s,d,A)$:
$$D_p(s,d,A):=D(s,d,k_R(p)\otimes_RA).$$\\

We begin by expliciting the local invariant $D_p(s,d,A)$ in terms of
the local invariants of the height and transcendence degree.

\begin{lem} Let $R$ be a ring. Let $A$ be an algebra over $R$ and $p$ be a prime ideal of $R$. Let $0\leq d\leq s$ be
positive integers. Then $$D_p(s,d,A)=\mbox { sup}\Big
\{ht_p(P[s])+\mbox {min}\Big (s,d+\mbox {t.d.}_p\Big (\displaystyle
{\frac AP}:R\Big )\Big ):P\in\mbox { Spec}_p(A)\Big \}.$$\end{lem}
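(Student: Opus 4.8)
We need to show that the local invariant
$D_p(s,d,A)$, defined purely as $D(s,d, k_R(p)\otimes_R A)$ using Wadsworth's original field-based formula applied to the fibre ring, equals a supremum taken over $\mathrm{Spec}_p(A)$ expressed entirely in terms of the local data $ht_p$ and $\mathrm{t.d.}_p$ attached to $A$ over $R$. So the statement is really a dictionary: translate each ingredient of Wadsworth's $D(s,d,-)$ for the fibre ring back into the corresponding local invariant of $A$.

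**Unwinding the definition.** Let me plan to write $k := k_R(p)$ and $F := k_R(p)\otimes_R A$ for brevity. By definition,
$$D_p(s,d,A) = D(s,d,F) = \sup\Big\{ ht(\mathfrak{P}[s]) + \min\Big(s, d + \mathrm{t.d.}\Big(\frac{F}{\mathfrak{P}}:k\Big)\Big) : \mathfrak{P}\in \mathrm{Spec}(F)\Big\}.$$
The whole proof is then to rewrite each of the three pieces — the indexing set $\mathrm{Spec}(F)$, the height $ht(\mathfrak{P}[s])$, and the transcendence degree $\mathrm{t.d.}(F/\mathfrak{P}:k)$ — in terms of prime ideals $P\in \mathrm{Spec}_p(A)$ and the local invariants $ht_p$ and $\mathrm{t.d.}_p$.

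**The three translation steps.** First I would invoke Corollary 2.8(1): there is an order-preserving bijective correspondence between $\mathrm{Spec}(F) = \mathrm{Spec}(k_R(p)\otimes_R A)$ and $\mathrm{Spec}_p(A)$, sending $P\mapsto k_R(p)\otimes_R P =: \mathfrak{P}$. This identifies the indexing sets, so the supremum over $\mathfrak{P}\in\mathrm{Spec}(F)$ becomes a supremum over $P\in\mathrm{Spec}_p(A)$. Second, for the transcendence-degree term, I would apply the definition $\mathrm{t.d.}_p(A/P:R) = \mathrm{t.d.}(k_R(p)\otimes_R (A/P):k_R(p))$ from Definition 3.1 together with the isomorphism of Lemma 2.2(5), namely $\dfrac{k_R(p)\otimes_R A}{k_R(p)\otimes_R P}\cong k_R(p)\otimes_R \dfrac{A}{P}$, to conclude $\mathrm{t.d.}(F/\mathfrak{P}:k) = \mathrm{t.d.}_p(A/P:R)$. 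Third, and this is the step requiring the most care, I must handle the polynomial-extended height $ht(\mathfrak{P}[s])$. The clean approach is to observe that $F[s] = (k_R(p)\otimes_R A)[X_1,\dots,X_s] \cong k_R(p)\otimes_R A[s]$ (by Proposition 5.3(2)'s computation, or directly by flatness of polynomial extension), and that under this isomorphism $\mathfrak{P}[s]$ corresponds to $k_R(p)\otimes_R (P[s])$. Then Corollary 2.8(2), applied to the $R$-algebra $A[s]$ and the prime $P[s]\in \mathrm{Spec}_p(A[s])$, gives $ht_p(P[s]) = ht(k_R(p)\otimes_R P[s]) = ht(\mathfrak{P}[s])$.

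**The main obstacle.** The delicate point is the third step: I must verify that $P[s]$ does contract to $p$ over $R$ (so that $ht_p(P[s])$ is even defined via $\mathrm{Spec}_p(A[s])$), and that the fibre-ring description of $A[s]$ matches $F[s]$ compatibly with the extended primes. Contraction is fine since $A\to A[s]$ is faithfully flat and the extended prime $P[s]$ meets $R$ exactly in $P\cap R = p$. The isomorphism $k_R(p)\otimes_R A[s]\cong (k_R(p)\otimes_R A)[s]$ carrying $k_R(p)\otimes_R P[s]$ to $(k_R(p)\otimes_R P)[s] = \mathfrak{P}[s]$ is routine tensor-product bookkeeping but must be stated to justify equating the two height computations. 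Once these three identifications are in place, the $\min$ term transcribes verbatim (since $s$ and $d$ are fixed integers), and substituting all three into the definition of $D(s,d,F)$ yields exactly the asserted supremum over $\mathrm{Spec}_p(A)$, completing the proof.
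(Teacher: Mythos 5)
Your proposal is correct and follows essentially the same route as the paper's proof: the paper likewise unwinds $D(s,d,k_R(p)\otimes_RA)$ via the bijection of Corollary 2.8 between $\mathrm{Spec}(k_R(p)\otimes_RA)$ and $\mathrm{Spec}_p(A)$, rewrites the transcendence degree through Lemma 2.2(5), and identifies $ht(\mathfrak{P}[s])$ with $ht_p(P[s])$ using the isomorphism $(k_R(p)\otimes_RA)[s]\cong k_R(p)\otimes_RA[s]$ noted just before the lemma. Your explicit verification that $P[s]$ contracts to $p$ (so that $ht_p(P[s])$ is defined) is a detail the paper leaves implicit, and is a welcome addition rather than a divergence.
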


It is worth noting that if $A$ and $B$ are algebras over a ring $R$
and $X_1,X_2,\cdots,X_n$ are indeterminates, then
$$\begin{array}{lll}(A\otimes_RB)[X_1,X_2,\cdots,X_n]&\cong&
A\otimes_RB\otimes_RR[X_1,X_2,\cdots,X_n]\\
&\cong& A[X_1,X_2,\cdots,X_n]\otimes_RB\\
&\cong& A\otimes_RB[X_1,X_2,\cdots,X_n].\end{array}$$

\begin{proof} Observe that, using Lemma 2.2 and Corollary 2.8,
$$\begin{array}{lll}D_p(s,d,A)&=&D(s,d,k_R(p)\otimes_RA)\\
&=&\mbox { sup}\Big \{ht(k_R(p)\otimes_RP)[s]+\mbox {min}\Big
(s,d+\mbox {t.d.}\Big (\displaystyle {\frac
{k_R(p)\otimes_RA}{k_R(p)\otimes_RP}}:k_R(p)\Big )\Big ):\\
&&P\in\mbox {
Spec}_p(A)\Big \}\\
&=&\mbox { sup}\Big \{ht(k_R(p)\otimes_R(P[s]))+\mbox {min}\Big
(s,d+\mbox {t.d.}\Big (k_R(p)\otimes_R\displaystyle {\frac
{A}{P}}:k_R(p)\Big )\Big ):\\
&&P\in\mbox { Spec}_p(A)\Big \}\\
&=&\mbox { sup}\Big \{ht_p(P[s])+\mbox {min}\Big (s,d+\mbox
{t.d.}_p\Big (\displaystyle {\frac {A}{P}}:R\Big )\Big ):P\in\mbox {
Spec}_p(A)\Big \},\end{array}$$ as desired.
\end{proof}

Recall that Wadsworth proved in \cite{W} that, given a field $k$, if
$A$ is an AF-domain and $B$ is any $k$-algebra, then
$$\mbox {dim}(A\otimes_kB)=D(\mbox {t.d.}(A),\mbox {dim}(A),B) \mbox {\cite[Theorem 3.7]{W}}.$$ We generalized this
result in \cite{BGK1} to AF-rings by proving that if $A$ is an
AF-ring and $B$ is any $k$-algebra, then,
$$\mbox {dim}(A\otimes_kB)=\mbox { sup}\Big \{D\Big (\mbox
{t.d.}(A_P:k),\mbox { dim}(A_P),B\Big ):P\in\mbox { Spec}(A)\Big
\}\mbox { \cite[Theorem 1.4]{BGK1}}.$$

Our first main result gives a new version of the above-cited
Wadsworth theorem in the general setting of tensor products of
algebras over an arbitrary ring $R$.

\begin{thm}  Let $(R,A,B)$ be a triplet of rings and let $p\in$ Spec$_e^A(R)\cap$ Spec$_e^B(R)$. Assume that $A$ is a $B$-fibred AF-ring
at $p$. Then
$$\mbox {dim}_p(A\otimes_RB)=\mbox { sup}\Big \{D_p\Big (\mbox
{t.d.}_p(A_I:R),ht_p(I),B\Big ):I\in\mbox { Spec}_p(A)\Big \}.$$
\end{thm}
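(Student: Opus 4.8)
The plan is to reduce the computation of the local invariant $\mathrm{dim}_p(A\otimes_RB)$ to the AF-ring version of Wadsworth's theorem over the residue field $k_R(p)$, transported through the dictionary between fibre rings and $p$-local data established earlier. First I would use Corollary 2.8(3) to replace $\mathrm{dim}_p(A\otimes_RB)$ by the genuine Krull dimension of the fibre ring $k_R(p)\otimes_R(A\otimes_RB)$. Then Lemma 4.1 rewrites this fibre ring as $(k_R(p)\otimes_RA)\otimes_{k_R(p)}(k_R(p)\otimes_RB)$, a tensor product over the field $k:=k_R(p)$. Writing $\mathcal A:=k_R(p)\otimes_RA$ and $\mathcal B:=k_R(p)\otimes_RB$, the assumption that $A$ is a $B$-fibred AF-ring at $p$ says precisely, by Definition 5.1, that $\mathcal A$ is an AF-ring over $k$, while $\mathcal B$ is an arbitrary $k$-algebra. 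Since $p\in\mathrm{Spec}_e^A(R)\cap\mathrm{Spec}_e^B(R)$, both factors are nonzero, so the hypotheses of the field-case theorem are in force.

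Next I would invoke the cited formula \cite[Theorem 1.4]{BGK1}: for an AF-ring $\mathcal A$ over $k$ and any $k$-algebra $\mathcal B$,
$$\mathrm{dim}(\mathcal A\otimes_k\mathcal B)=\sup\{D(\mathrm{t.d.}(\mathcal A_{\mathcal P}:k),\,\mathrm{dim}(\mathcal A_{\mathcal P}),\,\mathcal B):\mathcal P\in\mathrm{Spec}(\mathcal A)\}.$$
The remaining work is to translate each slot of the right-hand side into the $p$-local invariants of $A$. By Corollary 2.8(1) the primes $\mathcal P$ of $\mathcal A$ are exactly the ideals $k_R(p)\otimes_RI$ with $I\in\mathrm{Spec}_p(A)$, so the supremum is reindexed over $I\in\mathrm{Spec}_p(A)$. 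For the dimension slot, Lemma 2.2(4) gives $\mathcal A_{\mathcal P}\cong k_R(p)\otimes_RA_I$, whence $\mathrm{dim}(\mathcal A_{\mathcal P})=ht(k_R(p)\otimes_RI)=ht_p(I)$ by Corollary 2.8(2). For the transcendence-degree slot, the same isomorphism yields $\mathrm{t.d.}(\mathcal A_{\mathcal P}:k)=\mathrm{t.d.}(k_R(p)\otimes_RA_I:k_R(p))=\mathrm{t.d.}_p(A_I:R)$ by Definition 3.1. Finally, the notation $D_p(s,d,B):=D(s,d,k_R(p)\otimes_RB)$ identifies $D(\mathrm{t.d.}_p(A_I:R),ht_p(I),\mathcal B)$ with $D_p(\mathrm{t.d.}_p(A_I:R),ht_p(I),B)$, and assembling these substitutions produces the asserted equality.

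I do not expect a genuine obstacle, since the essential content is already isolated in \cite[Theorem 1.4]{BGK1} and everything else is the dictionary between $k_R(p)\otimes_R(-)$ and the $p$-local invariants supplied by Lemma 2.2 and Corollary 2.8. The only point requiring care is ensuring that all three ingredients are matched under a single, coherent bijection $\mathcal P\leftrightarrow I$: one must verify that the prime $\mathcal P=k_R(p)\otimes_RI$ used in the height identity (Corollary 2.8(2)) is the same one used in the localization isomorphism $\mathcal A_{\mathcal P}\cong k_R(p)\otimes_RA_I$ (Lemma 2.2(4)), so that the height, the transcendence degree, and the index of the supremum are all governed by the same $I\in\mathrm{Spec}_p(A)$. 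Once this coherence is recorded, the equality follows by direct substitution, and the result stands as the local, over-$R$ incarnation of the field-case theorem.
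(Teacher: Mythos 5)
Your proposal is correct and takes essentially the same route as the paper's own proof: both pass from $\dim_p(A\otimes_RB)$ to $\dim\big(k_R(p)\otimes_R(A\otimes_RB)\big)$, decompose the fibre ring via Lemma 4.1 as $(k_R(p)\otimes_RA)\otimes_{k_R(p)}(k_R(p)\otimes_RB)$, apply \cite[Theorem 1.4]{BGK1} to the AF-ring $k_R(p)\otimes_RA$, and translate each slot back into $p$-local invariants through Lemma 2.2, Corollary 2.8, and the definition of $D_p$. Your explicit check that the height, the transcendence degree, and the index of the supremum are governed by one coherent correspondence $\mathcal P=k_R(p)\otimes_RI\leftrightarrow I$ simply makes precise what the paper leaves implicit.
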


\begin{proof} Observe that $$k_R(p)\otimes_R(A\otimes_RB)\cong
(k_R(p)\otimes_RA)\otimes_{k_R(p)}(k_R(p)\otimes_RB)$$ and that
$k_R(p)\otimes_RA$ is an AF-ring over the field $k_R(p)$. Then,
using \cite[Theorem 1.4]{BGK1}, we get, by Lemma 2.2(4),
$$\begin{array}{lll} \mbox
{dim}(k_R(p)\otimes_R(A\otimes_RB))&=&\mbox
{dim}\Big ((k_R(p)\otimes_RA)\otimes_{k_R(p)}(k_R(p)\otimes_RB)\Big )\\
&=&\mbox { sup}\Big \{D\Big (\mbox
{t.d.}\Big ((k_R(p)\otimes_RA)_{k_R(p)\otimes_RI}:k_R(p)\Big ),\\
&&ht(k_R(p)\otimes_RI),k_R(p)\otimes_RB\Big ):I\in\mbox {
Spec}_p(A)\Big \}\\
&=&\mbox { sup}\Big \{D\Big (\mbox
{t.d.}_p(A_I:R),ht_p(I),k_R(p)\otimes_RB\Big ):\\
&&I\in\mbox {
Spec}_p(A)\Big \}\\
&=&\mbox { sup}\Big \{D_p\Big (\mbox {t.d.}_p(A_I:R),ht_p(I),B\Big
):\\
&&I\in\mbox { Spec}_p(A)\Big \},\end{array}$$ as desired.
\end{proof}

The following corollaries compute the Krull dimension of tensor
products involving algebras whose (effective) Krull dimension is
zero. It is clear that if dim$(A)=0$, then for any nontrivial
$A$-algebra $C$, dim$_e^C(A)=0$. Also, in Example 6.6, we record the
existence of various cases of triplets of rings $(R,A,B)$ such that
either dim$_e^{A\otimes_RB}(R)=0$ or dim$_e^{A\otimes_RB}(A)=0$.

\begin{cor} Let $(R,A,B)$ be a triplet of rings such that $A$ is a $B$-fibred AF-ring and dim$_e^{A\otimes_RB}(R)=0$ (in particular, dim$(R)=0$).
Then,
$$\mbox {dim}(A\otimes_RB)=\mbox { sup}\Big \{D_p\Big (\mbox
{t.d.}_p(A_I:R),ht_p(I),B\Big ):p\in\mbox { Spec}_e^A(R)\cap\mbox
{Spec}_e^B(R)\mbox { and }$$ $$I\in\mbox { Spec}_p(A)\Big \}.$$
\end{cor}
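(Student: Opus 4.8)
The plan is to chain together three facts already established in the excerpt: Corollary 4.6(1), the description of the fibre Krull dimension via effective primes, and Theorem 6.2. The whole argument reduces to observing that the hypothesis $\mbox{dim}_e^{A\otimes_RB}(R)=0$ lets us replace the Krull dimension of $A\otimes_RB$ by its fibre Krull dimension over $R$, after which Theorem 6.2 evaluates each local contribution.

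First I would invoke Corollary 4.6(1): since $\mbox{dim}_e^{A\otimes_RB}(R)=0$, we have $\mbox{dim}(A\otimes_RB)=\mbox{f-dim}_R(A\otimes_RB)$. By the definition of the fibre Krull dimension (Definition 2.7(2), applied to the $R$-algebra $A\otimes_RB$), this unfolds as
$$\mbox{f-dim}_R(A\otimes_RB)=\mbox{sup}\{\mbox{dim}_p(A\otimes_RB):p\in\mbox{Spec}_e^{A\otimes_RB}(R)\},$$
and Theorem 4.2(1) identifies the index set as $\mbox{Spec}_e^{A\otimes_RB}(R)=\mbox{Spec}_e^A(R)\cap\mbox{Spec}_e^B(R)$. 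Hence $\mbox{dim}(A\otimes_RB)$ is the supremum of $\mbox{dim}_p(A\otimes_RB)$ as $p$ ranges over the common effective prime ideals.

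Next, for each such $p\in\mbox{Spec}_e^A(R)\cap\mbox{Spec}_e^B(R)$, the hypothesis that $A$ is a $B$-fibred AF-ring over $R$ guarantees, by Definition 5.1, that $A$ is a $B$-fibred AF-ring at $p$; so Theorem 6.2 applies and yields $\mbox{dim}_p(A\otimes_RB)=\mbox{sup}\{D_p(\mbox{t.d.}_p(A_I:R),ht_p(I),B):I\in\mbox{Spec}_p(A)\}$. Substituting this into the previous supremum produces an iterated supremum, first over $p$ and then over $I\in\mbox{Spec}_p(A)$. Since each prime ideal $I$ of $A$ determines its contraction $p:=I\cap R$ uniquely, the pairs $(p,I)$ arising here are in natural bijection with the prime ideals $I\in\mbox{Spec}(A)$ whose contraction lies in $\mbox{Spec}_e^A(R)\cap\mbox{Spec}_e^B(R)$; thus the iterated supremum collapses into the single supremum over all such pairs, which is exactly the asserted formula.

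The argument is essentially bookkeeping, so I do not expect a genuine obstacle; the only point needing care is the collapse of the iterated supremum into a single one, where I must verify that the right-hand index set is precisely the disjoint union over $p$ of the sets $\mbox{Spec}_p(A)$. For the parenthetical special case, $\mbox{dim}(R)=0$ forces $\mbox{dim}_e^A(R)=0$ by Remark 2.6(2), whence $\mbox{dim}_e^{A\otimes_RB}(R)=0$ by Corollary 4.4, so the main hypothesis is met automatically.
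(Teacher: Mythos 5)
Your proposal is correct and follows exactly the paper's own argument: Corollary 4.6(1) reduces $\dim(A\otimes_RB)$ to $\mbox{f-dim}_R(A\otimes_RB)$, Theorem 4.2(1) identifies the index set of effective primes, and Theorem 6.2 evaluates each $\dim_p(A\otimes_RB)$. The extra details you spell out (collapsing the iterated supremum, and deriving the parenthetical case $\dim(R)=0$ via Remark 2.6(2) and Corollary 4.4) are routine verifications that the paper leaves implicit.
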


\begin{proof} As dim$_e^{A\otimes_RB}(R)=0$, by Corollary 4.6(1),
$$\begin{array}{lll}\mbox {dim}(A\otimes_RB)&=&\mbox {f-dim}_R(A\otimes_RB)\\
&=&\mbox { sup}\{\mbox {dim}_p(A\otimes_RB):p\in\mbox {
Spec}_e^{A\otimes_RB}(R)\}.\end{array}$$Then, Theorem 6.2 completes
the proof.
\end{proof}

\begin{cor} Let $(R,A,B)$ be a triplet of rings such that $A$ is a $B$-fibred AF-ring and either dim$_e^{A}(R)=0$ or dim$_e^{B}(R)=0$.
Then,
$$\mbox {dim}(A\otimes_RB)=\mbox { sup}\Big \{D_p\Big (\mbox
{t.d.}_p(A_I:R),ht_p(I),B\Big ):p\in\mbox { Spec}_e^A(R)\cap\mbox
{Spec}_e^B(R)\mbox { and }$$ $$I\in\mbox { Spec}_p(A)\Big \}.$$
\end{cor}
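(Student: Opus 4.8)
The plan is to recognize that this statement is an immediate specialization of Corollary 6.3, obtained by converting the hypothesis on the individual effective dimensions into the hypothesis on the joint effective dimension demanded there. First I would invoke Corollary 4.4. Since $(R,A,B)$ is a triplet of rings and we are given that either $\dim_e^A(R)=0$ or $\dim_e^B(R)=0$, the \emph{in particular} clause of Corollary 4.4 yields $\dim_e^{A\otimes_RB}(R)=0$. This is precisely the extra hypothesis needed to enter Corollary 6.3.

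With $\dim_e^{A\otimes_RB}(R)=0$ now established, and carrying over the standing assumptions that $(R,A,B)$ is a triplet and that $A$ is a $B$-fibred AF-ring, all hypotheses of Corollary 6.3 are met. Applying that corollary then delivers the stated formula verbatim: the index set $\{p\in \mathrm{Spec}_e^A(R)\cap \mathrm{Spec}_e^B(R),\ I\in \mathrm{Spec}_p(A)\}$ and the invariant $D_p(\mathrm{t.d.}_p(A_I:R),\, ht_p(I),\, B)$ are identical in the two statements, so no further computation is required.

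In truth there is no genuine obstacle here; the entire content is the two-step chain Corollary 4.4 $\Rightarrow$ Corollary 6.3. The only point worth isolating is why the vanishing of one factor's effective dimension forces the joint effective dimension to vanish, and this is exactly what Corollary 4.4 records through the inequality $\dim_e^{A\otimes_RB}(R)\le \min(\dim_e^A(R),\dim_e^B(R))$, itself a consequence of the identity $\mathrm{Spec}_e^{A\otimes_RB}(R)=\mathrm{Spec}_e^A(R)\cap\mathrm{Spec}_e^B(R)$ from Theorem 4.2(1).
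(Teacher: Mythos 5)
Your proposal is correct and is exactly the paper's argument: the paper's proof reads ``It is straightforward by Corollary 4.4 and Corollary 6.3,'' which is precisely your two-step chain of using $\dim_e^{A\otimes_RB}(R)\le\min(\dim_e^A(R),\dim_e^B(R))$ to get $\dim_e^{A\otimes_RB}(R)=0$ and then specializing Corollary 6.3. Your write-up merely makes the paper's terse reduction explicit, including the correct tracing back to $\mathrm{Spec}_e^{A\otimes_RB}(R)=\mathrm{Spec}_e^A(R)\cap\mathrm{Spec}_e^B(R)$ from Theorem 4.2(1).
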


\begin{proof} It is straightforward by Corollary 4.4 and Corollary 6.3.\end{proof}

We devote the following theorem to the case where one component of a
tensor product is a zero-dimensional ring.

\begin{thm} Let $(R,A,B)$ be a triplet of rings such that dim$_e^{A\otimes_RB}(A)=0$ (in particular, dim$(A)=0$). Then
$$\mbox {dim}(A\otimes_RB)=\mbox { sup}\Big
\{D_p\Big (\mbox {t.d.}_p(k_A(I):R),0,B\Big ):p\in\mbox {
Spec}_e^A(R)\cap\mbox { Spec}_e^B(R)\mbox { and }$$ $$I\in\mbox {
Spec}_p(A) \Big \}.$$
\end{thm}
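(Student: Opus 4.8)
The plan is to reduce the global Krull dimension to a supremum of dimensions of the fibre rings of $A\otimes_RB$ over $A$, and then to evaluate each such fibre ring by Wadsworth's theorem in its AF-\emph{domain} form. Since $\mbox{dim}_e^{A\otimes_RB}(A)=0$, Corollary 4.6(2) gives $\mbox{dim}(A\otimes_RB)=\mbox{f-dim}_A(A\otimes_RB)$, and by the Remark following Lemma 4.1 this equals $\mbox{sup}\{\mbox{dim}(k_A(I)\otimes_RB):I\in\mbox{Spec}_e^{A\otimes_RB}(A)\}$. Thus the entire problem reduces to computing $\mbox{dim}(k_A(I)\otimes_RB)$ for a single prime $I$ of $A$ lying in $\mbox{Spec}_e^{A\otimes_RB}(A)$.

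Next I would fix such an $I$ and set $p:=I\cap R$. Since $R/p\hookrightarrow A/I\hookrightarrow k_A(I)$, the field $k_A(I)$ is a $k_R(p)$-algebra, and associativity of the tensor product yields $k_A(I)\otimes_RB\cong k_A(I)\otimes_{k_R(p)}(k_R(p)\otimes_RB)$. Here $k_A(I)$ is a field, hence an AF-domain over $k_R(p)$ with $\mbox{dim}(k_A(I))=0$, while $k_R(p)\otimes_RB$ is a nonzero $k_R(p)$-algebra, nonzero because $p\in\mbox{Spec}_e^B(R)$ by Theorem 4.2(3). Wadsworth's theorem \cite[Theorem 3.7]{W} in the AF-domain case then gives $\mbox{dim}(k_A(I)\otimes_{k_R(p)}(k_R(p)\otimes_RB))=D(\mbox{t.d.}(k_A(I):k_R(p)),0,k_R(p)\otimes_RB)$. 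Because $k_A(I)$ is already a $k_R(p)$-algebra one has $k_R(p)\otimes_Rk_A(I)\cong k_A(I)$, so that $\mbox{t.d.}(k_A(I):k_R(p))=\mbox{t.d.}_p(k_A(I):R)$ by Definition 3.1; invoking the notation $D_p(s,d,B)=D(s,d,k_R(p)\otimes_RB)$, this produces $\mbox{dim}(k_A(I)\otimes_RB)=D_p(\mbox{t.d.}_p(k_A(I):R),0,B)$.

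It then remains to reindex the supremum, which I expect to be the only real bookkeeping point alongside the identifications $k_R(p)\otimes_Rk_A(I)\cong k_A(I)$ and $\mbox{t.d.}(k_A(I):k_R(p))=\mbox{t.d.}_p(k_A(I):R)$ needed to make Wadsworth's formula output exactly $D_p(\mbox{t.d.}_p(k_A(I):R),0,B)$. By Theorem 4.2(3) a prime $I$ of $A$ lies in $\mbox{Spec}_e^{A\otimes_RB}(A)$ precisely when $p:=I\cap R\in\mbox{Spec}_e^B(R)$, and $p\in\mbox{Spec}_e^A(R)$ holds automatically since $I$ contracts to $p$ (Proposition 2.4(1)). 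Hence $I\mapsto(I\cap R,I)$ identifies $\mbox{Spec}_e^{A\otimes_RB}(A)$ with the set of pairs $(p,I)$ for which $p\in\mbox{Spec}_e^A(R)\cap\mbox{Spec}_e^B(R)$ and $I\in\mbox{Spec}_p(A)$, turning the supremum of the first paragraph into the asserted formula and completing the argument.
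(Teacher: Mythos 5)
Your proof is correct, and its overall skeleton matches the paper's: both reduce via Corollary 4.6(2) and the Remark following Lemma 4.1 to computing $\dim(k_A(I)\otimes_RB)$ for each $I\in\mathrm{Spec}_e^{A\otimes_RB}(A)$, and both finish with the same reindexing through Theorem 4.2(3). Where you genuinely diverge is in evaluating each $\dim(k_A(I)\otimes_RB)$. The paper stays inside its own machinery: it invokes Proposition 5.3 to see that the zero-dimensional ring $k_A(I)$ is a fibred AF-ring over $R$, applies its Theorem 6.2 to the triplet $(R,k_A(I),B)$ to get the \emph{local} quantity $\dim_p(k_A(I)\otimes_RB)=D_p(\mathrm{t.d.}_p(A_I:R),ht_p(I),B)$, and then needs two further steps: showing $ht_p(I)=0$ (from $\dim_e^{A\otimes_RB}(A)=0$ and Theorem 4.2(3)), and upgrading $\dim_p$ to the global $\dim$ via the observation that $k_A(I)\otimes_RB$ has a unique fibre ring over $R$, together with Corollary 4.6(1). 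You bypass all of this by noting that $k_A(I)$ is a field extension of $k_R(p)$, writing $k_A(I)\otimes_RB\cong k_A(I)\otimes_{k_R(p)}(k_R(p)\otimes_RB)$, and applying Wadsworth's Theorem 3.7 directly to the AF-domain $k_A(I)$ over $k_R(p)$; since the unique prime of a field has height $0$, the parameters $d=0$ and $s=\mathrm{t.d.}(k_A(I):k_R(p))=\mathrm{t.d.}_p(k_A(I):R)$ come out automatically, and you obtain the global dimension in one stroke without passing through the fibres of $k_A(I)\otimes_RB$ over $R$. Your route is shorter and cleaner on a point the paper glosses over: the paper's proof writes $\mathrm{t.d.}_p(A_I:R)$ where the statement has $\mathrm{t.d.}_p(k_A(I):R)$, an identification that itself requires the $ht_p(I)=0$ argument (via Proposition 3.2(3)), whereas your argument never introduces $A_I$ at all; what the paper's route buys in exchange is self-containedness, deriving everything from its fibred-AF Theorem 6.2 rather than re-invoking Wadsworth over a field. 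The one caveat, shared equally by the paper, is that $D_p(s,0,B)$ is literally defined only for finite $s$, so the formula implicitly assumes the relevant transcendence degrees are finite (or that the notation is suitably extended); this is inherited from the conventions of Wadsworth and of the AF-ring literature, not a defect of your argument.
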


\begin{proof} As dim$_e^{A\otimes_RB}(A)=0$, by Corollary 4.6(2),
$$\begin{array}{lll}\mbox {dim}(A\otimes_RB)&=&\mbox {f-dim}_A(A\otimes_RB)\\
&=&\mbox { sup}\{\mbox {dim}(k_A(I)\otimes_RB):I\in\mbox {
Spec}_e^{A\otimes_RB}(A)\}\end{array}.$$ Let $I\in\mbox {
Spec}_e^{A\otimes_RB}(A)$ and $p:=I\cap R$. Note that, by
Proposition 5.3, $k_A(I)$, being zero-dimensional, is a fibred
AF-ring over $R$. Also, as $I\in$ Spec$_e^{A\otimes_RB}(A)$, then
$p\in$ Spec$_e^{A\otimes_RB}(R)$. Therefore, Theorem 6.2 yields
$$\mbox {dim}_p(k_A(I)\otimes_RB)=D_p(\mbox
{t.d.}_p(A_I:R),ht_p(I),B).$$ Now, since $p\in$
Spec$_e^{A\otimes_RB}(R)$, by Theorem 4.2(3), any $J\in$ Spec$_p(A)$
is an effective prime ideal of $A$ with respect to $A\otimes_RB$.
Therefore, since dim$_e^{A\otimes_RB}(A)=0$, we get
$$ht_p(I)=ht(\frac I{pA})=0.$$  Furthermore, as
dim$_e^{k_A(p)}(R)=0$, we get, by Corollary 4.4,
dim$_e^{k_A(p)\otimes_RB}(R)=0$. It follows, by Corollary 4.6(1) and
as $k_A(I)\otimes_RB$ possesses only one fibre ring which is
$k_R(p)\otimes_R(k_A(I)\otimes_RB)$, that
$$\begin{array}{lll}\mbox {dim}(k_A(I)\otimes_RB)&=&\mbox
{f-dim}_R(k_A(I)\otimes_RB)\\
&=&\mbox { dim}_p(k_A(I)\otimes_RB)\\
&=&D_p(\mbox {t.d.}_p(A_I:R),ht_p(I),B)\\
&=&D_p(\mbox {t.d.}_p(A_I:R),0,B).\end{array}$$ Consequently,
$$\mbox {dim}(A\otimes_RB)=\mbox { sup}\{D_p(\mbox
{t.d.}_p(A_I:R),0,B):p\in\mbox { Spec}_e^{A\otimes_RB}(R)\mbox { and
}I\in\mbox { Spec}_p(A)\}$$ completing the proof as, by Theorem 3.3,
Spec$_e^{A\otimes_RB}(R)=$ Spec$_e^{A}(R)\cap$ Spec$_e^B(R)$.
\end{proof}

\begin{exa} 1) Let $R:=\mathbb{Z}$ and $A$, $B$ be rings
such that $(\mathbb{Z},A,B)$ is a triplet of rings. Let char$(A)=n$
and char$(B)=m$ such that $n\neq 0$ and $m\neq 0$. Then
Spec$_e^{A\otimes_{\mathbb{Z}}B}(\mathbb{Z})=\{p\mathbb{Z}:p$ is a
common prime divisor of $n$ and $m\}$ and thus
dim$_e^{A\otimes_{\mathbb{Z}}B}(\mathbb{Z})=0$.\\
2) Let $R:=\mathbb{Z}$. Let $n\geq 1$ be an integer. Let
$A=\mathbb{Z}_{p\mathbb{Z}}+X\mathbb{Q}[[X]]$ be a $D+m$
construction issued from the local ring
$\mathbb{Q}[[X]]=\mathbb{Q}+X\mathbb{Q}[[X]]$. Let $B$ be a
$\displaystyle {\frac {\mathbb{Z}}{n\mathbb{Z}}}$-algebra and let
$p$ be a prime divisor of $n$. Then,
Spec$(A)=\{(0),XQ[[X]],p\mathbb{Z}_{p\mathbb{Z}}+XQ[[X]]\}$ and
Spec$_e^{B}(\mathbb{Z})=\{q\mathbb{Z}:q$ is a positive prime divisor
of $n\}$. Therefore
Spec$_e^{A\otimes_{\mathbb{Z}}B}(\mathbb{Z})=\{p\mathbb{Z}\}$ and
Spec$_e^{A\otimes_{\mathbb{Z}}B}(A)=\{p\mathbb{Z}_{p\mathbb{Z}}+XQ[[X]]\}$.
It follows that dim$_e^{A\otimes_{\mathbb{Z}}B}(\mathbb{Z})=0$ and
dim$_e^{A\otimes_\mathbb{Z}B}(A)=0$.\end{exa}

Next, we deal with tensor products over the ring of integers
$\mathbb{Z}$. This allows us to answer a question rised by Jorge
Martinez on evaluating the Krull dimension of the tensor product
over $\mathbb{Z}$ of two rings one of which is a Boolean ring.

\begin{cor} {\it Let $(\mathbb{Z},A,B)$ be a triplet of rings such
that char($A)=:n\neq 0$. Assume that $A$ is a $B$-fibred AF-ring
over $\mathbb{Z}$. Let
$n=p_1^{\alpha_1}p_2^{\alpha_2}...p_r^{\alpha_r}$ be the
decomposition of $n$ into prime factors. Then}
$$\mbox {dim}(A\otimes_{\mathbb{Z}}B)=\mbox { sup}\Big \{D_{p_i}\Big (\mbox
{t.d.}_{p_i}(A_I:\mathbb{Z}),ht_{p_i}(I),B\Big ):i=1,2,\cdots,n
\mbox { and }I\in\mbox { Spec}_{p_i}(A)\Big \}.$$
\end{cor}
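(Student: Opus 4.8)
The plan is to derive the formula directly from Corollary 6.4 once the effective spectrum of $\mathbb{Z}$ with respect to $A$ has been pinned down. First I would note that, since char$(A)=n\neq 0$, the structure homomorphism $f_A:\mathbb{Z}\longrightarrow A$ has kernel $n\mathbb{Z}$, so every prime of $A$ contracts to a prime of $\mathbb{Z}$ containing $n\mathbb{Z}$. By Proposition 2.4(1), Spec$_e^A(\mathbb{Z})=f_A^{-1}($Spec$(A))$; as the nonzero primes of the principal ideal domain $\mathbb{Z}$ containing $n\mathbb{Z}$ are exactly $p_1\mathbb{Z},\ldots,p_r\mathbb{Z}$, while $(0)$ cannot occur because $n\mathbb{Z}\neq(0)$, I obtain Spec$_e^A(\mathbb{Z})\subseteq\{p_1\mathbb{Z},\ldots,p_r\mathbb{Z}\}$. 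Each $p_i\mathbb{Z}$ is a maximal ideal of $\mathbb{Z}$, so the effective primes form an antichain and every chain of effective primes has length $0$; hence $ht_e^A(p_i\mathbb{Z})=0$ for all $i$ and dim$_e^A(\mathbb{Z})=0$. (The effective spectrum is nonempty, since $(\mathbb{Z},A,B)$ being a triplet gives Spec$_e^A(\mathbb{Z})\cap$ Spec$_e^B(\mathbb{Z})\neq\emptyset$ by Corollary 4.3.) With dim$_e^A(\mathbb{Z})=0$ and $A$ a $B$-fibred AF-ring by hypothesis, Corollary 6.4 applies and yields
$$\mbox{dim}(A\otimes_{\mathbb{Z}}B)=\mbox{sup}\Big\{D_p\Big(\mbox{t.d.}_p(A_I:\mathbb{Z}),ht_p(I),B\Big):p\in\mbox{Spec}_e^A(\mathbb{Z})\cap\mbox{Spec}_e^B(\mathbb{Z}),\ I\in\mbox{Spec}_p(A)\Big\}.$$

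It then remains to re-index this supremum over the set appearing in the statement, namely over all $i=1,\ldots,r$ and $I\in$ Spec$_{p_i}(A)$. The common effective primes lie among the $p_i\mathbb{Z}$, so enlarging the range of $p$ to the full set $\{p_1\mathbb{Z},\ldots,p_r\mathbb{Z}\}$ can only add terms indexed by some $p_i\mathbb{Z}$ that is not a common effective prime. If such a $p_i\mathbb{Z}\notin$ Spec$_e^A(\mathbb{Z})$, then Spec$_{p_i}(A)=\emptyset$ and the index contributes nothing. If instead $p_i\mathbb{Z}\in$ Spec$_e^A(\mathbb{Z})$ but $p_i\mathbb{Z}\notin$ Spec$_e^B(\mathbb{Z})$, then $k_{\mathbb{Z}}(p_i\mathbb{Z})\otimes_{\mathbb{Z}}B=\{0\}$, whence Spec$_{p_i}(B)=\emptyset$ and, by Lemma 6.1, $D_{p_i}(\,\cdot\,,\,\cdot\,,B)$ is a supremum over the empty set, again contributing nothing to the overall supremum (which is attained at some genuine common effective prime). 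Consequently the two supremums coincide, which is exactly the announced identity.

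The only delicate point I foresee is the bookkeeping with these degenerate terms: one must verify that passing from the index set of common effective primes to the full set of prime divisors $p_1,\ldots,p_r$ of $n$ introduces only vacuous contributions, which rests on reading $D_{p_i}$ at a trivial fibre ring as a supremum over the empty set via Lemma 6.1, and on the existence of at least one genuine term guaranteed by Corollary 4.3. Apart from this, the argument is a direct specialization of the machinery already developed, principally Corollary 6.4 together with Proposition 2.4(1).
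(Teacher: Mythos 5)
Your proof is correct and follows essentially the same route as the paper: both reduce to Corollary 6.4 by establishing $\mbox{dim}_e^A(\mathbb{Z})=0$ from the fact that every prime of $A$ contracts to one of the maximal ideals $p_1\mathbb{Z},\ldots,p_r\mathbb{Z}$ of $\mathbb{Z}$ containing $n\mathbb{Z}=\mbox{Ker}(f_A)$. The only (harmless) divergence is that the paper proves the full equality $\mbox{Spec}_e^A(\mathbb{Z})=\{p_1\mathbb{Z},\ldots,p_r\mathbb{Z}\}$ via lying over for the minimal primes of $\mathbb{Z}/n\mathbb{Z}\hookrightarrow A$ and leaves the re-indexing of the supremum implicit, whereas you settle for the containment and instead spell out the (correct) bookkeeping that any $p_i\mathbb{Z}$ ineffective with respect to $A$ or to $B$ contributes only vacuous terms, read as suprema over empty sets via Lemma 6.1.
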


\begin{proof} Observe that, as char$(A)=n$, $\displaystyle {\frac
{\mathbb{Z}}{n\mathbb{Z}}}$ is identified to a subring of $A$ and
that Spec$\Big (\displaystyle {\frac {\mathbb{Z}}{n\mathbb{Z}}}\Big
)=\Big \{\displaystyle {\frac
{p_1\mathbb{Z}}{n\mathbb{Z}}},\displaystyle {\frac
{p_2\mathbb{Z}}{n\mathbb{Z}}},\cdots,\displaystyle {\frac
{p_r\mathbb{Z}}{n\mathbb{Z}}}\Big \}.$ Then, for each
$j=1,2,\cdots,r$, $\displaystyle {\frac
{p_j\mathbb{Z}}{n\mathbb{Z}}}$ is a minimal prime ideal of
$\displaystyle {\frac {\mathbb{Z}}{n\mathbb{Z}}}$ and thus there
exists $I_j\in$ Spec$(A)$ such that $I_j\cap\displaystyle {\frac
{\mathbb{Z}}{n\mathbb{Z}}}=\displaystyle {\frac
{p_j\mathbb{Z}}{n\mathbb{Z}}}$. Hence, for each $j=1,2,\cdots,r$,
there exists $I_j\in$ Spec$(A)$ such that $I_j\cap
\mathbb{Z}=p_j\mathbb{Z}$. Therefore Spec$_{e}^A(\mathbb{Z})=
\{p_1\mathbb{Z},p_2\mathbb{Z},\cdots,p_r\mathbb{Z}\}$). Thus
dim$_e^A(\mathbb{Z})=0$. Now, Corollary 6.4 completes the
proof.\end{proof}

We close with the following corollary which presents an answer to a
question of Jorge Martinez on evaluating the Krull dimension of the
tensor product over the ring of integers $\mathbb{Z}$ of two rings
one of which
is Boolean.\\

First, we record the following well known characteristics of Boolean
rings.

\begin{lem} Let $R$ be a Boolean ring. Then\\
1) $R$ is commutative.\\
2) char$(R)=2$.\\
3) dim$(R)=0.$\\
4) $\displaystyle {\frac Rp\cong \frac {\mathbb{Z}}{2\mathbb{Z}}}$
for each prime ideal $p$ of $R$.\\
5) $\displaystyle {\frac Rp}$ is an algebraic field extension of
$\displaystyle {\frac {\mathbb{Z}}{2\mathbb{Z}}}$ for each prime
ideal $p$ of $R$.\end{lem}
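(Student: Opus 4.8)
The plan is to work directly from the defining identity of a Boolean ring, namely $x^2=x$ for every $x\in R$, and to deduce all five assertions from it. I would first treat (2): since every element is idempotent, so is $x+x=2x$, giving on one hand $(2x)^2=2x$ and on the other $(2x)^2=4x^2=4x$; comparing yields $4x=2x$, hence $2x=0$ for every $x\in R$. Taking $x=1_R$ shows char$(R)=2$ (the ring being nontrivial). With $2z=0$ in hand I would then obtain (1): expanding the idempotent identity $(x+y)^2=x+y$ in the a priori noncommutative setting gives $x^2+xy+yx+y^2=x+y$, and cancelling the idempotent terms $x^2=x$, $y^2=y$ leaves $xy+yx=0$, so $xy=-yx=yx$ by the relation just established. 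Thus $R$ is commutative. I record this classical argument for completeness even though commutativity is already part of the paper's standing hypothesis.

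For (3) and (4) I would pass to the residue rings. Fix a prime ideal $p$ of $R$; then $R/p$ is an integral domain in which every element is still idempotent, so each $\overline x\in R/p$ satisfies $\overline x(\overline x-\overline 1)=\overline 0$. As $R/p$ has no zero-divisors this forces $\overline x=\overline 0$ or $\overline x=\overline 1$, whence $R/p=\{\overline 0,\overline 1\}$ is the two-element field and $R/p\cong\mathbb{Z}/2\mathbb{Z}$, which is (4). Since $R/p$ is a field, $p$ is maximal; as $p$ was an arbitrary prime, every prime of $R$ is maximal, i.e. $\dim(R)=0$, which is (3).

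Finally, (5) is immediate from (4): the field $R/p\cong\mathbb{Z}/2\mathbb{Z}$ coincides with its own prime subfield $\mathbb{Z}/2\mathbb{Z}$ and is therefore (trivially) an algebraic field extension of it. The only mildly delicate point in the whole argument is the logical ordering---establishing $2z=0$ before commutativity so that $-yx$ may legitimately be replaced by $yx$---together with the key observation that passing to a prime quotient converts the idempotent law into the equation $\overline x(\overline x-\overline 1)=\overline 0$ in a domain; everything else is routine verification.
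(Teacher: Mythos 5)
Your proof is correct in every step, including the one genuinely delicate point you flag yourself: deriving $2x=0$ from $(2x)^2=2x$ \emph{before} expanding $(x+y)^2$, so that the cancellation $xy=-yx=yx$ is legitimate even without assuming commutativity, and then obtaining (3)--(5) by observing that idempotence in the domain $R/p$ forces $\overline x(\overline x-\overline 1)=\overline 0$, hence $R/p=\{\overline 0,\overline 1\}\cong \mathbb{Z}/2\mathbb{Z}$. The paper itself states this lemma without proof, recording it as a list of well-known facts, and your argument is precisely the standard classical one that this citation-free statement implicitly invokes, so there is nothing to compare beyond noting that you have supplied the omitted verification in full.
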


\begin{cor} Let $(\mathbb{Z},A,B)$ be a triplet of rings such that $A$ is a Boolean ring. Then
$$\begin{array}{lll}\mbox {dim}(A\otimes_{\mathbb{Z}}B)&=&\mbox {dim}\Big
(\displaystyle {\frac B{2B}}\Big ).\end{array}$$
\end{cor}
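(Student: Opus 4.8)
The plan is to specialize Corollary 6.8 to the Boolean setting and watch the supremum collapse. First I would extract from Lemma 6.9 the three facts that drive the computation: $A$ is commutative with $\mbox{char}(A)=2$, $\mbox{dim}(A)=0$, and $\frac{A}{I}\cong\frac{\mathbb{Z}}{2\mathbb{Z}}$ for every prime ideal $I$ of $A$. Since $\mbox{dim}(A)=0$, Proposition 5.3(4) makes $A$ a ($B$-)fibred AF-ring over $\mathbb{Z}$; combined with $\mbox{char}(A)=2\neq 0$, this is precisely the hypothesis of Corollary 6.8. Because $\mbox{char}(A)=2$ forces $\mbox{Ker}(f_{A})=2\mathbb{Z}$, every prime of $A$ contracts to $2\mathbb{Z}$, so $\mbox{Spec}_{2}(A)=\mbox{Spec}(A)\neq\emptyset$ and the prime factorization of the characteristic is simply $2=2^{1}$ (a single prime $p_{1}=2$). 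Corollary 6.8 then reads
$$\mbox{dim}(A\otimes_{\mathbb{Z}}B)=\mbox{sup}\Big\{D_{2}\big(\mbox{t.d.}_{2}(A_{I}:\mathbb{Z}),\,ht_{2}(I),\,B\big):I\in\mbox{Spec}_{2}(A)\Big\}.$$

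Next I would show that both local invariants inside $D_{2}$ vanish for every $I$, so that the supremum is the constant $D_{2}(0,0,B)$. Since $\mbox{dim}(A)=0$, each prime of $A$ is at once minimal and maximal; thus there is no proper chain in $\mbox{Spec}_{2}(A)$ below $I$, giving $ht_{2}(I)=0$, and in the formula of Proposition 3.2(3) the only $Q\subseteq I$ that survives is $Q=I$, so $\mbox{t.d.}_{2}(A_{I}:\mathbb{Z})=\mbox{t.d.}(k_{A}(I):k_{\mathbb{Z}}(2))$. Lemma 6.9(4) identifies $k_{A}(I)=\frac{A}{I}\cong\frac{\mathbb{Z}}{2\mathbb{Z}}=k_{\mathbb{Z}}(2)$, whence this transcendence degree is $0$. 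Therefore $\mbox{dim}(A\otimes_{\mathbb{Z}}B)=D_{2}(0,0,B)$.

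It then remains to unfold $D_{2}(0,0,B)$. By definition $D_{2}(0,0,B)=D(0,0,k_{\mathbb{Z}}(2)\otimes_{\mathbb{Z}}B)$, and Lemma 2.2(1) (with $R=\mathbb{Z}$, $p=2\mathbb{Z}$, whose associated multiplicative set $\frac{\mathbb{Z}}{2\mathbb{Z}}\setminus\{\overline{0}\}$ is trivial) gives $k_{\mathbb{Z}}(2)\otimes_{\mathbb{Z}}B\cong\frac{B}{2B}$. Finally I would evaluate $D$ in the degenerate case $s=d=0$: here $P[0]=P$ so $ht(P[0])=ht(P)$, while $\min(0,0+\mbox{t.d.}(\frac{C}{P}:k_{\mathbb{Z}}(2)))=0$ because transcendence degrees are nonnegative, so $D(0,0,C)=\mbox{sup}\{ht(P):P\in\mbox{Spec}(C)\}=\mbox{dim}(C)$ for any $k_{\mathbb{Z}}(2)$-algebra $C$. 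Taking $C=\frac{B}{2B}$ yields $\mbox{dim}(A\otimes_{\mathbb{Z}}B)=\mbox{dim}(\frac{B}{2B})$, as desired. The argument is mechanical once Corollary 6.8 applies; the only place the Boolean hypothesis is genuinely used is the collapse of the supremum, which rests entirely on Lemma 6.9(4) forcing every residue field to equal $\frac{\mathbb{Z}}{2\mathbb{Z}}$, so I anticipate no real obstacle beyond this bookkeeping.
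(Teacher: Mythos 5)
Your proof is correct and takes essentially the same route as the paper: both reduce, via the fibred AF-ring machinery (zero-dimensionality of $A$ gives the fibred AF property by Proposition 5.3, the Boolean residue fields force $\mbox{t.d.}_{2\mathbb{Z}}=0$), to the identity $\mbox{dim}(A\otimes_{\mathbb{Z}}B)=D_{2\mathbb{Z}}(0,0,B)=\mbox{dim}\big(\frac{B}{2B}\big)$. The only cosmetic difference is the entry point: the paper invokes Theorem 6.5 (the $\mbox{dim}_e^{A\otimes_RB}(A)=0$ case, where $ht_p(I)=0$ is already built in) together with Lemma 6.1, whereas you invoke the nonzero-characteristic Corollary 6.7 and collapse $ht_{2\mathbb{Z}}(I)$ and $\mbox{t.d.}_{2\mathbb{Z}}(A_I:\mathbb{Z})$ by hand from $\mbox{dim}(A)=0$ and Lemma 6.8 --- the underlying computations are identical.
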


\begin{proof} Using the proof of Corollary 6.7, we
get Spec$_e^A(\mathbb{Z})=\{2\mathbb{Z}\}$. Also, as dim$(A)=0$, by
Proposition 5.3, $A$ is a fibred AF-ring over $\mathbb{Z}$.
Moreover,
$$\mbox {t.d.}_{2\mathbb{Z}}(A:\mathbb{Z})=\mbox { sup}\Big \{\mbox {t.d.}\Big (\displaystyle {\frac
AI:\frac {\mathbb{Z}}{2\mathbb{Z}}}\Big ):I\in\mbox { Spec}(A)\Big
\}=0$$ as $\displaystyle {\frac AI}$ is algebraic over
$\displaystyle {\frac {\mathbb{Z}}{2\mathbb{Z}}}$, by Lemma 6.8(5).
It follows, by Theorem 6.5, Lemma 6.1 and Lemma 6.8, that
$$\begin{array}{lll}\mbox {dim}(A\otimes_{\mathbb{Z}}B)&=&=D_{2\mathbb{Z}}(0,0,B)\\
&=&\mbox { sup}\Big \{ht\Big (\displaystyle {\frac J{2B}}\Big
):J\in\mbox { Spec}_{2\mathbb{Z}}(B)\Big \}\\
&=&\mbox { dim}\Big (\displaystyle {\frac B{2B}}\Big
)\end{array}$$completing the proof.\end{proof}

\end{document}